\theoremstyle{plain}
\newtheorem{theorem}{Theorem}[section]
\crefname{theorem}{Theorem}{Theorems}
\Crefname{theorem}{Theorem}{Theorems}
\newtheorem*{lemma*}{Lemma}
\newtheorem{lemma}[theorem]{Lemma}
\crefname{lemma}{Lemma}{Lemmas}
\Crefname{lemma}{Lemma}{Lemmas}
\newtheorem*{claim*}{Claim}
\newtheorem{claim}[theorem]{Claim}
\crefname{claim}{Claim}{Claims}
\Crefname{claim}{Claim}{Claims}
\crefname{proposition}{Proposition}{Propositions}
\Crefname{proposition}{Proposition}{Propositions}
\newtheorem{corollary}[theorem]{Corollary}
\crefname{corollary}{Corollary}{Corollaries}
\Crefname{corollary}{Corollary}{Corollaries}
\newtheorem{conjecture}[theorem]{Conjecture}
\crefname{conjecture}{Conjecture}{Conjectures}
\Crefname{conjecture}{Conjecture}{Conjectures}
\crefname{question}{Question}{Questions}
\Crefname{question}{Question}{Questions}
\crefname{observation}{Observation}{Observations}
\Crefname{observation}{Observation}{Observations}
\crefname{example}{Example}{Examples}
\Crefname{example}{Example}{Examples}
\theoremstyle{definition}
\crefname{problem}{Problem}{Problems}
\Crefname{problem}{Problem}{Problems}
\newtheorem{definition}[theorem]{Definition}
\crefname{definition}{Definition}{Definitions}
\Crefname{definition}{Definition}{Definitions}
\xpatchcmd{\proof}{\itshape}{\normalfont\proofnamefont}{}{}
\newcommand{\proofnamefont}{}
\renewcommand{\proofnamefont}{\bfseries}
\newcommand{\remove}[1]{}
\newcommand{\floor}[1]{
    \lfloor #1 \rfloor
}
\newcommand{\del}{\delta}
\newcommand{\cL}{\mathcal{L}}
\newcommand{\M}{\mathcal{M}}
\newcommand{\F}{\mathcal{F}}
\newcommand{\T}{\mathcal{T}}
\DeclareMathOperator{\prob}{Prob}
\newcommand{\HH}{\mathcal{H}}
\newcommand{\A}{\mathcal{A}}
\newcommand{\B}{\mathcal{B}}
\newcommand{\C}{\mathcal{C}}
\newcommand{\D}{\mathcal{D}}
\newcommand{\G}{\mathcal{G}}
\newcommand{\K}{\mathcal{K}}
\newcommand{\Z}{\mathcal{Z}}
\newcommand{\I}{\mathcal{I}}
\newcommand{\NN}{\mathbb{N}}
\newcommand{\EE}{\mathbb{E}}
\newcommand{\rank}{\mathrm{r}}
\newcommand{\Pa}{\mathcal{P}}
\newcommand{\Sa}{\mathcal{S}}
\newcommand{\ve}{\varepsilon}
\newcommand{\wB}{\widetilde{\mathcal{B}}}
\newcommand{\W}{\mathcal{W}}
\newcommand{\sub}{\subseteq}
\providecommand{\keywords}[1]
{
  \small	
  \textbf{\textit{Keywords: }} #1
}
\providecommand{\subjectclass}[1]
{
  \small	
  \textbf{\textit{2020 MSC subject classification: }} #1
}
\title{Tree Posets: Supersaturation, Enumeration, and Randomness }
\author{Tao Jiang \footnote{Dept. of Mathematics, Miami University, Oxford, OH 45056, USA, {\tt jiangt@miamioh.edu}. Research supported
by the National Science Foundation grant DMS-1855542.  }\and Sean Longbrake \footnote{Dept. of Mathematics, Emory University,  Atlanta, GA 30322, USA {\tt sean.longbrake@emory.edu}} \and Sam Spiro\footnote{Dept.\ of Mathematics, Georgia State University {\tt sspiro@gsu.edu}. This material is based upon work supported by the National Science Foundation Mathematical Sciences Postdoctoral Research Fellowship under Grant No. DMS-2202730.} \and Liana Yepremyan\footnote{Dept.\ of Mathematics, Emory University {\tt lyeprem@emory.edu}.  Research is supported by the National Science Foundation grant 2247013: Forbidden and Colored Subgraphs.}}
\begin{document}

\maketitle
\begin{abstract}
We develop a powerful tool for embedding any tree poset $P$  of height $k$ in the Boolean lattice which allows us to solve several open problems in the area.  We show that:

\begin{itemize}
    \item If $\mathcal{F}$ is a family in $\mathcal{B}_n$ with $|\mathcal{F}|\ge (q-1+\varepsilon){n\choose \lfloor n/2\rfloor}$ for some $q\ge k$, then $\mathcal{F}$ contains on the order of as many induced copies of $P$ as is contained in the $q$ middle layers of the Boolean lattice. This generalizes results of Bukh \cite{Bukh} and Boehnlein and Jiang \cite{BJ} which guaranteed a single such copy in non-induced and induced settings respectively.
    \item The number of induced $P$-free families of $\mathcal{B}_n$ is $2^{(k-1+o(1)){n\choose \lfloor n/2\rfloor}}$, strengthening recent independent work of Balogh, Garcia, Wigal \cite{BGW} who obtained the same bounds in the non-induced setting.
    \item The largest induced $P$-free subset of a $p$-random subset of $\mathcal{B}_n$ for $p\gg n^{-1}$ has size at most $(k-1+o(1))p{n\choose \lfloor n/2\rfloor}$, generalizing previous work of Balogh, Mycroft, and Treglown \cite{BMT} and of Collares and Morris \cite{CM} for the case when $P$ is a chain.
\end{itemize}

All three results are asymptotically tight and give affirmative answers to general conjectures of Gerbner, Nagy, Patk\'os, and Vizer \cite{GNPV} in the case of tree posets.

\end{abstract}

\keywords{tree poset, chain, container lemma, enumeration, supersaturation.}

\subjectclass{05D05.}

\section{Introduction}

The celebrated Sperner's theorem \cite{Sperner} in extremal set theory determines the size of the largest family of sets in $[n]$ not containing
a $2$-chain $F_1\supset F_2$. Later, Erd\H{o}s \cite{Erdos}
 extended Sperner's theorem to determine the largest family not containing any $k$-chain and showed how Sperner's lemma can be used to solve the  classical Littlewood–Offord problem~\cite{LO}.   Afterwards,  Katona and Tarj\'an \cite{KT} initiated a systematic study of  the size of the largest family in the Boolean lattice $\B_n$ that avoids a given subposet.  This topic has attracted much attention and witnessed many advances in the last decades; we refer the interested reader to the nice survey paper by Griggs and Li \cite{GLi} for more.

 The question we are interested in this paper is one of \emph{supersaturation}, that is, how many copies of a poset are we guaranteed once we are above the threshold of containing one. The simplest poset to consider the supersaturation question for are $2$-chains. Erd\H{o}s and Katona conjectured that a family with $\binom{n}{\lfloor n/2 \rfloor}+t$ sets in $[n]$ must contain at least $t\cdot \lfloor \frac{n+1}{2}\rfloor$ many $2$-chains. This conjecture was confirmed by Kleitman~\cite{Kleitman}, who in fact showed that for every $0\leq a\leq 2^n$, every family in $[n]$ with size $a$ contains at least as many $2$-chains as the so-called centralized family of size $a$, i.e. a family of $a$ sets whose cardinalities are as close to $n/2+1/4$ as possible. Kleitman~\cite{Kleitman} further conjectured that the same families should also minimize the number of $k$-chains for every $k$. Five decades later, Kleitman's result was rediscovered by Das, Gan, and Sudakov \cite{DGS} and independently by Dove, Griggs, Kang, and Sereni \cite{DGKS}. Both papers further confirmed Kleitman's conjecture for every $k$ and $a$ belonging to a certain range above the sum
of the $k-1$ largest binomial coefficients. Subsequently, Balogh and Wagner \cite{BW} proved Kleitman's conjecture for all $k$ and
$a\leq (1-\ve)2^n$, provided that $n$ is sufficiently large with respect to $k$ and $\ve$. Finally, in a remarkable paper,  Samotij~\cite{Samotij} resolved Kleitman's conjecture in full. 
There have been further generalizations of  the supersaturation problem for $2$-chains to more general hosts other than the Boolean lattice such as the collection of subspaces of $\mathbb{F}_q^n$ ordered by set inclusion \cite{NSS} or $\{0,1,2, \dots r\}^n$ \cite{BW,NSS}.  The latter problem gives rise to a natural generalization of Kleitman's problem,  and while weaker approximate results do hold for  $r\geq 2$ as shown by Noel, Sudakov and Scott~\cite{NSS},  the exact analogue of Kleitman's conjecture (see ~\cite{BW,NSS}) fails as shown by Balogh, Petříčková, and Wagner~\cite{BPW}.

Our main goal is to establish similar supersaturation results for more general family of  posets beyond 2-chains, and in particular we do this for so-called \emph{tree posets}, which are posets whose Hasse diagram is a tree. To state our results, we need a short prelude on the precise definition of extremal numbers for posets, since below this extremal number we can not guarantee any copies of our poset.   

 Let P, Q be two finite posets, that is, they are finite sets equipped with partial orders $<_{P}$ and $<_Q$. A {\it poset homomorphism} is a function $f: P\to Q$ such that $f(x) <_Q f(y)$
whenever $x <_P y$. An {\it induced poset homomorphism} is a function $f: P\to Q$ such that $f(x) <_Q f(y)$ if and only if $x <_P y$. We say that a poset $Q$ {\it contains} another poset $P$ if there is an injective poset homomorphism from $P$ to $Q$. We say that a poset $Q$ contains an induced copy of another poset $P$ if there is an injective induced poset homomorphism from $P$ to $Q$.  If a poset $Q$ does not contain a copy of another poset $P$, we say that $Q$ is {\it $P$-free}. If $Q$ does not contain an induced copy of $P$, we say that $Q$ is  {\it induced $P$-free}. Given a poset $P$ and an integer $n$, we define $La(n,P)$ to be the largest size of a $P$-free subfamily of $\B_n$  and $La^*(n,P)$ the largest size of an induced $P$-free subfamily of $\B_n$. Motivated by a number of early results in the study of $La(n,P)$ (see for example \cite{DK, DKS, GK, Th}), Griggs and Lu \cite{GL} and independently Bukh \cite{Bukh}  made the following conjecture on the form of $La(n,P)$. 

\begin{conjecture} [Bukh\cite{Bukh}, Griggs-Lu \cite{GL}] \label{poset-turan-conjecture}
Let $P$ be a poset. Then
\[La(n,P)=(1+o(1))e(P)\binom{n}{\lfloor n/2 \rfloor},\]
where $e(P)$ denotes the largest integer $\ell$ such that for all $j$ and $n$ the family $\bigcup_{i=1}^\ell \binom{[n]}{i+j}$
is $P$-free.
\end{conjecture}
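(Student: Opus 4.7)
The lower bound in \Cref{poset-turan-conjecture} is immediate from the definition of $e(P)$: choosing $j^{*}=\lfloor (n-e(P))/2\rfloor$ and taking $\F^{*}=\bigcup_{i=1}^{e(P)}\binom{[n]}{i+j^{*}}$ yields a $P$-free family of size $(e(P)-o(1))\binom{n}{\lfloor n/2\rfloor}$, since each of the $e(P)$ central layers has that asymptotic size. The content of the conjecture lies in the matching upper bound, which I would attack through the random maximal chain / LYM framework. For a uniformly random maximal chain $C$ in $\B_n$, a standard double counting gives
\[\EE\bigl[\,|\F\cap C|\,\bigr]=\sum_{A\in\F}\frac{1}{\binom{n}{|A|}}\ \geq\ \frac{|\F|}{\binom{n}{\lfloor n/2\rfloor}},\]
so it would suffice to show that every $P$-free family $\F\subseteq\B_n$ satisfies $\EE[|\F\cap C|]\le e(P)+o(1)$.

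The heart of the plan is a \emph{chain-embedding lemma}: whenever the random chain $C$ meets $\F$ in noticeably more than $e(P)$ sets, concentrated in a short window of levels, one can embed $P$ into $\F$ by extending the spine $\F\cap C$ with a small number of additional random choices. For tree posets this is essentially the approach of Bukh \cite{Bukh}, and is also the engine behind the embedding tool of the present paper: because the Hasse diagram of a tree poset is a tree, one processes its vertices in a rooted order, extending the partial embedding at each step by a single new random element lying just above or just below the current image, and the success probability is controlled by a union bound over the tree. If one can guarantee this embedding succeeds with positive probability whenever $\EE[|\F\cap C|]>e(P)+\ve$, the LYM inequality above yields the Tur\'an-type upper bound, and the three stronger conclusions of the paper (supersaturation, enumeration via container-type arguments, and random-host sparsifications) would propagate from this bound by the standard machinery.

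\textbf{Main obstacle.} The step I expect to be genuinely hard is the chain-embedding lemma \emph{outside} the tree-poset regime. For a poset such as the diamond, any embedded copy must include incomparable sets, and a single random chain cannot supply an incomparable pair; one is therefore forced to randomise over configurations of chains that split and rejoin, and then control their joint intersection with $\F$ finely enough to recover the \emph{exact} constant $e(P)$. Since $e(P)$ in general depends subtly on the interplay of the height of $P$ with its incomparability structure (for instance $e(D_2)=2$ even though the diamond has height $3$), no purely local extension of Bukh's argument can detect $e(P)$ automatically: a global stability step forcing a near-extremal $\F$ to concentrate on a consecutive window of levels seems unavoidable. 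The case $P=D_2$ of \Cref{poset-turan-conjecture} is itself a well-known open problem, so beyond recovering the tree-poset regime already handled in this paper, a genuinely new ingredient — most plausibly a poset-container or stability tool yielding a structural description of families of size close to $e(P)\binom{n}{\lfloor n/2\rfloor}$ — would be required before the chain-embedding strategy can be made to work in full generality.
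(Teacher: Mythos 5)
This statement is a \emph{conjecture} in the paper, not a theorem: the paper offers no proof of it, and indeed explicitly records that it is \textbf{false} in general. As cited in the introduction, Ellis, Ivan, and Leader \cite{EIL} constructed, for $P$ equal to the $d$-dimensional Boolean lattice with $d\ge 4$, families showing that $La(n,P)$ exceeds $(e(P)+c)\binom{n}{\lfloor n/2\rfloor}$ for some constant $c>0$, so no correct proof of \Cref{poset-turan-conjecture} as stated can exist. Your proposal treats the statement as an open problem to be attacked rather than a refuted one; this is the central omission. Any strategy whose endpoint is the upper bound $\EE[|\F\cap C|]\le e(P)+o(1)$ for all $P$-free $\F$ is doomed from the start, because the Ellis--Ivan--Leader families are $P$-free yet have Lubell weight bounded away from $e(P)$ from above.

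On the parts of your sketch that do correspond to known mathematics: the lower bound via the $e(P)$ middle layers is correct and standard, and your description of the chain-embedding strategy for tree posets is a reasonable account of what Bukh \cite{Bukh} did and of what the present paper refines (via $q$-marked chains, the robustness/cleaning step of Theorem~\ref{thm:cleaning}, and the iterative embedding of Theorem~\ref{thm:genl}). You are also right that the diamond $D_2$ remains open and that incomparability forces one off a single chain. But the honest conclusion is not merely that ``a genuinely new ingredient would be required''; it is that the conjecture must first be repaired (e.g.\ restricted to a subclass of posets excluding high-dimensional cubes, or reformulated with a different extremal function) before any proof can be sought. The paper's actual contributions prove the tree-poset case in strengthened forms (supersaturation, counting, random host), and for that restricted class your sketch is broadly aligned with the paper's approach, though it omits the key technical device that makes the supersaturation count work, namely the nested sequence of marked-chain families with the $\delta$-robustness property.
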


A similar conjecture for the induced case as well as a supersaturation version of both results were stated in Gerbner, Nagy, Patk\'os, and Vizer (see Conjecture~1 and Conjecture~3 ~\cite{GNPV}). An approximate version of Conjecture~\ref{poset-turan-conjecture} was proven by  Methuku and P\'alv\"olgyi \cite{MP} who showed that for every poset $P$, there exists a constant $C_P$ such that $La^*(n,P)\leq C_P\binom{n}{\lfloor n/2\rfloor}$. The value of $C_P$  was later improved by Tomon~\cite{Tomon}. However, this conjecture as well as the companion ones from~\cite{GNPV} were proven to be false for $P$ the $d$-dimensional Boolean lattice with $d\geq 4$ due to a recent beautiful construction of Ellis, Ivan, and Leader \cite{EIL}.

Nevertheless there are still a number of families of $P$ for which \Cref{poset-turan-conjecture} remains true (see \cite{GLi} for more), the most general family perhaps being that of tree posets as established by Bukh~\cite{Bukh}.

\begin{theorem}[Bukh \cite{Bukh}]
Let $P$ be a tree poset of height $k$. Then
\[La(n,P)=\left(k-1+O\left(\frac{1}{n}\right)\right)\binom{n}{\lfloor n/2\rfloor}.\]
\end{theorem}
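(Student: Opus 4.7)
The plan is to prove the two bounds separately.

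\emph{Lower bound.} Take $\F$ to be the union of the $k-1$ middle layers of $\B_n$. Any copy of $P$ contains a chain of length $k$ (since $P$ has height $k$), while no family supported on $k-1$ layers of $\B_n$ admits a chain of length $k$; hence $\F$ is $P$-free. Standard binomial estimates give $|\F| \geq (k-1-O(k^2/n))\binom{n}{\lfloor n/2\rfloor}$, which for fixed $k$ matches the claimed lower bound.

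\emph{Upper bound.} Suppose $|\F| \geq (k-1+\eps)\binom{n}{\lfloor n/2\rfloor}$ for some fixed $\eps>0$. The core ingredient is the LYM / random chain inequality: for a uniformly random maximal chain $C$ in $\B_n$,
\[
\EE[|\F \cap C|] \;=\; \sum_{A \in \F}\binom{n}{|A|}^{-1} \;\geq\; \frac{|\F|}{\binom{n}{\lfloor n/2\rfloor}} \;\geq\; k-1+\eps.
\]
In particular, a positive fraction of maximal chains meet $\F$ in at least $k$ sets, giving an embedding of the longest chain of $P$ (call this the \emph{spine}) into $\F$. To extend to all of $P$, I would root the Hasse diagram of $P$ at an endpoint of the spine and process the remaining vertices in BFS order. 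For each new vertex $u$ with Hasse-diagram neighbor $w$ already embedded, the task is to find $\phi(u) \in \F$ that is related to $\phi(w)$ on the correct side, then grow an $\F$-chain through $\phi(w)$ to accommodate the subtree hanging off $u$. Crucially, because $P$ is a \emph{tree} poset, the comparability of $u$ with every previously embedded vertex is determined by the unique tree path, so once each such subtree is embedded along a chain on the correct side of its attachment point, the incomparabilities in $P$ are automatically respected.

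The main obstacle is verifying that these local extensions succeed simultaneously across all BFS stages. The plan is a ``robust'' LYM argument: for a typical $A \in \F$, I want the sub-family $\F^+_A := \{B \in \F : B \supset A\}$ (or its dual $\F^-_A$) to satisfy a LYM-type density bound relative to the sub-lattice above $A$, so that we can recursively embed the subtree hanging off $A$. This should follow from a pruning step that deletes those $A \in \F$ whose comparability cone $\F^\pm_A$ is too sparse: such $A$ contribute only a small fraction of $\F$, so the remaining sub-family still has density above $(k-1+\eps/2)\binom{n}{\lfloor n/2\rfloor}$, and induction on the height $k$ (or on $|V(P)|$) handles the subtree. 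The bookkeeping that maintains enough slack at every BFS stage, while keeping $n$ only polynomial in $|V(P)|$ and $1/\eps$, is the technical heart of the argument; this is exactly where the $O(1/n)$ error term in the statement arises.
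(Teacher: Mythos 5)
This theorem is quoted from Bukh and is not reproved in the paper, so the relevant comparison is with Bukh's marked-chain argument and with the machinery (\Cref{thm:weakCleaning}, \Cref{thm:cleaning}, \Cref{thm:genl}) that this paper builds to generalize it. Your lower bound is correct and standard. Your upper-bound skeleton --- LYM to produce many $k$-chains of $\F$ on full chains, then a greedy embedding of the Hasse tree in which each new vertex is placed on the correct side of its already-embedded neighbour --- is indeed the right strategy and is the one the paper formalizes. Two side remarks: for the non-induced quantity $La(n,P)$ you do not need to preserve incomparabilities at all (only an injective poset homomorphism is required), and your claim that incomparabilities are ``automatically respected'' when subtrees are embedded along chains is false in general (two incomparable elements of $P$ hanging off the same attachment point can easily be mapped to comparable sets); this issue only becomes relevant for the induced version of Boehnlein--Jiang.

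The genuine gap is the step you defer to ``a pruning step that deletes those $A$ whose comparability cone is too sparse.'' This is not bookkeeping; it is the entire content of Bukh's key lemma and of this paper's cleaning theorem. First, the right notion of ``sparse'' is not the LYM density of $\F^{+}_A$ in the sublattice above $A$: what the embedding actually needs is that no small-measure witness family $\W$ below (or above) $A$ can block \emph{all} $k$-chains of $\F$ through $A$ in which $A$ sits at a prescribed position --- this is the $(i,\delta)$-lower/upper badness of the paper, measured by the probability that a random full chain through $A$ meets $\W$. Second, the assertion that the bad $A$ contribute only a small fraction of $\F$ requires a nontrivial encoding argument (the ``bad profile'' vectors and the $\delta^m$ chain-counting of \Cref{numchains}); it does not follow from a one-line double count. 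Third, deleting bad elements changes which remaining elements are bad, so the pruning must be iterated, and the paper explicitly notes that one cannot obtain a single cleaned family at all --- only a nested sequence $\M^{|P|}\subseteq\cdots\subseteq\M^0$, with the $j$th embedded vertex drawn from the $j$th family. Your proposed induction on height has no base until this is carried out. Finally, to obtain the stated $O(1/n)$ error term rather than $(k-1+\varepsilon)$ for each fixed $\varepsilon$, the whole argument must be run with $\varepsilon=C_P/n$ and every loss tracked at scale $\frac{1}{n}\binom{n}{\lfloor n/2\rfloor}$; your sketch does not address this quantitative dependence.
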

Bukh's result was later extended to the induced setting by Boehnlein and Jiang \cite{BJ}, who
showed that $La^*(n,P)=(k-1+O(\frac{\sqrt{n\log n}}{n}))\binom{n}{\lfloor n/2\rfloor}$ for any tree poset of height $k$. Our main result is the following supersaturation extension of these
results.

\begin{theorem}\label{thm:main}
Let $k$ be a fixed positive integer and $P$ a tree poset of height $k$.
Then for any real $\ve>0$ and integer $q\geq k$, there exists a real $\delta>0$
such that every family
$\F\subseteq \B_n$ with $|\F|\geq (q-1+\ve)\binom{n}{\lfloor n/2\rfloor}$ contains at least $\delta\cdot M^*(n,q,P)$
induced copies of $P$ where $M^*(n,q,P)$ denotes the number of induced copies of $P$ in the $q$ middle levels of $\B_n$.
\end{theorem}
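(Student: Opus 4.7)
The plan is to combine Bukh's chain-averaging technique with a quantitative random tree-embedding scheme in the spirit of Boehnlein and Jiang, pushed hard enough to produce many induced copies rather than just one. The first step is to locate many \emph{rich} maximal chains in $\mathcal{B}_n$, meaning maximal chains containing at least $q$ elements of $\mathcal{F}$. Identifying maximal chains with permutations of $[n]$, a uniformly random chain contains each $F\in\mathcal{F}$ with probability $\binom{n}{|F|}^{-1}\leq\binom{n}{\lfloor n/2\rfloor}^{-1}$, so in expectation a random chain meets $\mathcal{F}$ in at least $|\mathcal{F}|/\binom{n}{\lfloor n/2\rfloor}\geq q-1+\varepsilon$ sets. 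Since a chain has length $n+1$, a Markov-type inequality yields a positive-density sub-collection of rich chains, each meeting $\mathcal{F}$ on $q\geq k$ distinct levels, and thereby providing a built-in spine onto which a longest chain of $P$ can be laid.

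For each rich chain $C$, I would embed the rest of $P$ using random branching extensions. Root $P$ at a vertex $r_0$ on a longest chain, enumerate vertices in BFS order, and embed the spine chain of height $k$ into any $k$-subset of $C\cap\mathcal{F}$. For each subsequent vertex $v$ whose parent $u$ is already embedded, generate $\phi(v)$ by choosing a uniformly random maximal chain that agrees with $C$ through $\phi(u)$ and reading its set at the correct level above or below $\phi(u)$. A LYM-type computation inside the subcube above or below $\phi(u)$ shows that on average across rich chains, a constant fraction of these extensions lie in $\mathcal{F}$---and this is precisely where the $q-k$ layers of slack in the hypothesis are spent. The crucial subtlety is that the resulting map must be \emph{induced}: for any pair of incomparable vertices $v,w$ in $P$, the images $\phi(v),\phi(w)$ must be incomparable in $\mathcal{B}_n$. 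An accidental comparability would require the independent random extensions to coincide on many coordinates, an event of probability $O(1/n)$, so a union bound over the $O(|P|^2)$ incomparable pairs of $P$ leaves a positive probability of producing a genuine induced copy.

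To convert this scheme into a count of induced copies, I would double-count pairs consisting of an induced copy of $P$ in $\mathcal{F}$ and a compatible (rich chain, branching extensions)-tuple producing it. The number of successful tuples is at least the number of rich chains times the number of valid spine-and-branching choices per chain, while each induced copy of $P$ is produced by only boundedly many tuples, after accounting for the symmetries coming from how a chain can continue past the image. A direct comparison with $M^*(n,q,P)$, which is generated by essentially the same chain-and-extension process when $\mathcal{F}$ is replaced by the $q$ middle layers, then yields the promised lower bound $\delta\cdot M^*(n,q,P)$.

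The main obstacle is the embedding step: one must arrange the random branching so that it simultaneously (a) lands in $\mathcal{F}$ with positive probability at every level, (b) produces an induced rather than merely order-preserving embedding, and (c) does so with success probability and per-chain extension count that do not decay with $n$, so that the final count matches $M^*(n,q,P)$ up to a constant rather than merely guaranteeing one copy. This is precisely where a naive Markov-plus-union-bound is insufficient, and where the "powerful tool for embedding any tree poset" announced in the abstract is presumably deployed to coordinate the branching across all vertices of $P$ at once.
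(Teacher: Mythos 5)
Your overall architecture --- locate rich chains via a chain-averaging/LYM argument, lay a spine of $P$ on a rich chain, grow the remaining vertices of the tree one at a time from their parents, and finally compare the resulting count to $M^*(n,q,P)$ --- matches the paper's strategy. But the proposal has a genuine gap at exactly the step you flag yourself at the end: the claim that ``a LYM-type computation inside the subcube above or below $\phi(u)$ shows that on average across rich chains, a constant fraction of these extensions lie in $\mathcal{F}$.'' This is not true as stated and is the entire difficulty of the theorem. The hypothesis controls only the global size (equivalently, Lubell weight) of $\mathcal{F}$; it says nothing about the local density of $\mathcal{F}$ in the subcube above or below a \emph{particular} already-embedded element $\phi(u)$. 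A family can have almost all of its rich chains pass, above each of its members, through a tiny ``bottleneck'' collection of sets, in which case the uniform random extension from $\phi(u)$ lands in $\mathcal{F}$ at the right level with probability $o(1)$, and no averaging over rich chains rescues the conditional statement you need once $\phi(u)$ is fixed. Worse, the resource you propose to spend --- ``the $q-k$ layers of slack'' --- does not exist in the case $q=k$, which the theorem covers; the only slack available is the $\ve$ in the size hypothesis. The paper's response to this obstacle is its main technical device (Theorem~\ref{thm:cleaning}): one defines an element $F$ to be $(i,\delta)$-bad if all marked chains through $F$ in position $i$ can be blocked by a family of relative Lubell weight at most $\delta$ below (or above) $F$, and one iteratively deletes bad elements, producing a nested sequence $\M^{|P|}\subseteq\cdots\subseteq\M^{0}$ of marked-chain families, each robust with respect to its predecessor, while showing via an encoding/union-bound argument (Lemmas~\ref{lem:function}--\ref{upweight}) that only an $O(\delta)$-fraction of the chain mass is lost. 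Nothing in your proposal substitutes for this; without it the greedy embedding can stall at the second vertex.

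Two smaller points. First, your induced-ness bound is off: the probability that a random extension from $\phi(u)$ becomes comparable to a previously embedded incomparable image is $O(\sqrt{n\ln n}/n)$ (Lemmas~\ref{downset}--\ref{upset}), not $O(1/n)$; this is still $o(1)$, but the correct way to use it is to delete the forbidden neighborhood from the choice set at each step (which again requires already knowing the choice set has size $\Omega(n^{|\rank(x)-\rank(y)|})$, i.e.\ the robustness above), not a union bound at the end over independent randomness. Second, in your double count, a fixed induced copy of $P$ is \emph{not} produced by boundedly many (chain, extension)-tuples --- the number of full chains through a fixed $q$-chain is a large product of factorials --- so the normalization must be done per level via the $n!/\binom{n}{|F|}$ chain counts (as in Lemma~\ref{lem:translatingWeight}) and the comparison with $M^*(n,q,P)$ must be routed through the poset homomorphisms $\rank\colon P\to[q]$ (Lemma~\ref{lem:rankandmiddle}) rather than through a direct bijective-style correspondence.
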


This result answers \cite[Conjecture 3]{GNPV} in a strong form for tree posets which stated their conjecture only in the special case of $q=k$.  In fact, an analog of \Cref{thm:main} holds if we replace $|\mathcal{F}|$  with $\mathcal{F}$'s Lubell weight, see Theorem~\ref{prop:embedding} for the precise statement. It is worth noting that we do not know of an explicit formula for $M^*(n,q,P)$ in general, but this does not end up being a significant barrier to the proof (see \Cref{sec:sketch} for more details). However, as a corollary to \Cref{thm:main}, we can get the following explicit result which is tight for \emph{saturated tree posets}, i.e. those for which every maximal chain has the same length.
\begin{corollary}\label{cor:supersat}
Let $k$ be a fixed positive integer and $P$ a tree poset of height $k$.
For any real $\ve>0$, there exists a real $\delta>0$ such that every subfamily
$\F\subseteq \B_n$ with $|\F|\geq (k-1+\ve)\binom{n}{\lfloor n/2 \rfloor}$ contains at least 
$\delta n^{|P|-1}\binom{n}{\lfloor n/2 \rfloor}$ induced copies of $P$.
\end{corollary}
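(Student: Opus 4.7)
The plan is to deduce this corollary directly from Theorem~\ref{thm:main} applied with $q = k$, which supplies a $\delta_0 = \delta_0(P,\ve) > 0$ such that $\F$ contains at least $\delta_0 \cdot M^*(n,k,P)$ induced copies of $P$. It then suffices to establish a lower bound of the form
\[
M^*(n,k,P) \;\geq\; c\, n^{|P|-1} \binom{n}{\lfloor n/2\rfloor}
\]
for some constant $c = c(P) > 0$; the corollary then follows with $\delta := c\delta_0$.

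To lower bound $M^*(n,k,P)$ I would first fix a layering $\ell\colon P \to \{1,\dots,k\}$ consistent with $<_P$ --- for concreteness, $\ell(p) := h(p)$, the length of the longest chain of $P$ with top element $p$ --- and set $L_i := \lfloor n/2\rfloor - \lceil k/2\rceil + i$ as the target level for elements with $\ell(p) = i$. The Hasse diagram of $P$ is a tree $T$, which I root at an arbitrary element $r$ and process in BFS order to build an embedding $f\colon P \to \B_n$ with $|f(p)| = L_{\ell(p)}$. There are $\binom{n}{L_{\ell(r)}} = \Theta\bigl(\binom{n}{\lfloor n/2\rfloor}\bigr)$ choices for $f(r)$. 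Each subsequent $v \in T$ has a unique previously-placed tree-neighbor $u$, and the required containment $f(v) \supsetneq f(u)$ or $f(v) \subsetneq f(u)$ forces $f(v)$ to be obtained from $f(u)$ by inserting or deleting $|\ell(v) - \ell(u)| \geq 1$ elements. Since $|f(u)|$ and $n - |f(u)|$ are both $\Theta(n)$, each such step admits $\Omega(n)$ choices. Multiplying over the $|P|-1$ edges of $T$ yields at least $c_1 n^{|P|-1} \binom{n}{\lfloor n/2\rfloor}$ tuples $(f(p))_{p \in P}$ satisfying all required Hasse-edge inclusions.

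The remaining task is to verify that a positive proportion of the tuples constructed above are in fact \emph{induced} embeddings of $P$, i.e.\ that no pair $u, v \in P$ incomparable in $P$ is mapped to a comparable or coincident pair in $\B_n$. For each offending pair $(u,v)$ one can re-run the BFS enumeration while enforcing the extra relation $f(u) \subseteq f(v)$ or $f(v) \subseteq f(u)$, which at some step of the BFS contracts a factor of $\Omega(n)$ down to $O(1)$ choices. The resulting count of bad tuples is $O(1/n)$ times the total, and a union bound over the $O(|P|^2)$ incomparable pairs shows that for $n$ large at least half of the constructed tuples correspond to genuine induced copies. The main subtlety is precisely in this last step: keeping track of the interplay between the BFS order and the spurious containments requires careful inductive bookkeeping, but it reduces to a standard counting estimate costing at most a factor of $O(1/n)$ per extra constraint.
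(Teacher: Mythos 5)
Your argument is correct, but it takes a genuinely different route from the paper. The paper never lower-bounds $M^*(n,k,P)$ at all: it obtains \Cref{cor:supersat} directly from the embedding machinery (\Cref{prop:embedding} with $\ell=1$, or equivalently as a special case of the balanced supersaturation \Cref{thm:balanced}), where the count of copies produced is $\bigl(\tfrac{\gamma}{2}\bigr)^{|P|-1}\prod_{xy\in H(P)}n^{|\rank(y)-\rank(x)|}\cdot\Omega\bigl(\binom{n}{n/2}\bigr)$, and one simply observes that each exponent $|\rank(y)-\rank(x)|$ is at least $1$, so the product over the $|P|-1$ Hasse edges is at least $n^{|P|-1}$. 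You instead treat \Cref{thm:main} as a black box and supply the missing ingredient $M^*(n,k,P)=\Omega\bigl(n^{|P|-1}\binom{n}{\lfloor n/2\rfloor}\bigr)$ by a direct greedy enumeration in $k$ consecutive middle levels; note the paper only ever proves the \emph{upper} bound on $M^*$ (\Cref{lem:rankandmiddle}), so your lemma is genuinely new relative to the text, and it is a nice complement showing the two bounds match up to constants for $q=k$. Your route is valid and arguably more self-contained as a deduction from the stated \Cref{thm:main}; the paper's route is shorter because the exponent count is already built into \Cref{prop:embedding}. One caution on your last step: for a fixed incomparable pair $(u,v)$, imposing $f(u)\subseteq f(v)$ does \emph{not} always lose a factor of $n$ at the step where $v$ is placed --- if the already-placed tree-parent $w$ of $v$ happens to satisfy $f(u)\subseteq f(w)$, the extra constraint is vacuous at that step. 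The standard repair (which your ``inductive bookkeeping'' must amount to) is to classify bad tuples by the \emph{first} BFS step at which a spurious comparability appears; conditioning on no earlier spurious comparability forces $|f(u)\setminus f(w)|\ge 1$ (or collapses the step to $O(1)$ choices in the downward case), and then the $O(1/n)$ loss per offending pair and the union bound go through as you claim.
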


A slight modification of our approach can extend \Cref{cor:supersat} into a \textit{balanced supersaturation} result, \Cref{thm:balanced}, which roughly speaking says that we can guarantee our collection of copies of $P$ to be such that  no subset of $\F$ is contained in too many induced copies of $P$ in this collection. Balanced supersaturation  while interesting on its own is usually used in combination with the  container method to establish counting results. The container method originated in papers by Kleitman and Winston~\cite{KW1, KW2} in early 1980s and was further  independently developed by Balogh, Morris, and Samotij~\cite{BMS} and Saxton and Thomason~\cite{ST}, and has had a tremendous impact on combinatorics since then.  Containers have been widely used to establish counting results and random Tur\'an-type  in various settings  such as for $H$-free graphs or hypergraphs for a fixed graph $H$, AP-free sets in additive combinatorics and so on. See the excellent survey by Balogh, Morris and Samotij~\cite{survey-container} for an extensive overview on the method. 

In the setting of posets, containers have been most widely used in counting subsets of $\B_n$ avoiding $2$-chains, which is equivalent to the problem of counting antichains. When the host is the Boolean lattice, this problem was  solved independently by Balogh, Treglown, and Wagner~\cite{BTW} and by Noel, Sudakov, and Scott~\cite{NSS}. The second group~\cite{NSS} also generalized this to hosts being subspaces of $\mathbb{F}_n^q$ ordered by inclusion, and sets of divisors of a square of a square free integer. Note that the latter is equivalent to studying $\{0, 1, 2\}^n$ under the natural ordering. In the more general setting of counting antichains in $[t]^n$, which turns out to be connected to a Ramsey-theoretic question in ordered hypergraphs~\cite{MS}, there has been much recent progress, most notably by Pohoata and Zakharov~\cite{PZ}, Park, Sarantis, and Tetali~\cite{PST} and finally, by Falgrav-Ravry, R\"avy, and Tomon~\cite{FRT}.

Our main counting result gives tight bounds for  the number of induced $P$-free families of $\B_n$ for all tree posets $P$, solving Conjecture 4 from \cite{GNPV} in this case.   This extends previous work of Patk\'os and Treglown~\cite{PT} and Gerbner, Nagy, Patk\'os, and Vizer \cite{GNPV} who obtained similar results for special subclasses of tree posets. Our result also strengthens the recent work of Balogh, Garcia, and Wigal~\cite{BGW} who independently obtained the same counting result but in the non-induced setting. 

\begin{theorem}\label{thm:counting}
    If $P$ is a tree poset of height $k$, then the number of induced $P$-free sets in $\B_n$ is at most 
    $$2^{(k - 1 + o(1))\binom{n}{n / 2}}.$$
\end{theorem}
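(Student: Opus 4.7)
The plan is to deduce the counting bound from a balanced supersaturation statement via the hypergraph container method of Balogh--Morris--Samotij and Saxton--Thomason. I would consider the $|P|$-uniform hypergraph $\mathcal{H}$ on vertex set $\mathcal{B}_n$ whose edges are the vertex sets of induced copies of $P$ inside $\mathcal{B}_n$; induced $P$-free families of $\mathcal{B}_n$ are then exactly the independent sets of $\mathcal{H}$, so it suffices to count these. To invoke a container theorem I need (i) a supersaturation estimate guaranteeing many edges of $\mathcal{H}$ inside every family of size at least $(k-1+\varepsilon)\binom{n}{\lfloor n/2 \rfloor}$, and (ii) matching co-degree bounds showing that for each $2 \leq j \leq |P|$, no $j$-subset of $\mathcal{B}_n$ lies in too many induced copies of $P$. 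Item (i) is precisely Corollary \ref{cor:supersat}, which yields of order $n^{|P|-1}\binom{n}{\lfloor n/2 \rfloor}$ induced copies. Item (ii) is the substance of the balanced supersaturation result Theorem \ref{thm:balanced} flagged in the introduction: the guaranteed copies can be produced so that the number through any $j$-subset is down by roughly a factor of $n^{j-1}$ from the total count.

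Feeding these estimates into the Saxton--Thomason container theorem should produce a family $\mathcal{C}$ of ``containers'' $C \subseteq \mathcal{B}_n$ such that every independent set of $\mathcal{H}$ is contained in some $C \in \mathcal{C}$, each container satisfies $|C| \leq (k-1+o(1))\binom{n}{\lfloor n/2 \rfloor}$, and $\log_2 |\mathcal{C}| = o\!\left(\binom{n}{\lfloor n/2 \rfloor}\right)$. The theorem then follows from the union bound
\begin{equation*}
\#\{\text{induced } P\text{-free families}\} \;\leq\; \sum_{C \in \mathcal{C}} 2^{|C|} \;\leq\; |\mathcal{C}| \cdot 2^{(k-1+o(1))\binom{n}{\lfloor n/2 \rfloor}} \;=\; 2^{(k-1+o(1))\binom{n}{\lfloor n/2 \rfloor}},
\end{equation*}
which is exactly the claimed bound.

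The main obstacle is establishing the balanced supersaturation estimate with co-degree control at every level $j$. Corollary \ref{cor:supersat} alone only bounds the total edge count of the induced subhypergraph $\mathcal{H}[\mathcal{F}]$ and says nothing about how concentrated these copies can be; without a co-degree bound the container method outputs containers that are too large and the counting bound degrades. The natural route is to revisit the embedding lemma promised in the introduction (Theorem~\ref{prop:embedding}) and verify that most induced copies of $P$ in a large $\mathcal{F}$ can be constructed greedily, processing the vertices of the tree $P$ one at a time, with many viable extensions available at each step. This should translate directly into the desired co-degree bound, since fixing a $j$-element partial embedding kills only $j$ of the greedy choices, each of which would otherwise contribute a factor of order $n$. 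Once this balanced version is in hand, plugging into Saxton--Thomason is routine.
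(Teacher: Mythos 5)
Your strategy coincides with the paper's: identify induced $P$-free families with independent sets of the $|P|$-uniform hypergraph $\G_P$ on $\B_n$, prove balanced supersaturation (\Cref{thm:balanced}) via the greedy tree-embedding machinery, and feed this into the hypergraph container lemma, finishing with a union bound over containers of size $(k-1+o(1))\binom{n}{n/2}$. The one step you dismiss as ``routine'' is, however, where a genuine gap sits. To shrink containers down to $(k-1+\ve)\binom{n}{n/2}$ one must iterate the container lemma, starting from $\F_1=\B_n$, and in the early rounds the current container $\F$ has size far larger than $\binom{n}{n/2}$ (up to $2^n$). For such $\F$, the balanced supersaturation bounds you propose (edge count of order $n^{|P|-1}\binom{n}{n/2}$, $j$-codegrees of order $(\delta n)^{|P|-j}$) do \emph{not} verify the hypothesis $\Delta_b(\HH)\le c\,\tau^{b-1}|\HH|/N$ of \Cref{container} with a constant $c$ and $\tau=1/n$: since $N=|\F|$ appears in the denominator, one would need $c\gtrsim |\F|/\binom{n}{n/2}$, which is $\Theta(\sqrt n)$ or worse when $|\F|$ approaches $2^n$, and the container lemma's $\delta$ degenerates as $c\to\infty$. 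The paper's fix (\Cref{cor:impt}) is not bookkeeping but a separate idea: partition a large $\F$ into blocks of size $3k\binom{n}{n/2}$, pass within each block to a $3$-gapped subfamily (sets whose cardinalities agree mod $3$), and apply \Cref{thm:balanced} with $\ell=3$, which inflates the edge count per block to $\delta^{|P|}n^{3(|P|-1)}\binom{n}{n/2}$ while the codegrees become $(\delta n^3)^{|P|-j}$; taking the disjoint union over blocks then satisfies the container hypothesis with constant $c$ and $\tau=n^{-3}$, and this smaller $\tau$ also keeps the fingerprints from the large-family rounds negligible in the final count (\Cref{thm:major}). Your proof needs this (or an equivalent device) to go through; with it, the rest of your outline matches the paper's argument.
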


With a similar approach, we also obtain tight bounds for the largest size of an induced $P$-free family of a random subset of $\B_n$ for all tree posets $P$.
Let $\Pa(n,  p)$ be the uniformly random subset of $\B_n$,  where each set survives with probability $p$ such that $pn \rightarrow \infty$. This model $\Pa(n,  p)$ was first investigated by Renyi~\cite{R}, after which Kohayakawa and Kreuter~\cite{KK} studied the size of the largest $2$-chain-free subset of $\Pa(n, p)$.  Their results were subsequently improved by Osthus~\cite{O} and Balogh, Mycroft and Treglown~\cite{BMT}, the latter establishing optimal bounds, and independently by Collares and Morris~\cite{CM} who established the analogous results for $k$-chains for all $k\geq 2$. Other results in this direction were obtained by  Patk\'os and Treglown~\cite{PT} and by Gerbner, Nagy, Patk\'os, and Vizer \cite{GNPV} for some special subfamilies of tree posets.  We establish a far-reaching generalization of all these  results by establishing the corresponding  result on $\Pa(n, p)$ for all tree posets $P$ in the induced setting, solving Conjecture 7 of \cite{GNPV} in this case.

\begin{theorem}\label{thm:randomturan}
 If $P$ is a tree poset and $pn \rightarrow \infty$, then with high probability, the largest induced $P$-free subset of $\Pa(n, p)$ has size $(k - 1 + o(1))p\binom{n}{n / 2}$. 
\end{theorem}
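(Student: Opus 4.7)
The plan is to combine the balanced supersaturation result \Cref{thm:balanced} with the hypergraph container method, in the same spirit as the arguments of Balogh--Mycroft--Treglown \cite{BMT} and Collares--Morris \cite{CM} in the special case of chains.

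\textbf{Lower bound.} Let $\M$ denote the union of the $k-1$ middle layers of $\B_n$. Any induced copy of a tree poset $P$ of height $k$ must map a $k$-element chain in $P$ to a strict $k$-chain in its image, which cannot fit inside $k-1$ antichains; hence $\M$ is induced $P$-free. Since $\EE|\M \cap \Pa(n,p)| = (k-1+o(1))p\binom{n}{\lfloor n/2 \rfloor}$ and $pn\to\infty$ forces $p\binom{n}{\lfloor n/2 \rfloor}\to\infty$, Chernoff gives $|\M \cap \Pa(n,p)| = (k-1+o(1))p\binom{n}{\lfloor n/2 \rfloor}$ with high probability.

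\textbf{Upper bound via containers.} Fix $\ve>0$. For each $\F \subseteq \B_n$ with $|\F| \ge (k-1+\ve)\binom{n}{\lfloor n/2 \rfloor}$, \Cref{thm:balanced} supplies a $|P|$-uniform hypergraph on $\F$ whose edges are induced copies of $P$ in $\F$ and whose edge set is balanced in the sense required by the container lemma. Feeding this hypergraph into the Balogh--Morris--Samotij / Saxton--Thomason container theorem and iterating, I would produce a family $\C$ of subsets of $\B_n$ satisfying
\begin{enumerate}[label=(\roman*)]
    \item every induced $P$-free family $\F \subseteq \B_n$ is contained in some $C \in \C$,
    \item $|C| \le (k-1+\ve)\binom{n}{\lfloor n/2 \rfloor}$ for each $C \in \C$, and
    \item $\log|\C| = o\bigl( n^{-1}\binom{n}{\lfloor n/2 \rfloor} \bigr)$.
\end{enumerate}
For a fixed $C \in \C$, the random variable $|C \cap \Pa(n,p)|$ is a sum of independent Bernoullis with mean at most $(k-1+\ve)p\binom{n}{\lfloor n/2 \rfloor}$, so Chernoff gives
$$\Pr\Bigl[|C \cap \Pa(n,p)| > (k-1+2\ve)p\tbinom{n}{\lfloor n/2 \rfloor}\Bigr] \le \exp\Bigl(-\Omega_\ve\bigl(p\tbinom{n}{\lfloor n/2 \rfloor}\bigr)\Bigr).$$
Since $pn\to\infty$, property (iii) makes $\log|\C|$ negligible compared to $p\binom{n}{\lfloor n/2 \rfloor}$, so a union bound over $\C$ shows that with high probability every induced $P$-free subfamily of $\Pa(n,p)$ has size at most $(k-1+2\ve)p\binom{n}{\lfloor n/2 \rfloor}$. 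Letting $\ve \to 0$ completes the upper bound.

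\textbf{Main obstacle.} The crux of the argument is establishing (iii): the container iteration must reduce the universe size from $|\B_n|=2^n$ down to roughly $(k-1)\binom{n}{\lfloor n/2 \rfloor}$ while exposing only $o\bigl(n^{-1}\binom{n}{\lfloor n/2 \rfloor}\bigr)$ bits of ``fingerprint'' data, and this estimate must remain strong enough to be killed by $p\binom{n}{\lfloor n/2 \rfloor}$ throughout the whole range $p \gg 1/n$. Doing so requires quantitatively sharp codegree control on the hypergraph of induced $P$-copies in an arbitrary $\F$, which is exactly what the tree-embedding machinery underlying \Cref{thm:balanced} must deliver in its strongest form. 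Cleanly tracking the parameters through the iteration, and verifying the codegree conditions at each level, is the technical heart of the proof.
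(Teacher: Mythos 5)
Your overall architecture (lower bound from the $k-1$ middle layers; upper bound from balanced supersaturation fed into an iterated container lemma) is the same as the paper's, and the lower bound and the construction of containers with $|C|\le (k-1+\ve)\binom{n}{\lfloor n/2\rfloor}$ are fine. However, there is a genuine gap in your step (iii) and the ensuing union bound. The container process here cannot produce a family with $\log|\C|=o\bigl(n^{-1}\binom{n}{\lfloor n/2\rfloor}\bigr)$: in the final iterations the ground set has size $\Theta\bigl(\binom{n}{\lfloor n/2\rfloor}\bigr)$ and the fingerprints have size up to $\Theta\bigl(n^{-1}\binom{n}{\lfloor n/2\rfloor}\bigr)$ (the hypergraph of induced $P$-copies is only dense enough to take $\tau\approx 1/n$ there), so the number of fingerprints of size $s\approx \frac{K}{n}\binom{n}{\lfloor n/2\rfloor}$ is of order $\bigl(\binom{n}{\lfloor n/2\rfloor}/s\bigr)^s\approx n^{s}$, i.e.\ $\log|\C|=\Theta\bigl(\frac{\log n}{n}\binom{n}{\lfloor n/2\rfloor}\bigr)$. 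A plain union bound over containers therefore only beats the Chernoff term $\exp\bigl(-\Omega_\ve(p\binom{n}{\lfloor n/2\rfloor})\bigr)$ when $p\gg \frac{\log n}{n}$, and the argument as written fails in the range $\frac{1}{n}\ll p\le \frac{\log n}{n}$, which is precisely the hard regime the theorem must cover.

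The missing idea, used in the paper (following Collares--Morris), is to exploit that the fingerprint $\widehat{T}=\widehat{g(I)}$ is itself a subset of $I\subseteq \Pa(n,p)$. One runs a first-moment argument over fingerprints rather than containers: the expected number of $T\in\mathcal{T}$ with $\widehat{T}\subseteq\Pa(n,p)$ \emph{and} $|C(T)\cap\Pa(n,p)|$ large is at most $\sum_s \bigl(K\binom{n}{\lfloor n/2\rfloor}/s\bigr)^s\, p^{s}\,\exp\bigl(-\Omega_\ve(p\binom{n}{\lfloor n/2\rfloor})\bigr)$. The extra factor $p^{s}$ turns the entropy term into $\bigl(Kp\binom{n}{\lfloor n/2\rfloor}/s\bigr)^s\le \exp\bigl(O(\tfrac{1}{n}\binom{n}{\lfloor n/2\rfloor}\log(pn))\bigr)$, which is dominated by the Chernoff term exactly when $pn\gg\log(pn)$, i.e.\ for all $pn\to\infty$. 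For this to work you also need the counting bound on fingerprints to be stated per size $s$ (as in the paper's Theorem 6.4(b)) rather than as a single bound on $|\C|$, and you need a bookkeeping step (the paper's Lemma 6.5) to control $\sum_i t_i\log(1/t_i)$ across the iterations. Without this weighted union bound your proof does not close.
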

The hypothesis that $pn\to \infty$ is best possible for this result to hold.   Indeed, as noted in \cite{CM, BMT}, if $p=cn^{-1}$ then a computation due to Osthus~\cite{O} shows that with high probability there exists subsets of $\Pa(n,p)$ of size at least $(k-1+e^{-c}+o(1)){n\choose n/2}$ which contains no $k$-chain and thus also no tree poset of height $k$.

Finally, it is worth highlighting that the main tool of this paper, Theorem~\ref{thm:cleaning}, which we believe is of independent interest, and can be viewed as a general tool similar to results in graphs of passing to subgraphs of high minimum degree. While the exact analogue of such a result is out of reach, that is,  obtaining a subfamily in the poset setting of ``high minimum degree'' we obtain a sequence of nested subfamilies of our original family $\mathcal{F}$ having high minimum degree in the predecessor of the sequence which still allows us to greedily find many embeddings of a tree poset, similar to embedding copies of a fixed tree in a high minimum degree subgraph. We believe that this result will have further applications in the future.

The rest of this paper is organized as follows. In \Cref{sec:notation} we introduce some notation.  In Section~\ref{sec:sketch} we present a sketch of the proof of Theorem~\ref{thm:main}. In Section~\ref{sec:cleaning} after gathering some preliminary lemmas, we prove Theorem~\ref{thm:cleaning}, in Section~\ref{sec:embedding} we prove both the usual and balanced supersaturation results Theorem~\ref{thm:main} and Theorem~\ref{thm:balanced}. In Section~\ref{sec:containers} we use hypergraph containers together with balanced supersaturation to prove Theorem~\ref{thm:counting} and Theorem~\ref{thm:randomturan}.

\section{Notation}\label{sec:notation}

Throughout our paper, we drop floors and ceiling whenever these are not crucial to our analysis. Let $$\wB_n := \left\{ F \in \B_n : \left|F - \frac{n}{2}\right| < 2 \sqrt{n \ln n } \right\}. $$

Using some standard tools, we will assume that the family $\F$ we are working with is subfamily of $\wB_n$ instead of $\B_n$.

We define the Hasse diagram of $P$, denoted $H(P)$, as a directed graph with vertex set $P$ where there is an edge from $x$ to $y$ only if $y > x$ and there is no $z$ such that $y > z > x$. Notice that this definition differs slightly from the classical definition of Hasse diagram via undirected graphs embedded on the plane, but for us it will be more convenient to use the directed setting.

Given a subfamily $\F\subseteq \B_n$ and a positive integer $q$, we say that a tuple $(F_1,\ldots,F_q)$ of members of $\F$ is a {\it  $q$-chain} if $F_1\supset F_2\supset \cdots \supset F_q$. In particular, when we refer to the $i$th element of any $q$-chain we mean the one with $i$th largest cardinality. We will abuse notation slightly by occasionally identifying decreasing tuples $(F_1,\ldots,F_q)$ by the corresponding set $\{F_1,\ldots,F_q\}$.  We use $\C$ to denote the family of all full chains of $\B_n$ (with $n$ fixed), i.e.\ $\C$ is the set of chains of length $n+1$ in $\B_n$. The height of $P$ is the largest length of a chain in $P$.  

Given a $q$-chain $Q$ in $\F$ and a full chain $\chi$ in $\B_n$ that contains all of the members of $Q$, we 
call the pair $(\chi, Q)$ a {\it $q$-marked chain with markers in $\F$ } or {\it a $q$-marked chain from $\F$}. Given a family of $q$-marked chains $\M$, let 
\[\mathcal{L}^i(\M) = \{ D \in \mathcal{B}_n : D \text{ is  the } i\text{th member of a } q \text{-chain in } \M \}.\]

Our proof will rely on obtaining a nice nested sequence of $q$-marked
chains from $\F$ that are of a specific form.  To this end, given a family $\T$ of $1$-marked chains of some subfamily $\F\subseteq \B_n$ and $\chi\in \C$, we define $\T(\chi)=\{F: (\chi,F)\in \T\}$.
We say that $\T$ is {\it $q$-strong} if for each $\chi\in \C$ where $\T(\chi)\neq \emptyset$, we have that $|\T(\chi)|\geq q$.  
For a $q$-strong  $1$-marked chain family $\T$ from $\F$, we define the {\it $q$-th power} of $\T$, denoted by $\T[q]$, to be the $q$-marked chain family
 \begin{equation} \label{power-definition}
\T [q]= \left\{(\chi, Q): Q\in \binom{\T(\chi)}{q}\right\}
\end{equation}

We will consider some of our results in a somewhat more general setting.  To this end, we define the {\it Lubell weight} of a family $\mathcal{F}$, denoted by $\mu(\F)$, by
\begin{equation} \label{lubell-definition}
\mu(\F)=\sum_{F\in \F} \frac{1}{\binom{n}{|F|}}.
\end{equation}
The Lubell weight is a natural measure to put on families in $\B_n$, and in particular we note that $\mu(\F)$ is the expected number of members of $\F$ that are contained in a uniformly randomly chosen full chain. While this notion is not strictly needed for our work, we are able to apply our methodology in this more general setting which has historically been of interest to extremal problems for posets, see for example \cite{Meroueh}.

In Section~6, we will use the Chernoff's Inequality in the following form, see \cite{Chernoff}: 

\begin{lemma}\label{lem:chernoff}
 If $X = \mathrm{Bin}(n, p)$ and $\delta \leq 1$, 

 $$\prob(|X  - \EE[X])| \geq \delta  \EE[X] \leq 2 \exp\left( - \frac{\delta^2}{3} \EE[X]\right).$$
\end{lemma}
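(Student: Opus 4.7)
The plan is to invoke the standard Cramér–Chernoff exponential moment method. Writing $X = \sum_{i=1}^n X_i$ with $X_i$ independent Bernoulli$(p)$, for any $t > 0$ Markov's inequality applied to $e^{tX}$ gives
$$\prob(X \geq (1+\delta)\EE[X]) \leq e^{-t(1+\delta)np}\,\EE[e^{tX}] = e^{-t(1+\delta)np}(1-p+pe^t)^n,$$
where independence was used to factor the moment generating function. The elementary inequality $1-p+pe^t \leq \exp(p(e^t-1))$ (a consequence of $1+x\leq e^x$) then yields $\EE[e^{tX}] \leq \exp(np(e^t-1))$.

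Optimizing over $t$ by choosing $t = \ln(1+\delta)$ produces the multiplicative Chernoff bound
$$\prob(X \geq (1+\delta)\EE[X]) \leq \left(\frac{e^{\delta}}{(1+\delta)^{1+\delta}}\right)^{\EE[X]}.$$
To massage this into the cleaner additive form stated in the lemma, I would verify the inequality $\frac{e^{\delta}}{(1+\delta)^{1+\delta}} \leq e^{-\delta^2/3}$ for $\delta \in [0,1]$; after taking logs, this reduces to $(1+\delta)\ln(1+\delta) - \delta \geq \delta^2/3$, which follows from the Taylor expansion $(1+\delta)\ln(1+\delta) = \delta + \tfrac{\delta^2}{2} - \tfrac{\delta^3}{6} + \cdots$ together with a direct check on $[0,1]$ (e.g., by computing the second derivative of the difference and handling the two subintervals $[0,1/2]$ and $[1/2,1]$ separately, or by comparing term-by-term to the Taylor series on the right).

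For the lower tail, the same procedure with $t = \ln(1-\delta) < 0$ gives the (even stronger) bound $\prob(X \leq (1-\delta)\EE[X]) \leq e^{-\delta^2\EE[X]/2}$. A union bound over the two tails then delivers the two-sided statement with the factor $2$ in front, exactly as claimed.

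There is no real obstacle here; the argument is entirely textbook. The only mildly finicky step is justifying the scalar inequality that converts the multiplicative form into the additive form with constant $1/3$, which is the standard ``cost'' one pays for stating Chernoff uniformly for all $\delta \in (0,1]$. The paper cites \cite{Chernoff} for this lemma, so the intent is clearly to invoke the bound as a black box rather than to reprove it; the sketch above merely records why the stated constants are the correct ones.
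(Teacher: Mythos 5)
Your proof is correct and is the standard Cram\'er--Chernoff argument (exponential moments, optimizing $t=\ln(1+\delta)$, the scalar inequality $(1+\delta)\ln(1+\delta)-\delta\ge \delta^2/3$ on $[0,1]$, and a union bound over the two tails), which is exactly the derivation in the reference \cite{Chernoff} that the paper cites; the paper itself gives no proof of this lemma. Nothing further is needed.
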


\section{Sketch of The Proof of Theorem~\ref{thm:main}}\label{sec:sketch}
Here we sketch how to prove \Cref{thm:main} which we recall says that every large family $\F \subset \B_n$ contains many copies of a given tree poset $P$. We will do this by showing that there are many ways of embedding $P$ into $\mathcal{F}$, but before we get into this we need some preliminaries.

It will be convenient to associate each member $F$ of a given subfamily $\F\subseteq \B_n$ with the full chains $\chi$ of $\B_n$ that contain it, so we will work with pairs $(\chi,F)$ instead of individual members $F$, where $F\in \mathcal{F}$ and $\chi$ is any full chain in the Boolean lattice. Likewise, we will associate each $q$-chain $Q$ in our family $\F$ with the full chains $\chi$ of $\B_n$ that contain it and call such pairs $(\chi, Q)$ $q$-marked chains.  It is worth mentioning that the  approach of working with $q$-marked chains was originated by Bukh~\cite{Bukh} and was later further developed by Boehnlein and Jiang~\cite{BJ}.  

Using Chernoff bounds, it is standard~\cite{BJ, GL} to show that the number of sets $F\in \B_n$ with $||F|-n/2|>2\sqrt{n\ln{n}}$ is $o({n \choose n/2})$. Thus, whenever we are given a dense subfamily $\F$ of $\B_n$ i.e. $|\mathcal{F}|=\Omega({n \choose n/2})$, by leaving out at most $o(\binom{n}{n/2})$ members of $\F$, we may assume that all $F\in \mathcal{F}$ lie in the family $\wB_n$.

At the heart of all of our arguments is a general result \Cref{thm:cleaning} which says that any subset $\F\sub \wB_n$ of large Lubell weight, more precisely at least of weight at least $q-1+\ve$,  contains a nested sequence of large $q$-marked  chain families  that have a certain robustness property which we will describe momentarily. Note that if $\mathcal{F}$ is of size at least $(q-1+\ve){n\choose \lfloor n/2\rfloor}$, then the hypothesis above  on the Lubell weight is satisfied. 

Before the embedding starts, we fix an ordering $x_1,x_2,\ldots,x_{|P|}$ of vertices of $P$ such that each $x_{j+1}$ has a unique neighbour (called the \emph{parent}) among $x_1, \dots x_{j}$, and we fix a poset homomorphism $\rank: P \rightarrow [q]$, with $[q]$ under the reverse of the natural total ordering. As a pre-processing step, we use our main tool \Cref{thm:cleaning} to generate a nested family of $q$-marked chains $\M^{0}\supseteq \M^{1}\supseteq \cdots \supseteq \M^{|P|}$ such that if $F\in \mathcal{F}$ is in some $q$-marked chain of $\mathcal{M}^i$ family then it is ``robust''  with respect to $\mathcal{M}^{i-1}$ as well, meaning $F$ is contained in ``many'' $q$-chains of $\mathcal{M}^{i-1}$ as well. The idea of nested families $\mathcal{M}^i$ may seem a bit peculiar but unfortunately we were not able to guarantee the existence of a single marked chain family $\mathcal{M}$ with the property of every $F$ being contained in the desired number of copies of $q$-marked chains in $\mathcal{M}$. To obtain this nested sequence, we iteratively remove all the sets  $F\in \mathcal{F}$ which are \emph{bad} with respect to $\mathcal{M}^{j-1}$ and the relevant $q$-chains from $\mathcal{M}^{j-1}$ subsequently, and  ensure that the sizes of $\mathcal{M}^j$ do not shrink dramatically, thus guaranteeing that after $|P|$ many steps we still have a large family of  $q$-marked chains.

Having run the cleaning process, we embed the vertices $x_1, x_2, \dots x_{|P|}$ iteratively so that at the $j$th step we find many partial embedding of the form  $\varphi^j: \{ v_1, v_2, \dots ,  v_j \}$. We embed $v_1$ arbitrarily in the  $\rank(v_1)$-level of any $q$-marked chain inside $\M^{|P|}$. Notice that the number of elements in the $\rank(v_1)$-level of a chain in $\M^{|P|}$ gives us a lower bound on the number of ways to embed $v_1$. Also while $\M^{|P|}$ is the smallest family of $q$-marked chains it has the property that every $F$ in a $q$-chain of $\mathcal{M}^{|P|}$ is ``robust" which respect to every $\mathcal{M}^j$ with $j <|P|$. This is  the incentive behind  embedding the vertices of $P$ in the reverse order of the nested families, more precisely, $v_j$ will be embedded in  $\mathcal{M}^{|P|-j+1}$.   At the $j$th step, we embed $v_{j}$, assuming that its parent $y$  who is among $\{v_1, v_2, \dots, v_{j-1}\}$ is already embedded.  We wish to embed $v_{j}$ in the $\rank(v_{j})$th position of some $q$-marked chain of $\mathcal{M}^{|P|-j+1}$ which has $\varphi^{j - 1}(y)$ in the $\rank(y)$th position.  By the cleaning process we ran earlier,  we are guaranteed to have many such choices to embed $v_j$ and find many partial embeddings $\varphi^{j}$.  This embedding procedure in total gives a lower bound on the number of induced copies of $P$ in $\F$ as a function of our choice of $\rank$. It turns out there is a relation between $M^*(n,q,P)$ and the counting  of copies of $P$ via all such rank functions $\rank$ (see Lemma~\ref{lem:rankandmiddle}). Thus we may choose $\rank$ suitably and get this lower bound to be a  constant fraction of $M^*(n,q,P)$.  This gives us our desired supersaturation result saying that the number of induced copies of $P$ guaranteed in $\F$ is as least as large as a fraction of the number of induced copies of $P$   in the middle $q$-levels without knowing $M^*(n,q,P)$ explicitly.

The full details of this embedding are slightly technical, as one also needs to make sure that the embedding guarantees noncomparable pairs stay noncomparable and other nuances, however the main gist of the argument is that the cleaning process in Theorem~\ref{thm:cleaning} provides the framework to do this successfully.

\section{Key Tools}\label{sec:cleaning}
In this section, we state and prove our main tool \Cref{thm:cleaning}, which roughly speaking guarantees in any large $\F\sub \B_n$ the existence of a nested sequence of $q$-marked chains $\M^{|P|}\sub \M^{|P|-1}\sub \cdots \sub \M^0$ which are ``robust'' in a certain way.  We begin by establishing some preliminary lemmas.

\subsection{Counting Lemmas on Marked Chains}\label{sub:counting}

In this section, we collect some basic counting lemmas about marked chains that we will use in the final step of our proof \Cref{prop:embedding}.  We begin by recalling the following lemma of  Bukh \cite[Lemma 4]{Bukh} for $q$-marked chains.

\begin{lemma}[\cite{Bukh}]\label{lem:Bukh}
    If $\F \subseteq \B_n$ and $|\F| \geq (q - 1 + \ve)\binom{n}{n / 2}$, then 
    there are at least $
    \frac{\ve}{q} n!$ $q$-marked chains with markers from $\F$. 
\end{lemma}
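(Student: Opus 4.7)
My plan is to count $q$-marked chains by organizing them according to their underlying full chain. For each full chain $\chi \in \C$, let $X_\chi := |\F \cap \chi|$. Since any $q$ members of $\F \cap \chi$ are automatically totally ordered (as they all lie on $\chi$), they determine a unique decreasing $q$-tuple, so the number of $q$-chains $Q \subseteq \F$ with $Q \subseteq \chi$ is exactly $\binom{X_\chi}{q}$. Consequently the total number of $q$-marked chains with markers in $\F$ equals
\[
N := \sum_{\chi \in \C} \binom{X_\chi}{q}.
\]

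The next step is to control $\sum_\chi X_\chi$ via a standard LYM-style double count. Writing $X_\chi = \sum_{F \in \F} \one[F \in \chi]$ and noting that the number of full chains passing through a fixed $F$ is $|F|!\,(n-|F|)!$, I get
\[
\sum_{\chi \in \C} X_\chi \;=\; \sum_{F \in \F} |F|!\,(n-|F|)! \;=\; n! \sum_{F \in \F} \frac{1}{\binom{n}{|F|}} \;=\; n!\,\mu(\F).
\]
Using $\binom{n}{|F|} \le \binom{n}{\lfloor n/2\rfloor}$ termwise, the hypothesis $|\F| \ge (q-1+\ve)\binom{n}{\lfloor n/2\rfloor}$ yields $\mu(\F) \ge q-1+\ve$, so $\sum_\chi X_\chi \ge (q-1+\ve)\,n!$.

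To finish, I would apply the elementary convexity-style inequality
\[
\binom{x}{q} \;\ge\; \frac{x - (q-1)}{q} \qquad \text{for every integer } x \ge 0.
\]
Indeed, for $x \le q-1$ the right-hand side is $\le 0$ while the binomial is $\ge 0$; for $x \ge q$ one has $\binom{x}{q} \ge 1 \ge (x-q+1)/q$ when $x \in \{q, q+1, \ldots\}$ by a trivial induction (or by noting $\binom{x}{q}/\binom{x-1}{q-1} = x/q$ and $\binom{x-1}{q-1} \ge 1$). Summing this pointwise bound over $\chi \in \C$ gives
\[
N \;\ge\; \frac{1}{q}\sum_{\chi \in \C} \bigl(X_\chi - (q-1)\bigr) \;=\; \frac{n!\bigl(\mu(\F) - (q-1)\bigr)}{q} \;\ge\; \frac{\ve}{q}\, n!,
\]
which is the desired bound.

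The argument is essentially a two-line averaging proof once the right linearization of $\binom{x}{q}$ is in hand, so there is no real obstacle; the only point requiring even a moment's thought is verifying that the linear lower bound on $\binom{x}{q}$ holds uniformly (in particular for $x < q$, where $\binom{x}{q}$ vanishes but $(x-q+1)/q$ is non-positive). Everything else is a direct combination of the LYM identity with the hypothesis on $|\F|$.
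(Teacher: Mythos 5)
Your proof is correct and follows essentially the same route as the paper's (which derives this lemma from \Cref{size-to-weight}): a LYM double count giving $\sum_{\chi\in\C}|\F\cap\chi|=\mu(\F)\,n!\ge (q-1+\ve)n!$, followed by discarding a budget of $q-1$ members per full chain and dividing by $q$, your pointwise bound $\binom{x}{q}\ge (x-q+1)/q$ being just a linearized packaging of the paper's two-step accounting. One small slip: in your justification the intermediate chain ``$\binom{x}{q}\ge 1\ge (x-q+1)/q$'' fails for $x\ge 2q$, but the parenthetical argument via $\binom{x}{q}=\tfrac{x}{q}\binom{x-1}{q-1}\ge \tfrac{x}{q}$ is valid, so the proof stands.
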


For our purposes we will need the following variant of \Cref{lem:Bukh}.

\begin{lemma} \label{size-to-weight}
Let $q$ be  a positive integer and $\ve>0$. Let $\F\subseteq \B_n$. Suppose $\mu(\F)\geq q-1+\ve$ and let
\[\T=\{(\chi,F): \chi\in \C, F\in \chi\cap \F, |\chi\cap \F|\geq q\}.\]
Then, $\T$ is a $q$-strong $1$-marked chain family from $\F$, satisfying $|\T|\geq \ve n!$.
In particular, if $|\F|\ge (q-1+\ve)\max_{F\in \F} {n\choose |F|}$ then $|\T|\geq \ve n!$
\end{lemma}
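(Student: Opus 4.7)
The plan is to handle the two assertions separately, as both follow quickly once one writes out the definitions of $\T$ and invokes the standard chain-counting identity for the Lubell weight.

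First, for the $q$-strong property, I would observe directly from the construction of $\T$ that for any $\chi \in \C$ the fibre $\T(\chi) = \{F : (\chi,F) \in \T\}$ is either empty (precisely when $|\chi \cap \F| < q$) or equal to the full intersection $\chi \cap \F$ (when $|\chi \cap \F| \geq q$). In the latter case $|\T(\chi)| = |\chi \cap \F| \geq q$, so whenever $\T(\chi)$ is nonempty its size is at least $q$, which is exactly $q$-strongness. This step is definitional and involves no real content.

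The main step is the cardinality bound $|\T| \geq \ve n!$, for which the key input is the double-counting identity
\[\sum_{\chi \in \C} |\chi \cap \F| \;=\; \sum_{F \in \F} \frac{n!}{\binom{n}{|F|}} \;=\; n! \cdot \mu(\F),\]
which comes from the fact that each $F \in \F$ is contained in exactly $|F|!(n-|F|)!$ full chains of $\B_n$. I would then split the left-hand sum according to whether $|\chi \cap \F| \geq q$ or not. For a fixed $\chi$ with $|\chi \cap \F| \geq q$ the number of pairs $(\chi,F) \in \T$ is exactly $|\chi \cap \F|$, so the first type contributes exactly $|\T|$ to the sum, while the chains of the second type each contribute at most $q-1$, totaling at most $(q-1)|\C| = (q-1)n!$. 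Rearranging and applying the hypothesis $\mu(\F) \geq q-1+\ve$ gives $|\T| \geq n!\mu(\F) - (q-1)n! \geq \ve n!$.

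Finally, the ``in particular'' clause reduces to the main statement by observing that if $|\F| \geq (q-1+\ve)\max_{F \in \F}\binom{n}{|F|}$, then lower-bounding each summand $1/\binom{n}{|F|}$ in the definition of $\mu(\F)$ by $1/\max_{F' \in \F}\binom{n}{|F'|}$ shows $\mu(\F) \geq q-1+\ve$, so the hypothesis of the main assertion holds. There is no real obstacle in any step; the whole lemma is essentially a Lubell-weight analogue of Bukh's counting estimate (\Cref{lem:Bukh}), and the proof amounts to unraveling definitions together with the identity displayed above.
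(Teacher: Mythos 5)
Your proof is correct and follows essentially the same argument as the paper: the same double-counting identity $\sum_{\chi}|\chi\cap\F| = n!\,\mu(\F)$, the same split of the sum according to whether $|\chi\cap\F|\ge q$, and the same reduction of the ``in particular'' clause to the Lubell-weight hypothesis. The explicit verification of $q$-strongness is a nice touch that the paper leaves implicit.
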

\begin{proof}
Let $\M=\{(\chi, F): \chi \in \C, F\in \F\}$. 
For each $i\in [n]$, let $C_i$ denote the number of full chains $\chi$ that contain exactly $i$ members of  $\F$.  
Then $|\M|=\sum_{i = 1}^n iC_i$. On the other hand, for each $F\in \F$, the number of full chains in $\B_n$ that contain $F$
is exactly $\frac{n!}{\binom{n}{|F|}}$. Hence,
 
\[\sum_{i = 1}^n iC_i=\sum_{F\in \F} \frac{n!}{{n\choose |F|}}=\mu(\F)n!\ge (q-1+\ve)n!.\]
Clearly, $\sum_{i<q} iC_i\leq (q-1)n!$.  Hence, $|\T|=\sum_{i\geq q} iC_i\geq \ve n!$.

 For the second statement, suppose $|\F|\geq (q-1+\ve) \max_{F\in \F}\binom{n}{|F|}$.
 Then 
 \[\mu(F)=\sum_{F\in \F}\frac{1}{\binom{n}{|F|}}\geq \frac{|\F|}{\max_{F\in \F} \binom{n}{|F|}} \geq q-1+\ve.\]
 So, the statement follows from the first statement.
\end{proof}
We note that although \Cref{size-to-weight} is stated in terms of a particular 1-marked chain family $\T$, it being $q$-strong immediately implies that $\F$ contains at least $\frac{\ve}{q} n!$ $q$-marked chains.  This recovers Bukh's original lemma that says if $|\F|\ge (q-1+\ve){n\choose n/2}$ then it contains at least $\frac{\ve}{q}n!$ $q$-marked chains and more generally recovers it under the weaker hypothesis $\mu(\F)\geq q-1+\ve$.

We will also need the following technical lemma, which roughly says that if $\T$ is a large $q$-strong 1-marked chain family, then there are many members of $\F$ that can start our embedding of $P$ at the ``$i$th level'' for all $i\in [q]$. 
 
% For this we recall that we order the members of our $q$-chain $(F_1,\ldots,F_q)$ by $F_1\supset F_2\supset \cdots \supset F_q$.
 
\begin{lemma} \label{lem:translatingWeight}
Let $\F$ be a subfamily of $\B_n$ and $q$ a positive integer. Let $\T$ be a $q$-strong $1$-marked chain
  family from $\F$ and 
  let $\M=\T[q]$ be the $q$-th power of $\T$.  For each $i\in [q]$ and each $\chi\in \C$ let 
  \[\cL^i(\M,\chi):=\{F\in \F: \exists (\chi,Q)\in \M \text{ such that $F$ is the $i$-th member on $Q$}\},\]
  and let $\cL^i(\M)=\bigcup_{\chi\in \C} \cL^i(\M,\chi)$.
  If $|\T|\geq \ve n!$, then for each $i\in [q]$, we have \[|\cL^i(\M)|\geq \frac{\ve}{q} \min_{F\in \F} \binom{n}{|F|}.\]
\end{lemma}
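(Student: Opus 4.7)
The plan is a double-counting argument on the set of pairs $(\chi, F)$ with $\chi \in \C$ and $F \in \cL^i(\M, \chi)$, where the hypothesis $|\T| \geq \ve n!$ gives a lower bound on the count and the average chain-multiplicity of members of $\F$ gives the matching upper bound.

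First, I would analyze $|\cL^i(\M, \chi)|$ for a single full chain $\chi$. Writing $t = |\T(\chi)|$ and listing $\T(\chi) = \{F_1 \supset F_2 \supset \cdots \supset F_t\}$ in decreasing order of size, the element $F_j$ occurs as the $i$-th member of some $q$-chain $Q \in \binom{\T(\chi)}{q}$ if and only if one can choose $i-1$ supersets of $F_j$ and $q-i$ subsets of $F_j$ from $\T(\chi)$; equivalently $i \leq j \leq t - q + i$. Since $\T$ is $q$-strong, whenever $\T(\chi) \neq \emptyset$ we have $t \geq q$, and so
$$|\cL^i(\M, \chi)| = \max(t - q + 1, 0),$$
while the quantity is $0$ when $\T(\chi) = \emptyset$.

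Second, I would use the elementary inequality $t - q + 1 \geq t/q$, valid precisely when $t \geq q$, to compare $\sum_\chi |\cL^i(\M,\chi)|$ with $|\T| = \sum_\chi |\T(\chi)|$. Summing the pointwise bound over $\chi$ yields
$$\sum_{\chi \in \C} |\cL^i(\M,\chi)| \;\geq\; \frac{1}{q}\sum_{\chi \in \C} |\T(\chi)| \;=\; \frac{|\T|}{q} \;\geq\; \frac{\ve\, n!}{q}.$$
On the other hand, every $F \in \cL^i(\M)$ lies in exactly $n!/\binom{n}{|F|}$ full chains of $\B_n$, so
$$\sum_{\chi \in \C} |\cL^i(\M,\chi)| \;\leq\; \sum_{F \in \cL^i(\M)} \frac{n!}{\binom{n}{|F|}} \;\leq\; \frac{|\cL^i(\M)|\, n!}{\min_{F \in \F}\binom{n}{|F|}}.$$
Combining these two inequalities gives the claimed bound $|\cL^i(\M)| \geq \frac{\ve}{q}\min_{F \in \F}\binom{n}{|F|}$.

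I do not anticipate a real obstacle here: once one sees that a single chain contributes $|\T(\chi)| - q + 1$ elements to each level $\cL^i(\M, \chi)$, the entire argument reduces to the numerical inequality $(t-q+1)/t \geq 1/q$ for $t \geq q$, which is precisely where the $q$-strong hypothesis is used (and where the bound is tight, when every nonempty $\T(\chi)$ has size exactly $q$). The only mild care is in the single-chain analysis, to verify that the set of admissible positions $j$ is indeed an interval of length $t - q + 1$ independent of $i$.
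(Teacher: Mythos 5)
Your proposal is correct and follows essentially the same double-counting argument as the paper: a per-chain lower bound $|\cL^i(\M,\chi)|\ge \frac{1}{q}|\T(\chi)|$ (you make the sharper observation that the count is exactly $\max(t-q+1,0)$ and then use $t-q+1\ge t/q$ for $t\ge q$, which is precisely where the paper also invokes $q$-strength), combined with the upper bound coming from each $F$ lying in exactly $n!/\binom{n}{|F|}$ full chains. No substantive difference from the paper's proof.
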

\begin{proof}
   Consider any $\chi\in \C$ where $\T(\chi) \neq \emptyset$. Because $\T$ is $q$-strong, we have $|\T(\chi)|\geq q$ by definition. Note that
   for each $F\in \T(\chi)$ that is not among the largest $i-1$ members or smallest $q-i-1$ members in $\T(\chi)$,
   there exists a $q$-chain in $\binom{\T(\chi)}{q}$ that contains $F$ as the $i$-th member. Hence, $|\cL^i(\M,\chi)|\geq \frac{1}{q}|\T(\chi)|$
   and therefore 
   
   \begin{equation}\label{eqn:Li-upper}
   \sum_{\chi\in \C} |\cL^i(\M,\chi)|\geq \frac{1}{q} \sum_{\chi\in \C} |\T(\chi)|\geq \frac{\ve}{q} n!.
   \end{equation}
   On the other hand, for each $F\in \cL^i(\M)$, $F$ is contained in exactly $n!/\binom{n}{|F|}$ full chains of $\B_n$ and
   hence belongs to $\cL^i(\M,\chi)$ for  at most $n!/\binom{n}{|F|}$ different $\chi$. Hence,
   \begin{equation} \label{eqn:Li-lower}
   \sum_{\chi\in \C} |\cL^i(\M,\chi)|\leq \sum_{F\in\cL^i(\M)}\frac{n!}{\binom{n}{|F|}}\leq n!\cdot \frac{|\cL^i(\M)|}{\min_{F\in \F} \binom{n}{|F|}}. 
   \end{equation}
   Combining \eqref{eqn:Li-upper} and \eqref{eqn:Li-lower}, we get $|\cL^i(\M)|\geq \frac{\ve}{q} \min_{F\in \F} \binom{n}{|F|}$.
\end{proof}

\subsection{Main Cleaning Result}\label{sub:cleaning}

 Given a family $\F\sub \B_n$ and a family $\M$ of $q$-marked chains from $\F$, we define for each $i\in [q]$, $F\in \F$ and $\chi\in \C$ the sets

\[\M(\chi, F,i)=\{(\chi,Q)\in \M: F \text{ is the $i$-th member of } Q\},\]
\[\mathcal{M}(F,i)=\bigcup_{\chi \in \C}{\mathcal{M}(\chi, F,i)}.\]

\begin{definition}
For any $i\in [q]$, we say a member $F\in \B_n$ is {\it $(i, \delta)$-lower bad} with respect to $\mathcal{M}$  if $\M(F,i)\neq \emptyset$ and if there
exists a subfamily $\W\subseteq \B_n$ such that the following three properties hold:
\begin{enumerate}
    \item[$(a)$] Every $D \in \W$ satisfies $D \subseteq F$. 

    \item[$(b)$] For every $(\chi, Q) \in \M(F,i)$, we have $Q \cap \W \neq \emptyset$. 
    \item[$(c)$] We have
    $$\prob[\chi_0 \cap \W\neq \emptyset | F\in \chi_0 ]\leq \delta,$$
         where $\chi_0$ is a uniformly randomly chosen  full chain of $\B_n$.
\end{enumerate}
Any such subfamily $\W$ will be called an \emph{$(i,\delta)$-lower witness} for $F$.    
\end{definition}

Informally, $F$ being $(i,\delta)$-lower bad means that there exists a subfamily of small measure (in the sense of property (c)) $\W$ of members below $F$ such that every chain in $\M$ which has $F$ as the $i$th member must pass through this subfamily.  While there may be many subfamilies $\W$ which are $(i,\delta)$-lower witnesses for $F$, in some contexts it will be useful to work with some fixed canonical witness. To this end, for any $F$ that is $(i, \delta)$-lower bad with respect to $\M$, we let $\W(F,i, \M)$ denote the lexicographically minimal $\W$ which is an $(i,\delta)$-lower witness.
Note that for all $\W$, 
     \begin{equation} \label{eqn:lubell} 
     \prob[\chi_0 \cap {\W} \neq \emptyset | F \in \chi_0]\leq \sum_{D\in \W} \prob(D\in \chi_0| F\in \chi_0) =  \sum_{D \in {\W}} \frac{1}{\binom{ |F|}{|F - D|}},
     \end{equation} 
where the right hand side can be thought of as the Lubell weight of $\W$ relative to $F$. 

\begin{definition}
    Similarly, we say 
$F$ is {\it $(i, \delta)$-upper bad} with respect to $\mathcal{M}$  if there exists a set $\widehat{\W}$ such that the following three properties hold:  
\begin{enumerate}
    \item[($\hat{a}$)] Every $D \in \widehat{\W}$ satisfies $D \supseteq F$. 
       \item[$(\hat{b})$] For every $(\chi, Q) \in \M(F,i)$, we have $Q \cap \widehat{\W} \neq \emptyset$. 
    \item[$(\hat{c})$] We have $$\prob[\chi_0 \cap \widehat{\W}\neq \emptyset | F\in \chi_0 ] \leq \delta,$$
     where $\chi_0$ is a uniformly randomly chosen  full chain.
\end{enumerate}
Such $\widehat{\W}$ is called an  \emph{$(i, \delta)$-upper witness} for $F$.

\end{definition}

\begin{definition}\label{deltarobust}
    We say that a member $F\in \B_n$ is {\it $\delta$-robust} with respect to a $q$-marked chain family $\M$ if for
each $i\in [q]$, $F$ is neither $(i, \delta)$-lower-bad nor $(i, \delta)$-upper-bad with respect to $\M$.
\end{definition}

The rest of the section is dedicated to proving the following result which  builds a nested sequence of families of $q$-marked chains
with some robustness features with the additional property that each of these families is the $q$-th power of some
family of $1$-marked chains (recall the definition of the $q$-th power of a $1$-marked chain family from \eqref{power-definition}). This theorem provides the most important ingredient of our proof of Theorem~\ref{thm:cleaning}.

\begin{theorem} \label{thm:weakCleaning}
For all integers $q\ge 1$ and for all reals $\ve>0$, there exists some $\del>0$ such that the following holds.  Let $\F \subseteq \widetilde{\mathcal{B}}_n$ and let $\mathcal{T}^0$ be a $q$-strong $1$-marked chain family with markers from $\F$ such that $|\mathcal{T}^0| \geq \ve n!$. Then 
there exists a collection of $1$-marked chains $\mathcal{T}^{|P|} \subseteq \mathcal{T}^{|P| - 1} \subseteq \dots \subseteq \mathcal{T}^0 $ satisfying the following:
\begin{enumerate}
    \item For each $j=0,\dots, |P|$, $\T^j$ is $q$-strong.
    \item For each $j=1,\dots, |P|$, for each $(\chi,Q)\in \mathcal{T}^j[q]$ and $F\in Q$, $F$ is $\delta$-robust with respect to $\mathcal{T}^{j - 1}[q]$.
    \item $|\mathcal{T}^{|P|}| \geq  \frac{2\ve}{3} n!$. 
\end{enumerate}
\end{theorem}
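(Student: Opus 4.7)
The plan is to construct the nested sequence $\T^0 \supseteq \T^1 \supseteq \cdots \supseteq \T^{|P|}$ by iteratively cleaning out non-robust markers. For each $j \in [|P|]$, let $B^{j-1} \subseteq \F$ denote the set of $F$ that fail to be $\delta$-robust with respect to $\T^{j-1}[q]$---equivalently, those $F$ admitting an $(i, \delta)$-lower-bad or $(i, \delta)$-upper-bad witness for some $i$. Set $\widetilde{\T}^j := \{(\chi, F) \in \T^{j-1} : F \notin B^{j-1}\}$, and then obtain $\T^j$ from $\widetilde{\T}^j$ by discarding every pair lying on a chain $\chi$ with $0 < |\widetilde{\T}^j(\chi)| < q$, restoring $q$-strongness. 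By construction, each $\T^j$ is $q$-strong (property~1), and every $F$ in a $q$-chain of $\T^j[q]$ lies outside $B^{j-1}$, so it is $\delta$-robust with respect to $\T^{j-1}[q]$ (property~2).

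The proof then reduces to a per-iteration loss bound
\[ |\T^{j-1}| - |\T^j| \leq \frac{\ve}{3|P|}\, n!, \]
for $\delta = \delta(q, \ve, |P|)$ chosen sufficiently small; summing over the $|P|$ steps yields $|\T^{|P|}| \geq |\T^0| - \tfrac{\ve}{3} n! \geq \tfrac{2\ve}{3} n!$. To prove this bound, I would use the Lubell-weight interpretation of property~(c). For each bad $F \in B^{j-1}$ that is, say, $(i, \delta)$-lower-bad with canonical witness $\W$, property~(b) implies that every $\chi$ on which $F$ appears at position $i$ of a $q$-chain in $\T^{j-1}[q]$ must pass through $\W$, and property~(c) caps the number of such $\chi$ at $\delta \cdot n!/\binom{n}{|F|}$. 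Summing over the bad positions and directions of each $F \in B^{j-1}$, and using $\F \subseteq \wB_n$ so that every $\binom{n}{|F|}$ is uniformly comparable to $\binom{n}{n/2}$ up to polynomial factors in $n$, one should be able to relate the total loss in the removal step to $O(\delta) \cdot |\T^{j-1}|$ plus lower-order terms. The loss in restoring $q$-strongness is bounded by $(q-1)$ times the number of chains damaged in the removal, and is dominated by it.

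The main obstacle I anticipate is precisely this loss bound. The subtlety is that the canonical witness $\W$ only controls those chains through $F$ on which $F$ could occupy the specific bad position $i$, whereas the overall number of $1$-marked pairs $(\chi, F) \in \T^{j-1}$ associated with a bad $F$ could be considerably larger, since $F$ may occupy non-bad positions along other chains. To close this gap, I would process bad $F$'s iteratively, charging each removed pair against a small-Lubell-weight witness configuration. Careful bookkeeping, combined with the $\wB_n$-hypothesis that smooths out the binomial-coefficient asymmetries, should then deliver the required bound for a suitable choice of $\delta = \delta(q, \ve, |P|)$.
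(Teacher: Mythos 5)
Your high-level scheme --- iterate $|P|$ times, delete non-robust markers, restore $q$-strongness, and prove a per-step loss of at most $\tfrac{\ve}{3|P|}n!$ --- is the same as the paper's, and you correctly identify where the difficulty lies; but the resolution you sketch for that difficulty does not work, and the missing step is the actual content of the proof. Two concrete problems. First, you delete $(\chi,F)$ for \emph{every} chain through a bad $F$, whereas the witness $\W$ of an $(i,\delta)$-lower-bad $F$ only constrains the chains on which $F$ occupies position $i$ of some $q$-chain of $\T^{j-1}[q]$; the pairs $(\chi,F)$ where $F$ sits in other positions are not controlled by the witness at all, so there is nothing to charge their removal to. (The paper avoids this by only declaring $F$ bad \emph{on a chain $\chi$} when $F$ actually occupies the offending position there.) Second, and more fundamentally, even after restricting to pairs in bad positions, the union bound you propose yields a loss of at most $\sum_{F\ \mathrm{bad}} \delta\, n!/\binom{n}{|F|} = \delta\,\mu(\F_{\mathrm{bad}})\,n!$, and nothing bounds $\mu(\F_{\mathrm{bad}})$: the Lubell weight of all of $\wB_n$ is already of order $\sqrt{n\ln n}$, so for a constant $\delta$ (which is required, since $\delta$ is quantified before $n$) this bound is far weaker than $\tfrac{\ve}{3|P|}n!$. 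The $\wB_n$ hypothesis does not ``smooth'' this away --- binomial coefficients across $\wB_n$ still differ by polynomial factors --- and in the paper it is used for a different purpose entirely (controlling forbidden neighborhoods in \Cref{lem:badequiv}).

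The idea you are missing is that bad members must be charged \emph{per chain}, not individually. The paper splits $\C$ into chains on which the bad members form more than a $1/\Delta$ fraction of $\T^{j-1}(\chi)$ and the rest. On the rest, deleting the bad members costs only a $2/\Delta$ fraction of $|\T^{j-1}(\chi)|$ and automatically preserves $q$-strongness, since any such chain carrying a bad member must retain at least $\Delta-2\ge q$ markers. For a chain with many bad members, one records a ``bad profile'': an interleaved sequence of $m$ bad members together with elements of their respective witness sets along the chain, with $m=\Omega(|\T^{j-1}(\chi)|)$. The conditional bound $\delta$ then applies at each of the $m$ interleaved steps via a Markov-type conditioning argument (\Cref{numchains}), so at most $q\delta^m n!$ chains realize a given profile, and summing over the at most $\binom{Km}{2m}\le 2^{Km}$ profiles of each length shows the total weight carried by such chains is $O(2^K K q\delta\, n!)$, which is negligible for small constant $\delta$. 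It is this multiplicative $\delta^m$ gain from \emph{many} bad members interleaved with their witnesses on a single chain --- not a one-witness-per-bad-$F$ charging scheme --- that closes the loss bound; a chain carrying a single bad member earns only one factor of $\delta$, which, as explained above, is not enough.
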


 Before starting the proof, we set up some notation and claims that will be useful. In what follows, we fix $\F\sub \wB_n$ as in the theorem statement together with the large constant
\[\Delta := 12 |P| + q + 2,\] 
and for later convenience we define
\[K := \Delta q^2 \left( \frac{\Delta}{\Delta - 2} \right)^{|P|}.\]

  Starting with $\mathcal{T}^0$, we will build our subsets $\T^{j}$ as follows. Suppose $\T^{j-1}$ has already been defined for some $1\le j\le |P|$, and for ease of notation let $\M^{j-1}=\T^{j-1}[q]$.
For each $i=1,\dots, q$ and for every $\chi \in \mathcal{C}$, let $B^{j - 1}(\chi, i, \uparrow)$ be the set of members $F$ in $\mathcal{T}^{ j - 1}(\chi)$ such that $F$ is the $i$th member of some $q$-chain in $\mathcal{M}^{j - 1}(\chi)$ and $F$ is $(i, \delta)$-upper bad to $\mathcal{M}^{j - 1}$. Let  $B^{j - 1}(\chi, i, \downarrow)$ be the set of members $F$ in $\mathcal{T}^{ j - 1}(\chi)$ such that $F$ is $i$th member of some $q$-chain in $\mathcal{M}^{j - 1}(\chi)$ and $F$ is $(i, \delta)$-lower bad with respect to $\mathcal{M}^{j - 1}$. For convenience we denote the union of these sets by $$B^{j-1}(\chi, \downarrow) =  \bigcup_{1 \leq i \leq q}  B^{j - 1}(\chi,i, \downarrow),$$ $$B^{j-1}(\chi, \uparrow) =  \bigcup_{1 \leq i \leq q}  B^{j - 1}(\chi, i, \uparrow),$$
$$B^{j - 1} (\chi) = B^{j - 1}(\chi, \downarrow) \cup B^{ j -1}(\chi, \uparrow).$$

We now classify our chains $\chi \in \C$ based on whether they contain a relatively large number of bad members or not.  To this end we define
\[\mathcal{C}_1^j(\downarrow) = \left\{ \chi : |B^{j-1}(\chi, \downarrow)|>|\mathcal{T}^{j-1}(\chi)|/\Delta\right\},\]
\[\mathcal{C}_1^j(\uparrow) = \left\{ \chi : |B^{j-1}(\chi, \uparrow)|>|\mathcal{T}^{j-1}(\chi)|/\Delta\right\},\]
\[\mathcal{C}_2^j=\mathcal{C} - \mathcal{C}_1^j(\downarrow)  - \mathcal{C}_1^j(\uparrow).\] 
Let
\[\T^j=\{(\chi,F):\chi \in \C_2^j,\ F\in \T^{j-1}(\chi)-B^{j-1}(\chi)\}.\]
In other words, to form $\T^j$ from $\T^{j-1}$, we remove all $(\chi,F)\in \T^{j-1}$
from each $\chi\in \C^j_1(\downarrow)\cup \C_1^j(\uparrow)$ (i.e.\ from those $\chi$ with a large number of bad members), and for each $\chi\in \C^j_2$ we remove those $(\chi,F)$
where $F$ is bad.  In particular, we record the following immediate consequence of the definition of $\C_2^j$.
\begin{lemma}\label{C2Bound}
    If $\chi\in \C_2^j$ for some $j$, then
    \[|\T^j(\chi)|\ge \left(1-\frac{2}{\Delta}\right)|\T^{j-1}(\chi)|.\]
\end{lemma}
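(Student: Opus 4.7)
The claim should be immediate from the definitions, so my plan is essentially a direct unpacking. First I would observe that by the construction of $\T^j$, for any $\chi \in \C_2^j$ we have $\T^j(\chi) = \T^{j-1}(\chi) \setminus B^{j-1}(\chi)$, since the only removals performed on chains in $\C_2^j$ are the bad members. Note that from their definitions $B^{j-1}(\chi,i,\uparrow)$ and $B^{j-1}(\chi,i,\downarrow)$ are subsets of $\T^{j-1}(\chi)$, so the set difference above is genuinely a restriction to non-bad members and no extra intersection needs to be taken.

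Next I would control $|B^{j-1}(\chi)|$ by the union bound together with the defining inequalities of $\C_2^j$. Since $\chi \notin \C_1^j(\downarrow)$ and $\chi \notin \C_1^j(\uparrow)$, we have
\[|B^{j-1}(\chi,\downarrow)| \leq |\T^{j-1}(\chi)|/\Delta \quad\text{and}\quad |B^{j-1}(\chi,\uparrow)| \leq |\T^{j-1}(\chi)|/\Delta,\]
and therefore $|B^{j-1}(\chi)| \leq 2|\T^{j-1}(\chi)|/\Delta$. Combining this with the set-difference identity above yields $|\T^j(\chi)| \geq (1 - 2/\Delta)|\T^{j-1}(\chi)|$, as required.

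There is no real obstacle here; the statement is purely a bookkeeping consequence of how $\C_2^j$ was defined to exclude chains with too many bad markers. The only minor point worth double-checking is that $B^{j-1}(\chi,\uparrow)$ and $B^{j-1}(\chi,\downarrow)$ indeed sit inside $\T^{j-1}(\chi)$ (so that the removal is well-defined and the union bound truly gives the asserted fraction), which follows directly from the phrase ``members $F$ in $\T^{j-1}(\chi)$'' in their definitions.
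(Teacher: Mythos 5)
Your proof is correct and is exactly the argument the paper intends: the paper records this lemma as an immediate consequence of the definitions of $\C_2^j$ and $\T^j$, and your unpacking (set difference plus the union bound on $B^{j-1}(\chi,\downarrow)$ and $B^{j-1}(\chi,\uparrow)$, each at most $|\T^{j-1}(\chi)|/\Delta$) is precisely that consequence. No issues.
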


It remains to analyze our process for constructing $\T^j$.  For this, we develop some properties of full chains $\chi$ in $\mathcal{C}_1^j(\downarrow)$ and we will
then use these properties to show  $\sum_{\chi\in \mathcal{C}_1^j(\downarrow)}|{\mathcal{T}^{j - 1}(\chi)}|$
is relatively small; the situation for $\mathcal{C}_1^j(\uparrow)$ is similar.  For this result, given  an index $i$ and a full chain $\chi$, let $\chi_\F(i)$ be the $i$th member of $\F \cap \chi$. If no such member exists, by convention we let this denote the empty set.

\begin{lemma}\label{lem:function}
    For every $j$, there exists a function $\vec{b}$ from $ \chi \in \C^j_1(\downarrow) $ to increasing sequences of integers of even length of the form $(b_1, b_1', b_2, b_2', \dots )$ with the following properties: 
    \begin{enumerate}
        \item There exists an $i\in [q]$ such that for all $1 \leq \ell \leq \frac{|\vec{b}(\chi)|}{2}$, we have $\chi_{\F}(b_\ell) \in B^{j - 1}(\chi, i, \downarrow)$. 
        \item For all $1 \leq \ell \leq \frac{|\vec{b}(\chi)|}{2}$, we have $\chi_{\F}(b_\ell' ) \in \W(\chi_{\F}(b_\ell), i,  \T^{j - 1}[q])$.
        \item If $|\vec{b}(\chi)| = 2m$, then $|\mathcal{T}^0(\chi)|\leq K m$ and $\vec{b}(\chi) \in \binom{[Km] } {2m}$.
        \item For any vector of increasing integers $\vec{c}$ of length $2m$, there are at most $q \delta^m n!$ chains $\chi$ satisfying $\vec{b}(\chi) = \vec{c}$. 
    \end{enumerate}
\end{lemma}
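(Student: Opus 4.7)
My plan is to construct the function $\vec{b}$ via a greedy procedure, using pigeonhole to fix the level $i$ promised by property~1. For $\chi \in \C_1^j(\downarrow)$, since $B^{j-1}(\chi, \downarrow) = \bigcup_{i=1}^{q} B^{j-1}(\chi, i, \downarrow)$ and $|B^{j-1}(\chi, \downarrow)| > |\T^{j-1}(\chi)|/\Delta$, pigeonhole yields a canonical $i = i(\chi) \in [q]$ with $|B^{j-1}(\chi, i, \downarrow)| > |\T^{j-1}(\chi)|/(q\Delta)$. I then build $\vec{b}(\chi)$ by scanning the $\F$-positions of $\chi$ from left to right: starting from $p_0 = 0$, iteratively set $b_\ell$ to be the smallest $\F$-position exceeding $p_{\ell-1}$ with $\chi_\F(b_\ell) \in B^{j-1}(\chi, i, \downarrow)$, then $b_\ell'$ the smallest position exceeding $b_\ell$ with $\chi_\F(b_\ell') \in \W(\chi_\F(b_\ell), i, \T^{j-1}[q])$, and advance $p_\ell = b_\ell'$, stopping when no valid $b_\ell$ remains. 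Existence of $b_\ell'$ follows from witness property (b): the $q$-chain $Q$ in $\M^{j-1}(\chi)$ with $\chi_\F(b_\ell)$ at position $i$ must intersect $\W$ in some element $D \subsetneq \chi_\F(b_\ell)$, and $D \in \F \cap \chi$ so it has an $\F$-position exceeding $b_\ell$. Properties 1 and 2 are immediate from the construction.

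For property 4, I would fix $\vec{c}$ of length $2m$ and bound $\prob[\vec{b}(\chi_0) = \vec{c}]$ for a uniformly random full chain $\chi_0$. Union-bounding over the $q$ possible values of $i(\chi_0)$, it suffices to show that for each fixed $i$ the probability that the greedy driven by $i$ outputs $\vec{c}$ is at most $\delta^m$. Conditioning on $\chi_{0,\F}(1), \ldots, \chi_{0,\F}(c_\ell)$ fixes $\chi_{0,\F}(c_\ell)$ and determines $\W(\chi_{0,\F}(c_\ell), i, \T^{j-1}[q])$; crucially, the portion of $\chi_0$ below $\chi_{0,\F}(c_\ell)$ is then distributed as a uniformly random maximal chain from $\emptyset$ to $\chi_{0,\F}(c_\ell)$. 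Since $\W$ consists of sets contained in $\chi_{0,\F}(c_\ell)$, the event $\chi_{0,\F}(c_\ell') \in \W$ is a special case of $\chi_0 \cap \W \neq \emptyset$, so witness property (c) bounds its conditional probability by $\delta$. Telescoping the chain rule over $\ell = 1, \ldots, m$ gives $\delta^m$, and multiplying by $n! = |\C|$ and $q$ yields at most $q\delta^m n!$ chains.

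For property 3, I would combine the pigeonhole bound with the iterated shrinking from \Cref{C2Bound}, namely $|\T^{j-1}(\chi)| \geq ((\Delta-2)/\Delta)^{|P|-1} |\T^0(\chi)|$, to deduce $|B^{j-1}(\chi, i, \downarrow)| \geq ((\Delta-2)/\Delta)^{|P|-1} |\T^0(\chi)|/(q\Delta)$; since greedy termination forces all bad indices to be at most $b_m'$, this gives a first lower bound $b_m' \geq ((\Delta-2)/\Delta)^{|P|-1} |\T^0(\chi)|/(q\Delta)$. The main obstacle will be to match this with the upper bound $b_m' \leq Km$ with $K = \Delta q^2 (\Delta/(\Delta-2))^{|P|}$, i.e., to show that each iteration only consumes order $q\Delta \cdot \Delta/(\Delta-2)$ positions in $\F \cap \chi$. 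I expect this to require choosing $b_\ell'$ via a canonical tightest $q$-chain $Q(\chi_\F(b_\ell)) \in \M^{j-1}(\chi)$ whose $q-i$ members below $\chi_\F(b_\ell)$ are consecutive elements of $\T^{j-1}(\chi)$, so that $b_\ell'$ is at most the $\F$-position of the $(q-i)$-th subsequent element of $\T^{j-1}(\chi)$; combined with the shrinking from \Cref{C2Bound} and the pigeonhole factor $q\Delta$, this should produce the exact constant $K$ and yield both $|\T^0(\chi)| \leq Km$ and $\vec{b}(\chi) \in \binom{[Km]}{2m}$ simultaneously.
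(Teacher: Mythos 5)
Your proposal is correct and follows essentially the same route as the paper: pigeonhole to fix $i$, a greedy alternation between lower-bad members and their canonical witnesses, a chain-rule/conditioning argument giving $\delta^m$ per fixed $i$ for property 4, and for property 3 the observation that witness property $(b)$ applied to the $q$-chain whose lower part consists of the next $q-i$ consecutive members of $\T^{j-1}(\chi)$ forces $b_\ell'$ to lie within $q-i$ such positions of $b_\ell$. The step you flag as the "main obstacle" and only "expect" to work is precisely the paper's argument (each iteration skips at most $q-i\le q$ members of $B^{j-1}(\chi,i,\downarrow)$, so $m\ge |B^{j-1}(\chi,i,\downarrow)|/q$, which combined with the pigeonhole factor $q\Delta$ and the geometric decay from \Cref{C2Bound} yields $|\T^0(\chi)|\le Km$), so there is no gap.
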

\begin{proof}[Proof of Lemma~\ref{lem:function}]
    We begin by explicitly defining $\vec{b}(\chi)$ for each $\chi\in \mathcal{C}_1^j(\downarrow)$, and will refer to this vector as  \emph{the lower-bad profile} for $\chi$ relative to $\mathcal{M}^{j - 1}$ . 
    
    Fix some $\chi\in \mathcal{C}_1^j(\downarrow)$.  Let $F_1 \supset F_2 \dots \supset F_t$ be the members of $\mathcal{T}^0(\chi)$. Let $F_{a_1} \supset F_{a_2} \supset \dots \supset F_{a_r}$ be the subsequence of $F_1, \dots F_t$ consisting of all of the members in $\mathcal{T}^{j - 1}(\chi)$. By definition of ${\mathcal C}_1^j(\downarrow)$, we have $|B^{j -1}(\chi, \downarrow) |  > |\mathcal{T}^{j-1}(\chi)|/\Delta$. By the pigeonhole principle, there exist some $i \leq q$ such that \[|B^{j-1}(\chi, i, \downarrow)| > |\mathcal{T}^{j-1}(\chi)| /q\Delta.\]

Fix such an $i$. We will now greedily build a tuple of integers $\vec{b}(\chi)=(b_1, b_1', b_2, b_2',\dots,  b_m, b_m')$ such that for all $1 \leq \ell \leq m$, $F_{b_\ell}$ is $(i, \delta)$-lower bad relative to $\M^{j-1}$ and  $F_{b_\ell'} \in \W(F_{b_{\ell}}, i, \T^{ j -1}[q])$,   as follows.

Let $d$ be the smallest integer such that $F_{a_d}\in B^{j-1}(\chi, i, \downarrow)$; such a $d$ exists because $B^{j-1}(\chi, i, \downarrow) \neq \emptyset$. By definition, $F_{a_d}$ is the $i$th member of some marked chain $(\chi, Q)$ in $\mathcal{M}^{j - 1}(\chi)$.

In particular, this implies there are at least $q-i$ additional members of $\mathcal{T}^{j-1}(\chi)$ on $\chi$ below $F_{a_d}$. Because $F_{a_d}\in B^{j-1}(\chi,i,\downarrow)$, it is the $i$th member of some $q$-marked chain in $\mathcal{M}^{j - 1}(\chi) = \binom{\mathcal{T}^{ j - 1}(\chi)}{q}$, and in particular there exists at least $i - 1$ members which come before it, and at least $q - i$ members coming after in inside $\T^{ j - 1}(\chi)$. By property $(b)$ of witness sets we have then  $\{F_{a_{d +1}}, \dots F_{a_{d+ q - i}}\}\cap \W(F_{a_d}, i, \T^{( j - 1)}[q])\neq\emptyset$. Let $b_1 = a_d$ and let $b_1'$ be the index of the any member  $\{F_{a_{d +1}}, \dots F_{a_{d+ q - i}}\}\cap \W(F_{a_d},i, \T^{( j - 1)}[q])$.

Now let $d'$ be the smallest integer such that $F_{a_{d'}} \in B^{j-1}(\chi, i, \downarrow)$ and $a_{d'} > b_1'$ if it exists. Just as before, we are guaranteed one of  $F_{a_{d'+ 1}}, \dots F_{a_{d'+ q - i}}$ belongs to $\W(F_{a_{d'}}, i, \T^{( j - 1)}[q])$, and we let $b_2 = a_{d' }$ and $b_2'$ be the index of the any member of $\W(F_{a_{d'}}, i, \T^{( j - 1)}[q]) \cap \{ F_{a_{d'+ 1}}, \dots F_{a_{d'+ q - i}}\} $. We continue to repeat the process, e.g.\ by defining $d''$ to be the smallest integer such that $F_{a_{d''}} \in B^{j-1}(\chi, i, \downarrow)$ and $a_{d''} > b_2'$, if it exists, until no more choices remain. Note that the process goes on at least $\frac{1}{q - i} | B^{j - 1}(\chi, i, \downarrow)|$ many steps, as between $F_{b_i}$ and $F_{b_i'}$ there are at most  $q - i$ members of $B^{j - 1}(\chi, i, \downarrow)$. Furthermore, $\vec{b}(\chi) \subseteq [| \mathcal{T}^0(\chi)|]$.

\begin{claim}\label{sizeofint}
    If $\vec{b}(\chi)$ has length $2m$, then $|\mathcal{T}^0(\chi)|\leq K m$. 
\end{claim}
\begin{proof}
    
    By our assumption of $\chi\in {\mathcal C}_1^j(\downarrow)$, we must have $\chi\in {\mathcal C}^{\ell}_2$ for $\ell=0,1,\dots, j-1$ and $\chi\in {\mathcal C}_1^j(\downarrow)$.
   Therefore, for $1 \leq \ell \leq j - 1$, we know that $|\mathcal{T}^{\ell}(\chi) |\geq (1 - \frac{2}{\Delta})|\mathcal{T}^{\ell -1}(\chi)|$, and hence $|\mathcal{T}^0(\chi)| \leq (\frac{\Delta}{\Delta - 2})^{j}|\mathcal{T}^{j - 1}(\chi)|$. 
   By definition of $\chi \in \mathcal{C}_{1}^j(\downarrow)$, we know that  $|B^{j - 1}(\chi, \downarrow)| \geq |{\mathcal T}^{j-1}(\chi)|/\Delta$. By our choice of $i$,  $|B^{j - 1}(\chi, i, \downarrow) |\geq \frac{1}{q} |B^{j - 1}(\chi, \downarrow)|$. 
    By our earlier observation, we have that $m \geq \frac{1}{q - i} | B^{j - 1}(\chi, i, \downarrow)|$. 
    It therefore follows that $$|\mathcal{T}^0(\chi)|\leq  \left(\frac{\Delta}{\Delta-2}\right)^{|P|} \Delta |B^{j - 1}(\chi, \downarrow)|  \leq  \Delta q^2 \left( \frac{\Delta}{\Delta - 2} \right)^{|P|} m\leq K m$$.
\end{proof}
\begin{claim}\label{badtuples}
Let $\chi\in \mathcal{C}_1^j(\downarrow)$. Suppose $\vec{b}(\chi)$ has length $2m$. Then $\vec{b}(\chi) \in \binom{[Km] } {2m}$. 
\end{claim}

\begin{proof}
    Since $\vec{b}(\chi)$ has length $2m$,  by Claim~\ref{sizeofint}, $t:=|\mathcal{T}^0(\chi)| \leq K m$. Since the entries in $(b_1,b'_1,\dots, b_m, b'_m)$ are all inside
$\{1,2,\dots ,t\}\subseteq [Km]$, we have $\vec{b}(\chi) \in \binom{[Km] } {2m}$. \end{proof}

To prove the last part of the lemma, we will use the following technical result, where here we roughly think of $\mathcal{S}$ as the $F$ which are $(i,\delta)$-lower bad and $\W^*(F)=\W(F,i,\T^{j-1}[q])$.
\begin{claim}\label{numchains}
Let $\chi$ be a uniformly randomly chosen full chain in $\mathcal{B}_n$. Let $\mathcal{S} \subseteq \F$ be such that for every $F \in \mathcal{S}$, there is a subfamily $\W^*(F) \subseteq \B_n$ such that $\prob[\W^*(F) \cap \chi \neq \emptyset | F \in \chi] \leq \delta$. Then given an increasing tuple $\Vec{c} =(c_1, c_1', c_2, c_2' \dots, c_m, c_m')$, the probability that $\chi$ satisfies $\chi_{\F}(c_\ell) \in \mathcal{S}$ and $\chi_\F(c_\ell') \in \W^*(\chi_{\F}(c_i))$ for all $1 \leq \ell \leq m$ is less than $\delta^m$. 
\end{claim}

\begin{proof}
Let $A_\ell$ be the event that $\chi_{\F}(c_\ell) \in \mathcal{S}$ and $\chi_\F(c_\ell') \in \W^*(\chi_{\F}(b_\ell))$. Then,
\begin{align*}
    \prob[A_1\cap A_2 \cap \dots \cap  A_m] &= \prob[A_1] \prob[A_2 | A_1] \dots \prob[A_m  | A_1\cap A_2 \cap \dots\cap  A_{m - 1}].
\end{align*}
Note that
$$\prob [\chi_\F(b_\ell') \in \W^*(F) | (\chi_{\F} (c_\ell) = F)\cap  A_1\cap  A_2\cap \dots \cap A_{ \ell - 1} ] = \prob [\chi_\F(b_\ell') \in \W^*(F) | \chi_{\F} (c_\ell) = F   ] ,$$
 since conditioned on $\chi_{\F} (c_\ell) = F $ the events $\chi_\F(c_i') \in \W^*(F)$  and $A_1\cap A_2 \cap \dots A_{\ell-1}$ are independent.

Thus, by definition of $\W^*(F)$,  we have 
\begin{align*}
    \prob [\chi_\F(b_\ell') \in \W^*(F) | \chi_{\F} (c_i) = F)  ] &\leq \prob[ \chi \cap \W^*(F) \neq \emptyset | \chi_{\F} (c_\ell) = F)]\\
    &\leq \delta
\end{align*}
Observe that $A_\ell$ is exactly the union over the events $\chi_{\F} (c_\ell) = F$ over $F \in \mathcal{S}$ and $\chi_{\F}(c_\ell') \in \W^*(F)$. Since these events are disjoint, we have that the following series of inequalities hold:
\begin{align*}
    \prob[A_\ell| A_1 \cap A_2 \cap \dots \cap A_{ \ell - 1}] &= \sum_{F \in \mathcal{S}} \Big( \prob[\chi_{\F} (c_\ell) = F | A_1\cap A_2 \cap \dots A_{ \ell - 1}] \\ & \cdot \prob [\chi_\F(c_\ell') \in \W^*(F) | (\chi_{\F} (c_i) = F)\cap A_1\cap  A_2\cap \dots A_{ \ell - 1} ] \Big)\\ &\leq  \sum_{F \in \mathcal{S}} \prob[\chi_{\F} (c_\ell) = F | A_1 \cap A_2 \cap \dots A_{ \ell - 1}] \delta \\
    &\leq \delta. 
\end{align*}

Therefore, 
\begin{align*}
    \prob[A_1\cap A_2 \dots A_m] &= \prob[A_1] \prob[A_1 | A_2] \dots \prob[A_m  | A_1\cap A_2 \cap \dots A_{m - 1}] \\
    &\leq \delta^m.
\end{align*} 
\end{proof}

We now prove the last part of the lemma. Note that for each chain $\chi \in\C_1^j(\downarrow)$ which satisfies $\vec{b}(\chi)=\vec{c}$, all of the sets of the form $\chi_{\F}(c_\ell)$ are $(i, \delta)$-lower bad for some $i\in [q]$ with $\chi_{\F}(c'_\ell)$ in the corresponding set $\W(F,i,\T^{j-1}[q])$. Thus, applying Lemma~\ref{numchains} with $\mathcal{S}=\bigcup_{\chi\in \C_1^j(\chi)} B^{j-1}(\chi,i,\downarrow)$  the set of $(i, \delta)$-lower bad members, and taking $\W^*(F) = \W(F, i, \T^{j - 1}[q])$, we have that $\chi$ satisfies the conclusion of Lemma~\ref{numchains}. Thus, there are no more than $\delta^m n!$ such chains. Summing over all $i$, there are at most $q \delta^m n!$ many chains satisfying $\vec{b}(\chi) = \vec{c}$.

\end{proof}

\begin{lemma}\label{downweight} For each $1\leq j \leq |P|$,
 $$  \sum_{\chi\in \mathcal{C}_1^j(\downarrow)}{|\mathcal{T}^{j - 1}(\chi)|} \leq \frac{\ve}{18  |P|} n!. $$
\end{lemma}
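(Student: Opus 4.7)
The plan is to convert the pointwise control provided by Lemma~\ref{lem:function} into a global bound, by double counting chains through the map $\vec b$ and stratifying by the length of $\vec b(\chi)$.

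First I would note that for any $\chi\in \mathcal{C}_1^j(\downarrow)$, the tuple $\vec b(\chi)$ has positive even length: since $\chi\in\mathcal{C}_1^j(\downarrow)$, we have $B^{j-1}(\chi,\downarrow)\neq\emptyset$, so the greedy construction in the proof of Lemma~\ref{lem:function} produces at least one pair $(b_1,b_1')$. The central observation is then that property~(3) gives a bound on $|\T^{j-1}(\chi)|$ in terms of the length of $\vec b(\chi)$: if $|\vec b(\chi)|=2m$, then
\[
|\T^{j-1}(\chi)| \;\le\; |\T^0(\chi)| \;\le\; K m.
\]
So partitioning $\mathcal{C}_1^j(\downarrow)$ according to $m$, I can write
\[
\sum_{\chi\in \mathcal{C}_1^j(\downarrow)} |\T^{j-1}(\chi)| \;\le\; \sum_{m\ge 1} K m \cdot \bigl|\{\chi\in \mathcal{C}_1^j(\downarrow) : |\vec b(\chi)|=2m\}\bigr|.
\]

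Next I would count the number of chains with $|\vec b(\chi)|=2m$. By property~(3), $\vec b(\chi)$ lies in $\binom{[Km]}{2m}$, and for each fixed tuple $\vec c\in \binom{[Km]}{2m}$ property~(4) says there are at most $q\delta^m n!$ chains with $\vec b(\chi)=\vec c$. So
\[
\bigl|\{\chi : |\vec b(\chi)|=2m\}\bigr| \;\le\; \binom{Km}{2m} \cdot q\delta^m n!.
\]
Combining the two displays and using the crude bound $\binom{Km}{2m}\le (eK/2)^{2m}$ yields
\[
\sum_{\chi\in \mathcal{C}_1^j(\downarrow)} |\T^{j-1}(\chi)| \;\le\; qKn!\sum_{m\ge 1} m \left(\tfrac{eK}{2}\right)^{2m}\delta^m \;=\; qKn!\sum_{m\ge 1} m \left(\tfrac{e^2K^2}{4}\,\delta\right)^m.
\]

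Finally, I would choose $\delta$ at the very start of the whole cleaning argument small enough (as a function of $\ve$, $q$, and $|P|$, noting that $K$ depends only on $q$ and $|P|$) so that $r:=\frac{e^2K^2}{4}\delta<\frac12$, in which case $\sum_{m\ge 1}mr^m = r/(1-r)^2$ can be made as small as desired. Specifically, taking $\delta$ small enough that
\[
qK\cdot\frac{r}{(1-r)^2} \;\le\; \frac{\ve}{18|P|}
\]
gives exactly the bound claimed. There is no genuine obstacle here — the work has already been done inside Lemma~\ref{lem:function}; the step that merits care is simply ensuring the choice of $\delta$ in Theorem~\ref{thm:weakCleaning} is made consistently with this tail-sum requirement, and that the same $\delta$ simultaneously works for the (symmetric) upper analogue that will bound $\sum_{\chi\in \mathcal{C}_1^j(\uparrow)} |\T^{j-1}(\chi)|$ in the next step.
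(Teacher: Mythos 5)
Your proposal is correct and follows essentially the same route as the paper: stratify the chains in $\mathcal{C}_1^j(\downarrow)$ by the length $2m$ of their bad profile, bound $|\T^{j-1}(\chi)|\le|\T^0(\chi)|\le Km$ via Lemma~\ref{lem:function}.3, count the chains per profile via Lemma~\ref{lem:function}.4, and sum the resulting series by taking $\delta$ sufficiently small. The only cosmetic difference is the crude bound used for $\binom{Km}{2m}$ (the paper uses $2^{Km}$, you use $(eK/2)^{2m}$), which does not affect the argument.
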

\begin{proof}
Let $\vec{b}$ be the function from Lemma~\ref{lem:function}. 

Recall that Lemma~\ref{lem:function}.3  states that for all $m\in [n]$, each bad profile of length $2m$ is a member of $\binom{[Km]}{2m}$. 
This together with part Lemma~\ref{lem:function}.4 implies 

 \begin{align*} \sum_{\chi\in \mathcal{C}_{1}^j(\downarrow)} |\mathcal{T}^{j - 1}(\chi)| & \leq \sum_{m=1}^n\sum_{\vec{c} \in \binom{[K m]}{2m}} \sum_{ \chi : \vec{b}(\chi) = \vec{c}} |\mathcal{T}^{j - 1}(\chi)|\\
 &\leq \sum_{m=1}^n\sum_{\vec{c} \in \binom{[K m]}{2m}} \sum_{\chi : \vec{b}(\chi) = \vec{c}} |\mathcal{T}^{0}(\chi)| \\
 &\leq \sum_{m=1}^n\sum_{\vec{c} \in \binom{[K m]}{2m}} \sum_{  \chi : \vec{b}(\chi) = \vec{c}} K m & \text{ (By Lemma~\ref{lem:function}.3)}\\
 &\leq \sum_{m=1}^n\sum_{\vec{b} \in \binom{[K m]}{2m}}   (\delta^m n!) q Km \\
 &\leq q \sum_{m=1}^n2^{Km} K^m \delta^m n!\\
 &\leq 2^{K + 1} K q \delta n!,
  \end{align*}
provided that $\delta < \frac{1}{2^{K + 2} K}$. 
This will be less than $\frac{\ve}{18  |P|}$ provided that $\delta < \frac{\ve}{18  |P|2^{ K + 1} K q}$. 
\end{proof}

By a similar argument, 
\begin{lemma}\label{upweight} For each $1\leq j \leq |P|$,
 $$  \sum_{\chi\in \mathcal{C}_1^j(\uparrow)}{\mathcal{T}^{j - 1}(\chi)} \leq \frac{\ve}{18 |P|} n!. $$
\end{lemma}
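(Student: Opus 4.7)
The proof mirrors that of \Cref{downweight}, replacing downward/lower witnesses with upward/upper witnesses throughout. The plan is to define an analogous \emph{upper-bad profile} function $\vec{b}$ on $\mathcal{C}_1^j(\uparrow)$, establish versions of parts (1)--(4) of \Cref{lem:function}, and then run verbatim the counting estimate that concluded the proof of \Cref{downweight}.

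Concretely, I would enumerate $\mathcal{T}^0(\chi) = \{F_1 \supset F_2 \supset \cdots \supset F_t\}$ and pigeonhole an index $i \in [q]$ for which $|B^{j-1}(\chi, i, \uparrow)| > |\mathcal{T}^{j-1}(\chi)| / (q\Delta)$. I would then greedily construct an increasing tuple $(b_1, b_1', b_2, b_2', \ldots, b_m, b_m')$, but with the roles of the two coordinates swapped compared to the downward case: $b_\ell'$ (the larger index) would be the index of an $(i, \delta)$-upper bad member $F$, and $b_\ell$ (the smaller index) would be the index of a member $W \in \widehat{\W}(F, i, \mathcal{T}^{j-1}[q])$ lying above $F$ on $\chi$. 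Such a witness exists because $F$ being the $i$-th member of a $q$-chain in $\mathcal{M}^{j-1}(\chi)$ forces at least $i - 1$ members of $\mathcal{T}^{j-1}(\chi)$ to lie above $F$ on $\chi$, at least one of which must belong to $\widehat{\W}(F, i, \mathcal{T}^{j-1}[q])$ by property $(\hat{b})$. The analogues of Claims \ref{sizeofint} and \ref{badtuples} go through by the same arguments as before, yielding parts (1)--(3) of the upward analogue of \Cref{lem:function}.

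The main obstacle is establishing the analogue of \Cref{numchains} for the upper case, since the natural conditioning argument from the downward case breaks: if one conditions on $\chi_\F(b_\ell') = F$ (the bad element), the prior events $A_1, \ldots, A_{\ell - 1}$ involve strictly smaller indices and thus pertain to the portion of $\chi$ above $F$; but the current event's witness index $b_\ell < b_\ell'$ also lies above $F$, so prior events and the current witness event need not be independent. The fix is to process the events in \emph{reverse} order, computing $\prob[A_\ell \mid A_{\ell+1}, \ldots, A_m]$ by conditioning on $\chi_\F(b_\ell') = F$. Since later events involve indices strictly larger than $b_\ell'$, they concern only the portion of $\chi$ below $F$, which is independent of the portion above $F$ given $F \in \chi$. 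The current event's witness claim $\chi_\F(b_\ell) \in \widehat{\W}(F)$ concerns the portion above $F$, so it is independent of the later events and has conditional probability at most $\delta$ by property $(\hat{c})$. Multiplying these bounds gives the desired $\delta^m$ upper bound.

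With the upward analogue of \Cref{lem:function} in hand, the same chain of estimates as in the proof of \Cref{downweight}, using the same value of $\delta$, yields $\sum_{\chi \in \mathcal{C}_1^j(\uparrow)} |\mathcal{T}^{j-1}(\chi)| \leq \frac{\ve}{18|P|} n!$, completing the proof.
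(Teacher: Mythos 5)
Your overall plan — mirror the proof of \Cref{downweight} with upper witnesses in place of lower ones — is the intended ``similar argument,'' and you correctly locate the one genuinely delicate point: the conditioning in \Cref{numchains} exploits the fact that, in the downward case, the conditioning event $\{\chi_{\F}(c_\ell)=F\}$ constrains only the portion of $\chi$ \emph{above} $F$ while the witness set lies \emph{below} $F$. However, your proposed repair does not close the gap. Reversing the order of the events does restore independence of the current witness event from the \emph{other} events $A_{\ell+1},\dots,A_m$, but the final step still requires
$\prob[\chi_{\F}(b_\ell)\in\widehat{\W}(F)\mid \chi_{\F}(b_\ell')=F]\le\delta$,
and this does \emph{not} follow from property $(\hat c)$. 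Property $(\hat c)$ bounds $\prob[\chi\cap\widehat{\W}\neq\emptyset\mid F\in\chi]$, whereas the event $\{\chi_{\F}(b_\ell')=F\}$ additionally pins down the number of members of $\F$ lying on $\chi$ above $F$ (since the indexing is from the top). Because $\widehat{\W}$ also lives above $F$, this extra conditioning is correlated with hitting $\widehat{\W}$ and can inflate the conditional probability far beyond $\delta$ — for instance, if the members of $\F$ above $F$ all contain some $G\in\widehat{\W}$, then conditioning on $b_\ell'-1\ge 1$ of them lying on $\chi$ can force $\chi\cap\widehat{\W}\neq\emptyset$ even though the probability conditioned merely on $F\in\chi$ is tiny. (In the downward case this issue is invisible precisely because the conditioning and the witness sit on opposite sides of $F$.)

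The correct fix is to change the indexing rather than the order of conditioning: in the upward case, enumerate $\F\cap\chi$ from the \emph{bottom}, so that $\{\chi_{\F}^{\downarrow}(c_\ell)=F\}$ constrains only the portion of $\chi$ below $F$, the previously processed events concern smaller bottom-up indices (also below $F$), and the witness event concerns the portion above $F$; then property $(\hat c)$ applies verbatim and the product bound $\delta^m$ goes through exactly as in \Cref{numchains}. Equivalently, one can apply the complementation bijection $S\mapsto[n]\setminus S$, which reverses inclusion, preserves $\wB_n$ and the uniform measure on full chains, and turns upper witnesses into lower witnesses, and then quote \Cref{lem:function} and \Cref{downweight} directly. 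With that change, the rest of your outline (the greedy construction of the profile, the analogues of \Cref{sizeofint} and \Cref{badtuples}, and the concluding summation) is sound, modulo the minor point that consecutive bad elements chosen in the greedy step must be spaced so that each new witness index exceeds the previous bad index, which costs only a factor of at most $q$ in the lower bound on $m$ and is absorbed by the constant $K$.
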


We are now in a position to prove our main theorem in this section.

\begin{proof}[Proof of Theorem \ref{thm:weakCleaning}]
    For each $j\in |P|$,
when building $\mathcal{T}^{j}$ from $\T^{j-1}$ we made sure to remove all the members that are either $(i, \delta)$-lower-bad or $(i, \delta)$-upper-bad relative to $\T^{j-1}[q]$ for any $i\in [q]$. So every member $F$ on a $q$-chain in $\mathcal{T}^j[q]$ is $\delta$-robust with respect to $\mathcal{T}^{j-1}[q]$.

We will inductively prove for all $0\le j\le |P|$ that
\[\T^j \text{ is $q$-strong and }|\T^j|\ge \left(1-\frac{1}{3|P|}\right)^{j}|\T^0|,\]
The base case holds by our conditions.
Assume that we have proven the statements for all $\ell\leq j-1$, which in particular means $|\T^{j-1}|\ge \frac{2}{3}\ve n!$ since $|\T^0|\ge \ve n!$ by hypothesis.
Consider any $j\in [|P|]$. Let $\chi$ be a full chain in $\B_n$
with $\T^j(\chi)\neq \emptyset$. Then by our process, this means that $\chi\in \C^\ell_2$ for
each $\ell=1,\dots, j-1$. If $\T^j(\chi)=\T^{j-1}(\chi)$, then by induction hypothesis, $|\T^j(\chi)|\geq q$.
Hence, we may assume that $\T^{j-1}(\chi)$ contains at least one bad member. If $|\T^{j-1}(\chi)|<\Delta$,
then $\chi$ would have been in $\C^j_1$, a contradiction. So $|\T^{j-1}(\chi)|\geq \Delta$. By definition of
$\chi\in \C^j_2$, we have by \Cref{C2Bound} that \[|\T^j(\chi)|\geq \left(1-\frac{2}{\Delta}\right)|\T^{j-1}(\chi)|\geq \Delta-2\geq q.\] 

Thus, $\T^{(j)}$ is $q$-strong. Furthermore,  
\begin{align*}
 |\T^j|&= \sum_{\chi \in \mathcal{B}_n}|\mathcal{T}^j(\chi)| \geq \sum_{\chi \in \mathcal{B}_n}|\mathcal{T}^{j - 1}(\chi)| - \sum_{\chi \in \mathcal{C}_2^j} |B^{j - 1}(\chi) |- \sum_{\chi \in C_1^j(\downarrow)}|\mathcal{T}^{j - 1}(\chi)| - \sum_{\chi \in C_1^j(\uparrow)}|\mathcal{T}^{j - 1}(\chi) |\\
    &\geq \sum_{\chi \in \mathcal{B}_n}|\mathcal{T}^{j - 1}(\chi)| - \frac{2}{\Delta}  \sum_{\chi \in \mathcal{B}_n}|\mathcal{T}^{j - 1}(\chi)| - \frac{\ve}{9 |P|} n! \quad \text{ (By Lemma~\ref{downweight} and Lemma~\ref{upweight})}\\
    &\geq|\mathcal{T}^{ j - 1}| - \frac{1}{3  |P|}|\mathcal{T}^{ j - 1}|,
\end{align*}

where in this last step used $\Delta\ge 12|P|$ and that inductively $\frac{2}{3} \ve n!\leq |\T^{j-1}|$.

Thus, we have for all $1 \leq j \leq |P|$, $$|\mathcal{T}^{j}| \geq \left(1 - \frac{1}{3|P|}\right)^{|P|} |\mathcal{T}^{0}|  \geq \frac{2}{3} |\mathcal{T}^{0}|\ge \frac{2 \ve}{3}n!, $$
completing the proof. 
\end{proof}

Finally, we combine all of the results we have established up to this point into a single statement. Also note that the second part of this result is not needed for our proofs, but we include it in the statement since it adds no extra difficulty to the proof and may be useful for future applications. %Recall that $\mu(\F)=\sum_{F\in \F} \frac{1}{{n\choose |F|}}$ denotes the Lubell weight of $\F$, that \[\mathcal{L}^i(\M) = \{ D \in \mathcal{B}_n : D \text{ is  the } i\text{th member of a } q \text{-chain in } \M \},\] and the definition of $\delta$-robustness defined at the start of this subsection.

\begin{theorem}\label{thm:cleaning}
    For all integers $q\ge 1$ and reals $\ve>0$, there exists some $\del>0$ such that the following holds.  If $\F\sub\wB_n$ is a family with $\mu(\F)\ge q-1+\ve$, then there exists a nested sequence of $q$-marked chains $\M^{|P|}\sub \M^{|P|-1}\sub \cdots \sub \M^0$ such that for all $j\in [|P|]$ we have:
    \begin{itemize}
        \item For each $(\chi,Q)\in \M^j$ and $F\in Q$, we have that $F$ is $\delta$-robust with respect to $\M^{j-1}$.
        \item For each $i\in [q],$ we have 
        \[|\mathcal{L}^i(\M^j)|\ge \frac{2\ve}{3q} \min_{F\in \F}{n\choose |F|}.\]
    \end{itemize}
\end{theorem}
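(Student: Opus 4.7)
The plan is to assemble \Cref{thm:cleaning} essentially as a packaging of the preliminary results of this section, with \Cref{thm:weakCleaning} doing the heavy lifting. Given $\F\subseteq\wB_n$ with $\mu(\F)\geq q-1+\varepsilon$, the first step is to invoke \Cref{size-to-weight} to produce the initial $q$-strong $1$-marked chain family
\[\T^0 = \{(\chi,F) : \chi\in\C,\ F\in\chi\cap\F,\ |\chi\cap\F|\geq q\},\]
which has $|\T^0|\geq \varepsilon\, n!$. This translates the Lubell-weight hypothesis on $\F$ into the size-based hypothesis required by \Cref{thm:weakCleaning}.

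Next, I would apply \Cref{thm:weakCleaning} with the same $\varepsilon$ and the $\delta$ produced by that theorem, yielding a nested sequence $\T^{|P|}\subseteq \T^{|P|-1}\subseteq\cdots\subseteq \T^0$ of $q$-strong $1$-marked chain families such that every $F$ lying on a $q$-chain of $\T^j[q]$ is $\delta$-robust with respect to $\T^{j-1}[q]$, and $|\T^{|P|}|\geq \tfrac{2\varepsilon}{3}n!$. I would then simply \emph{define}
\[\M^j := \T^j[q] \quad \text{for } j=0,1,\dots,|P|.\]
Since $\T^{j}\subseteq \T^{j-1}$ implies $\T^{j}[q]\subseteq \T^{j-1}[q]$, the resulting $\M^{|P|}\subseteq\cdots\subseteq \M^0$ is nested, and the first bullet (robustness of every $F$ on a chain of $\M^j$ with respect to $\M^{j-1}$) is precisely conclusion (2) of \Cref{thm:weakCleaning}.

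For the second bullet, the key observation is that the inductive bound established inside the proof of \Cref{thm:weakCleaning} actually gives $|\T^j|\geq (1-\tfrac{1}{3|P|})^j|\T^0|\geq \tfrac{2}{3}|\T^0|\geq \tfrac{2\varepsilon}{3}n!$ for every $j\leq |P|$, not merely for $j=|P|$. Hence each $\T^j$ is $q$-strong of size at least $\tfrac{2\varepsilon}{3}n!$, and applying \Cref{lem:translatingWeight} to $\T^j$ with parameter $\tfrac{2\varepsilon}{3}$ yields
\[|\cL^i(\M^j)|\geq \frac{2\varepsilon}{3q}\min_{F\in\F}\binom{n}{|F|}\]
for every $i\in[q]$ and every $j$, as required.

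Because all of the substantial work — the greedy construction of bad profiles, the Chernoff-style counting of chains with a given profile, and the accounting that keeps $|\T^j|$ from collapsing — is already encapsulated in \Cref{thm:weakCleaning}, there is no real obstacle here; the only minor point to verify is the uniform lower bound $|\T^j|\geq \tfrac{2\varepsilon}{3}n!$ for all $j$ (not just the last one), which is implicit in the inductive step of that proof and needs only to be explicitly recorded before plugging into \Cref{lem:translatingWeight}.
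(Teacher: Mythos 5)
Your proposal is correct and matches the paper's proof essentially verbatim: invoke \Cref{size-to-weight} to get $\T^0$, apply \Cref{thm:weakCleaning}, set $\M^j:=\T^j[q]$, and feed the size bound into \Cref{lem:translatingWeight}. The only (immaterial) difference is that the paper gets the uniform bound $|\T^j|\ge \tfrac{2\ve}{3}n!$ directly from the nesting $\T^{|P|}\subseteq\T^j$ together with conclusion (3) of \Cref{thm:weakCleaning}, rather than by re-examining the inductive step as you do.
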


\begin{proof}
    By \Cref{size-to-weight} there exists a $q$-strong 1-marked family $\T^0$ from $\F$ with $|\T^0|\ge \ve n!$.  We can thus apply \Cref{thm:weakCleaning} with this $\T^0$ to obtain a nested sequence $\T^{|P|}\sub \cdots \sub \T^0$ of $q$-strong 1-marked chain families such that property 1 holds for $\M^j:=\T^j[q]$.  Moreover, \Cref{thm:weakCleaning} guarantees $|\T^j|\ge |\T^{|P|}|\ge \frac{2\ve }{3} n!$, so applying  \Cref{lem:translatingWeight} to $\T^j$ gives property 2, completing the proof.
\end{proof}

\subsection{Tools for Induced Posets}\label{sec:tools}

Throughout this subsection, let $P$ be a fixed tree poset of height $k \leq q$.  Given a set $F \in \B_n$, we let $U(F) = \{ S \in \B_n : S \supseteq F\}$ and $D(F) = \{ S \in \B_n : S \subseteq F\}$. If $\mathcal{S}$ is a subfamily of $B_n$ we let 
$$U(\mathcal{S}) = \bigcup_{S \in \mathcal{S}} U(S) \text{ and } D(\mathcal{S}) = \bigcup_{S \in \mathcal{S}} D(S)$$

Let $$\mathrm{Comp}(\mathcal{S})= U(\mathcal{S}) \cup D(\mathcal{S}).$$ Note that $\mathrm{Comp}(\mathcal{S})$ is the set of members of $\B_n$ which comparable to some member of $\mathcal{S}$. 

Furthermore, given a set $F$ and family $\mathcal{S}$ such that $\mathcal{S} \cap U(F) = \emptyset$, we set: 

$$D^*(F, \mathcal{S}) = (D(F) \setminus \{F\}) \cap \mathrm{Comp}(\mathcal{S}) \cap \wB_n$$

and if $\mathcal{S} \cap D(F) = \emptyset$

$$U^*(F, \mathcal{S}) = (U(F) \setminus \{F\}) \cap \mathrm{Comp}(\mathcal{S}) \cap \wB_n$$

We call these sets \textit{the forbidden neighborhood} of $\mathcal{S}$ with respect to $F$. 
We note that these notions are needed only for the induced part of our proof.
To that end, we need two lemmas from \cite{BJ}.

\begin{lemma}\label{downset}[Lemma 3.1 in \cite{BJ}]
    Let $F \in \wB_n, \mathcal{S} \subseteq \wB_n$ where $\mathcal{S} \cap U(F) = \emptyset$ and $|\mathcal{S}| \leq n / 6$. Let $\chi$ be a uniformly random full chain in $\C$. Then, 
    $$\prob[ \chi \cap D^*(F, \mathcal{S} ) \neq \emptyset | F \in \chi] \leq \frac{39 |\mathcal{S} | \sqrt{n \ln n}}{n}$$
\end{lemma}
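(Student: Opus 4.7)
My plan is to apply a union bound over $S\in\mathcal{S}$ using the decomposition $D^*(F,\mathcal{S})=\bigcup_{S\in\mathcal{S}} D^*(F,\{S\})$, so it suffices to establish a per-$S$ bound of order $\sqrt{n\ln n}/n$. To handle a single $S$, I would parametrise the downward portion of the random chain $\chi$ (conditioned on $F\in\chi$) by a uniformly random ordering $a_1,\ldots,a_{|F|}$ of the elements of $F$, so that $D_t := F\setminus\{a_1,\ldots,a_t\}$ is the element of $\chi$ of size $|F|-t$. Since $F\in\wB_n$ forces $|F|<n/2+2\sqrt{n\ln n}$, the only $t\ge 1$ for which $D_t$ can lie in $\wB_n$ belong to a short window $I=[1,t_{\max}]$ with $t_{\max}:=|F|-n/2+2\sqrt{n\ln n}\le 4\sqrt{n\ln n}$.

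Next I would split $D^*(F,\{S\})$ into an \emph{upper piece} $\{D:S\subseteq D\subsetneq F\}\cap\wB_n$ (empty unless $S\subsetneq F$, since $\mathcal{S}\cap U(F)=\emptyset$ rules out $S\supseteq F$) and a \emph{lower piece} $\{D:D\subseteq S\}\cap\wB_n$. The crucial observation is monotonicity in $t$: the event $\{D_t\supseteq S\}$ is decreasing in $t$, while $\{D_t\subseteq S\}$ is increasing in $t$. Hence ``some $t\in I$ gives $D_t$ in the upper piece'' collapses to the single event $\{D_1\supseteq S\}$, and ``some $t\in I$ gives $D_t$ in the lower piece'' collapses to $\{D_{t_{\max}}\subseteq S\}$. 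Since $\{a_1,\ldots,a_t\}$ is a uniformly random $t$-subset of $F$, the two relevant probabilities are
\[
\prob[D_1\supseteq S\mid F\in\chi]=\frac{|F\setminus S|}{|F|},\qquad \prob[D_{t_{\max}}\subseteq S\mid F\in\chi]=\binom{t_{\max}}{|F\setminus S|}\Big/\binom{|F|}{|F\setminus S|}.
\]

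Both quantities are $O(\sqrt{n\ln n}/n)$ for essentially the same reason: the ratio $|F\setminus S|/|F|$ is small. When $S\subsetneq F$, the assumption $S\in\wB_n$ forces $|F\setminus S|=|F|-|S|\le 4\sqrt{n\ln n}$, so the first probability is at most $8\sqrt{n\ln n}/n$, and the second, bounded by $(t_{\max}/|F|)^{|F\setminus S|}$, is at most $8\sqrt{n\ln n}/n$ whenever $|F\setminus S|\ge 1$. When $S$ is incomparable to $F$, only the lower piece contributes, and $|F\setminus S|\ge 1$ again gives the desired bound. Summing this per-$S$ estimate over $\mathcal{S}$ yields the claimed $O(|\mathcal{S}|\sqrt{n\ln n}/n)$ inequality, with the explicit constant $39$ absorbing lower-order corrections when passing from ratios of binomial coefficients to the leading approximations above. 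The hypothesis $|\mathcal{S}|\le n/6$ plays essentially no role in the probability estimate itself and is presumably included to keep certain denominators away from degenerate regimes and to ensure the bound remains nontrivial. The step I expect to need the most care is verifying the inequality $\binom{t_{\max}}{m}/\binom{|F|}{m}\le (t_{\max}/|F|)^m$ uniformly across the edge case where $F$ sits at the top of $\wB_n$ and $|S\cap F|$ is very close to $|F|$; once that is handled cleanly, the rest is routine binomial arithmetic built on top of the monotonicity collapse.
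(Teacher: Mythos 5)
Your argument is correct. Note that the paper does not prove this statement at all --- it is imported verbatim as Lemma 3.1 of \cite{BJ} --- so there is no internal proof to compare against; what you have written is a valid self-contained derivation. The two pillars of your argument both check out: (i) conditioned on $F\in\chi$, the portion of $\chi$ below $F$ is generated by a uniform random deletion order of the elements of $F$, and since $F\in\wB_n$ only the first $t_{\max}\le 4\sqrt{n\ln n}$ deletions can land in $\wB_n$; (ii) the monotonicity collapse reduces hitting the upper and lower pieces of $D^*(F,\{S\})$ to the two single events $\{D_1\supseteq S\}$ and $\{D_{t_{\max}}\subseteq S\}$, whose probabilities are $|F\setminus S|/|F|$ and $\binom{t_{\max}}{m}/\binom{|F|}{m}\le(t_{\max}/|F|)^m$ with $m=|F\setminus S|\ge 1$ (the case $m=0$ being excluded by $\mathcal{S}\cap U(F)=\emptyset$). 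Both are $O(\sqrt{n\ln n}/n)$ with constant well under $39/2$, and the union bound over $\mathcal{S}$ finishes. Your observation that $|\mathcal{S}|\le n/6$ is not needed for this route is also accurate; that hypothesis is an artifact of the bookkeeping in \cite{BJ}, which bounds the relative Lubell weight of $D^*(F,\mathcal{S})$ level by level rather than working with the deletion process directly --- the same underlying fact (the chain spends only $O(\sqrt{n\ln n})$ steps of its descent inside $\wB_n$) drives both arguments.
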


\begin{lemma}\label{upset}[Lemma 3.2 in \cite{BJ}]
        Let $F \in \wB_n, \mathcal{S} \subseteq \wB_n$ where $\mathcal{S} \cap D(F) = \emptyset$ and $|\mathcal{S}| \leq n / 6$. Let $\chi$ be a uniformly random full chain in $\C$. Then, 
    $$\prob[ \chi \cap U^*(F, \mathcal{S} ) \neq \emptyset | F \in \chi] \leq \frac{39 |\mathcal{S} | \sqrt{n \ln n}}{n}$$
\end{lemma}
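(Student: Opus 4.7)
My first instinct is that \Cref{upset} is simply the complementation dual of \Cref{downset}, so rather than redo the analysis I would just invoke that duality. Consider the order-reversing involution $c : \B_n \to \B_n$ given by $c(X) = [n] \setminus X$. It maps $U(F)$ to $D(c(F))$, maps $D(F)$ to $U(c(F))$, preserves comparability, preserves $\wB_n$ (since $|c(X)| - n/2| = ||X| - n/2|$), and sends a uniformly random full chain $\chi$ to a uniformly random full chain $c(\chi) := \{c(X) : X \in \chi\}$ (with $F \in \chi \Longleftrightarrow c(F) \in c(\chi)$). Setting $F' := c(F)$, $\mathcal{S}' := c(\mathcal{S})$, $\chi' := c(\chi)$, the hypothesis $\mathcal{S} \cap D(F) = \emptyset$ becomes $\mathcal{S}' \cap U(F') = \emptyset$, and a quick check shows $c$ sends $U^*(F, \mathcal{S})$ bijectively onto $D^*(F', \mathcal{S}')$. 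Hence the probability in question equals $\prob[\chi' \cap D^*(F', \mathcal{S}') \neq \emptyset \mid F' \in \chi']$, and \Cref{downset} applied to $(F', \mathcal{S}')$ gives exactly the bound $39 |\mathcal{S}|\sqrt{n \ln n}/n$.

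If one instead wants a direct proof mirroring \Cref{downset}, the plan is as follows. Conditioned on $F \in \chi$, the sets of $\chi$ strictly above $F$ are determined by a uniformly random permutation $\pi$ of $[n] \setminus F$ via $A_i = \{\pi(1), \dots, \pi(i)\}$ and $D_i = F \cup A_i$. Since $F \in \wB_n$, any $D \in U^*(F,\mathcal{S})\cap \chi$ satisfies $|D|-|F| \le 4\sqrt{n \ln n}$, so only the first $O(\sqrt{n \ln n})$ steps of $\pi$ matter. I would then union bound over $S \in \mathcal{S}$, splitting into the cases $D \subseteq S$ and $S \subseteq D$.

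For the case $S \subseteq D$, the event requires all $x := |S \setminus F| \ge 1$ elements (nonempty since $S \not\subseteq F$ by hypothesis) to lie in the first $4\sqrt{n\ln n}$ positions of $\pi$, with probability at most $\binom{4\sqrt{n\ln n}}{x}\big/\binom{n - |F|}{x} \le \left(8\sqrt{n\ln n}/n\right)^x = O(\sqrt{n \ln n}/n)$. For the case $D \subseteq S$, we need $F \subseteq S$, which forces $|S \setminus F| \le 4\sqrt{n\ln n}$ (since $F, S \in \wB_n$), and then $\pi(1) \in S \setminus F$ suffices, yielding probability at most $4\sqrt{n\ln n}/(n-|F|) = O(\sqrt{n\ln n}/n)$. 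Summing over the two cases and then over $|\mathcal{S}|$ choices of $S$ yields the claimed bound after a careful tracking of constants (this is where the hypothesis $|\mathcal{S}| \le n/6$ lets one absorb lower-order error terms into the constant $39$).

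The main potential obstacle is the subcase $F \subseteq S$ in the ``$D\subseteq S$'' analysis, where naively many $D$'s with $F \subsetneq D \subseteq S$ might contribute; the key point that saves the argument is that the $\wB_n$ assumption on both $F$ and $S$ automatically forces $|S\setminus F| \le 4\sqrt{n\ln n}$, so the first element $\pi(1)$ already controls the whole event.
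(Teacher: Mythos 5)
Your proposal is correct, but note that the paper itself offers no proof of this statement: both \Cref{downset} and \Cref{upset} are imported verbatim from Boehnlein--Jiang \cite{BJ} (Lemmas 3.1 and 3.2 there), so there is no in-paper argument to compare against. Your first argument --- reducing \Cref{upset} to \Cref{downset} via the complementation involution $c(X)=[n]\setminus X$ --- is clean and checks out: $c$ preserves $\wB_n$, swaps $U(\cdot)$ and $D(\cdot)$, carries $\mathrm{Comp}(\mathcal{S})$ to $\mathrm{Comp}(c(\mathcal{S}))$, hence carries $U^*(F,\mathcal{S})$ onto $D^*(c(F),c(\mathcal{S}))$, and induces a bijection on $\C$ so that uniform full chains map to uniform full chains with $F\in\chi$ iff $c(F)\in c(\chi)$; this gives the bound with the identical constant and is the most economical route (it would even let the paper state only one of the two lemmas). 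Your direct sketch is also essentially the argument of \cite{BJ}: conditioning on $F\in\chi$, only the first $O(\sqrt{n\ln n})$ random augmentation steps can stay inside $\wB_n$, and the two comparability cases are handled correctly --- in particular you correctly identify that the hypothesis $\mathcal{S}\cap D(F)=\emptyset$ guarantees $S\setminus F\neq\emptyset$ in the case $S\subseteq D$, and that $F,S\in\wB_n$ forces $|S\setminus F|\le 4\sqrt{n\ln n}$ in the case $D\subseteq S$, which is exactly what keeps the union bound at $O(\sqrt{n\ln n}/n)$ per member of $\mathcal{S}$. The only thing left implicit is the exact bookkeeping producing the constant $39$, which you flag explicitly and which does not affect how the lemma is used here.
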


Let $\M$ be a family of $q$-marked chains. Let $i\in [q]$, $F\in \B_n$. Recall that for each $\chi\in \C$, we let
$\mathcal{M}(\chi, F,i)$ denote the set of all $(\chi, Q)\in \mathcal{M}$ such that $F$ is the $i$-th member of $Q$, and we let $\mathcal{M}(F,i)=\bigcup_{\chi}{\mathcal{M}(\chi, F,i)}$.

We say a family $\F \subseteq \B_n$ is {\it $\ell$-gapped} if for every $F, G \in \F$, with $F \subsetneq G$,  $|G - F| \geq \ell$. For $q \geq 1$, we say that a $q$-marked chain family $\M$ with markers from $\F$ is  $\ell$-gapped if $\F$ is $\ell$-gapped.

\begin{definition}\label{badv2}
    Fix $\gamma > 0$. Let $i,s\in [q]$ with $i <  s$.
We say that $F \in \mathcal{L}^i(\M)$ is $(i, s, \gamma)$-bad with respect to an $\ell$-gapped family $\M$, if there exist two families of sets $\mathcal{W}_1, \mathcal{W}_2 \subseteq \wB_n$ such that the following conditions hold. 
\begin{enumerate}
    \item $\mathcal{W}_1 \cap U(F) = \emptyset$ and $|\mathcal{W}_1| \leq |P|.$
    \item $|\mathcal{W}_2 | \leq \gamma n^{\ell (s - i) }.$
        \item For each $(\chi, Q)\in \M(F, i)$, either $Q\cap D^*(F, \mathcal{W}_1)\neq \emptyset$ or the $s$th member of $Q$ is in $\mathcal{W}_2$. 
\end{enumerate}

\end{definition}

\begin{definition}\label{badv2b}
    Similarly for $i,s\in [q]$ with $i >  s$, we say that $F \in \mathcal{L}^i(\M)$ is $(i, s, \gamma)$-bad with respect to an $\ell$-gapped family $\M$ if there exists two families of sets $\mathcal{W}_1, \mathcal{W}_2 \subseteq \wB_n$ such that the following conditions hold. 
\begin{enumerate}
     \item $\mathcal{W}_1 \cap D(F) = \emptyset$ and $|\mathcal{W}_1| \leq |P|.$
         \item $|\mathcal{W}_2 | \leq \gamma n^{\ell ( i - s)}.$
    \item For each $(\chi, Q)\in \M(F, i)$, either $Q\cap U^*(F, \mathcal{W}_1)\neq \emptyset$ or the $s$th member of $Q$ is in $\mathcal{W}_2$. 
\end{enumerate}

\end{definition}

This next lemma connects this newly defined notion of badness with the badness notion defined in the previous section. 

\begin{lemma}\label{lem:badequiv}
For every $\delta > 0$, and every $\ell, q$ positive integers, there exists $\gamma = \gamma(\delta, \ell, q)$ such that the following holds for every $i,s\in [q]$ such that $i\neq s$. 
    Let $\M$ be a $\ell$-gapped family of $q$-marked chains with markers from $\wB_n$ and let $F \in \mathcal{L}^i(\M)$. 
    Suppose $F$ is $(i, s, \gamma)$-bad with respect to $\M$ and that $n$ is sufficiently large. 
    \begin{itemize}
        \item If $i<s$, then $F$ is $(i, \delta)$-lower bad with respect to $\M$.
        \item If $i>s$, then $F$ is $(i, \delta)$-upper bad with respect to $\M$.
    \end{itemize}
\end{lemma}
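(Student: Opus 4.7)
The plan is to construct from the sets $\W_1,\W_2$ furnished by the $(i,s,\gamma)$-bad hypothesis an explicit lower (resp.\ upper) witness for $F$, and then bound its conditional measure using \Cref{downset} (resp.\ \Cref{upset}) together with the $\ell$-gap hypothesis. I focus on the case $i<s$; the case $i>s$ is entirely symmetric with $D$ replaced by $U$ throughout.

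A useful preprocessing step is to replace $\W_2$ by its intersection with the set of all $s$-th members of chains in $\M(F,i)$. This preserves condition~3 of \Cref{badv2}, does not increase $|\W_2|$, and ensures that every surviving $D\in \W_2$ lies in $D(F)$ and satisfies $|F|-|D|\geq \ell(s-i)$ by the $\ell$-gap hypothesis (since $F$ and $D$ are respectively the $i$-th and $s$-th members of some chain in the $\ell$-gapped family $\M$). I then take as my candidate witness
$$\W := D^*(F, \W_1) \cup \W_2.$$
Property~$(a)$ of $(i,\delta)$-lower badness is immediate: both pieces lie in $D(F)$. Property~$(b)$ follows from condition~3 of \Cref{badv2}: for every $(\chi,Q)\in \M(F,i)$, either $Q\cap D^*(F,\W_1)\neq \emptyset$ or the $s$-th member of $Q$ belongs to $\W_2$, so in either case $Q\cap \W\neq \emptyset$.

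The heart of the proof is verifying property~$(c)$. By the union bound,
$$\prob[\chi_0\cap \W\neq \emptyset \,|\, F\in \chi_0] \leq \prob[\chi_0\cap D^*(F,\W_1)\neq \emptyset \,|\, F\in \chi_0] + \sum_{D\in \W_2} \prob[D\in \chi_0 \,|\, F\in \chi_0].$$
For $n$ large we have $|\W_1|\leq |P|\leq n/6$, so \Cref{downset} bounds the first term by $39|P|\sqrt{\ln n/n}=o(1)$. For the second term, each $D\in \W_2$ satisfies $|F|-|D|\geq \ell(s-i)$, and since $\binom{|F|}{m}$ is increasing in $m$ on $m\leq |F|/2$ and $|F|=(1/2+o(1))n$ (because $F\in \wB_n$), we get
$$\prob[D\in \chi_0 \,|\, F\in \chi_0]=\binom{|F|}{|F|-|D|}^{-1}\leq \binom{|F|}{\ell(s-i)}^{-1}\leq \left(\frac{3\ell q}{n}\right)^{\ell(s-i)}$$
for $n$ large enough. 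Summing over the at most $\gamma n^{\ell(s-i)}$ sets in $\W_2$ yields a contribution of at most $\gamma(3\ell q)^{\ell q}$.

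Thus choosing $\gamma:=\delta/(2(3\ell q)^{\ell q})$ and requiring $n$ large enough that the first term is at most $\delta/2$ gives $\prob[\chi_0\cap \W\neq \emptyset \,|\, F\in \chi_0]\leq \delta$, so $\W$ is an $(i,\delta)$-lower witness for $F$. The case $i>s$ is handled identically with $\W:=U^*(F,\W_1)\cup \W_2$, \Cref{upset} in place of \Cref{downset}, and $n-|F|$ in place of $|F|$. The main subtle point is the use of the $\ell$-gap hypothesis to convert the polynomial upper bound $|\W_2|\leq \gamma n^{\ell|s-i|}$ into a Lubell-type measure bound of order $\gamma$ via the cancellation with $\binom{|F|}{\ell|s-i|}^{-1}\sim n^{-\ell|s-i|}$; without this cancellation the individual conditional probabilities could be $\Theta(1)$ and no such $\gamma$ would suffice.
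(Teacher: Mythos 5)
Your proof is correct and follows essentially the same route as the paper's: the paper also takes the witness $\W = \W_2 \cup D^*(F,\W_1)$ (after reducing $\W_2$ so that each of its members actually occupies the $s$-th position of some chain in $\M(F,i)$, which you achieve by intersection and the paper by minimality), bounds the $D^*(F,\W_1)$ term via Lemma~\ref{downset}, and uses the $\ell$-gap to get $\binom{|F|}{|F-D|}^{-1}\le (3\ell q/n)^{\ell(s-i)}$, leading to the identical choice $\gamma=\delta/(2(3\ell q)^{\ell q})$. No substantive differences.
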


\begin{proof}
    We will only prove the case $i<s$, the other case is analogous.  Because $F$ is $(i,s,\gamma)$-bad, there exist two sets $\mathcal{W}_1,\mathcal{W}_2$ satisfying the conditions in the definition and we will choose $\mathcal{W}_2$ to be minimal i.e. no $\mathcal{W}'_2 \subsetneq\mathcal{W}_2$ satisfies condition three with $\mathcal{W}_1$.  Observe that this minimality of $\mathcal{W}_2$ implies that every $D\in \mathcal{W}_2$ is in the $s$th position of some $q$ chain in $\M(F, i)$.
    
     We will show that 
    $F$ is $(i, \delta)$-lower bound with respect to $\M$ with $W:=\mathcal{W}_2 \cup D^*(F, \mathcal{W}_1)$ being an $(i,\delta)$-lower witness. Since $\M$ is $\ell$-gapped and for all $D\in \mathcal{W}_2$, $D$ is in the $s$th position of some $q$-chain in $\M(F, i)$,
    we have $|F-D|\geq \ell(s-i)$. Since $n$ is sufficiently large and $F,D\in \wB_n$, $|F-D|\leq n/6$.
    Thus for a uniformly chosen random full chain $\chi\in \C$ we have
    \begin{align*}
        \prob[\chi \cap \mathcal{W}_2 \neq \emptyset | F \in \chi] &\leq \sum_{D \in \mathcal{W}_2} \frac{1}{\binom{|F|}{|F - D|}} & \text{ by~(\ref{eqn:lubell})}\\
        &\leq |\mathcal{W}_2| \frac{1}{\binom{n / 3}{ \ell(s - i)}}\\
        &\leq \gamma n^{\ell (s - i)} (3\ell(s - i))^{\ell (s - i)} n^{ - \ell(s - i)}\\
        &\leq \gamma   (3\ell(s - i))^{\ell (s - i)}\\
         &\leq \frac{\delta}{2},
    \end{align*} by the choice of $\gamma = \frac{\delta}{2 (3 \ell q)^{\ell q}}$. 
    On the other hand by Proposition~\ref{downset}, 
    $$\prob[\chi \cap D^*(F, \mathcal{W}_1) \neq \emptyset | F \in \chi] \leq \frac{39 |P| \sqrt{n \ln n }}{n} \leq \frac{\delta}{2}.$$

    Thus, by the union bound, 
    $$\prob[ \chi \cap \W \neq \emptyset | F \in \chi] \leq \delta.$$
    This implies that $F$ is $(i, \delta)$-lower bad, because by definition, every $(\chi, Q)\in \M(F, i)$ intersects $W$. 
\end{proof}

\section{Embedding Tree Posets}\label{sec:embedding}
In this section we use the tools developed in the previous section to prove our two main supersaturation results: \Cref{thm:main} and a balanced supersaturation version of \Cref{cor:supersat} stated formally as \Cref{thm:balanced}.  Both of these proofs will follow essentially the same scheme for embedding a tree poset $P$  into a large family $\F$ one member at a time by using \Cref{thm:cleaning} to show that at every step, we always have many choices for how to embed the next member of $P$ even when we forbid some number of ``bad'' choices in $\F$.

\subsection{Embedding Process}
We establish the following general embedding result  \Cref{thm:genl} using \Cref{thm:cleaning}. It basically states that given some sufficiently sparse forbidden set $\Gamma\subseteq 2^{\F}$, we can grow many induced copies of $P$ not in $\Gamma$. For the non-balanced supersaturation we actually do no need to have such a forbidden set (i.e. $\Gamma = \emptyset$) but for the balanced supersaturation result, $\Gamma$ will be the upward closure of all the sets which are already contained in too many induced copies (i.e. saturated sets) of $P$ in the current collection.

%When we apply Theorem~\ref{thm:genl} with such $\Gamma$, we obtain a copy $P'$ of $P$ which is not in $\Gamma$, this on its turn implies no subset of $P'$ will be saturated, hence $P'$ can be added to our collection.

\begin{definition}\label{def:blockingfamily}

Given an $\ell$-gapped family $\F$, we say a collection $\Gamma \subseteq 2^{\F}$ is a \textit{$(\gamma,\ell,  \F)$-bounded family} if it satisfies the two properties below
\begin{enumerate}
    \item We have $\{F\}\notin \Gamma$ for all $F\in \F$.
    \item If $\D\not\in\Gamma$, then the number of $F \in \F$ such that $\{F\} \cup \D\in \Gamma$ is less than $\gamma n^{\ell}$. 
\end{enumerate}

\end{definition}
\begin{theorem}\label{thm:genl}
    Let $P$ be a tree poset of height $k$. Let $q \geq k$ and $\ell$ be positive integers and $\ve > 0$ then there exists a constant $\gamma = \gamma(\ve, q, \ell)$ such that the following holds for any $v \in P$ and for any $\rank:P\to [q]$ a poset homomorphism. Let $\F \subseteq \wB_n$ be an $\ell$-gapped family and has Lubell weight $\mu(F) \geq q - 1 + \ve$ and  $\Gamma$ a $(\frac{\gamma}{2}, \ell, \F)$- bounded family. Then, there is $F \in \F$ such that the number of injective induced homomorphisms $\varphi : P \rightarrow \F'$ satisfying $\varphi(P) \not \in \Gamma$ and $\varphi(v) = F$ is at least
 \[\left( \frac{\gamma}{2} \right)^{|P| - 1}\left(\prod_{xy\in H(P)}n^{\ell |\rank(y)-\rank(x)|} \right).\]

\end{theorem}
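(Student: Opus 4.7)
The plan is to prove the theorem via a greedy embedding that uses the nested marked chain families provided by \Cref{thm:cleaning}. First, set $\delta = \delta(q,\varepsilon)$ from \Cref{thm:cleaning} and define $\gamma := \gamma(\delta,\ell,q)$ as in \Cref{lem:badequiv}, so that $\gamma$ depends only on $\varepsilon, q, \ell$. Apply \Cref{thm:cleaning} to $\F$ to obtain nested families $\M^{|P|}\sub\M^{|P|-1}\sub\cdots\sub\M^{0}$, and order the vertices of $P$ as $v_1=v,v_2,\dots,v_{|P|}$ so that for each $j\ge 2$, $v_j$ has a unique Hasse neighbor $v_{p(j)}$ among $v_1,\dots,v_{j-1}$ (possible since $P$ is a tree). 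I will then choose any $F_1 \in \cL^{\rank(v_1)}(\M^{|P|})$, nonempty by \Cref{thm:cleaning}, and inductively build embeddings $\varphi_j$ satisfying $\varphi_j(v_i)\in\cL^{\rank(v_i)}(\M^{|P|-i+1})$ for every $i\le j$ and $\varphi_j(\{v_1,\dots,v_j\})\notin\Gamma$. The goal at each step is to show there are at least $\frac{\gamma}{2}n^{\ell|\rank(v_j)-\rank(v_{p(j)})|}$ valid choices for $F_j := \varphi_j(v_j)$; multiplying these over $j=2,\dots,|P|$ and taking $F := F_1$ then completes the proof.

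For the inductive step, let $F_p := \varphi_{j-1}(v_{p(j)})$, $i := \rank(v_{p(j)})$, $s := \rank(v_j)$, and assume $i<s$ (the case $i>s$ is symmetric, with $U^*$ in place of $D^*$). Set
\[\W_1 := \{\varphi_{j-1}(v_{i'}) : i'<j,\ v_{i'}\text{ is incomparable to }v_j\text{ in }P\},\qquad \W_2^{\Gamma} := \{F\in\F : \{F\}\cup\varphi_{j-1}(\{v_1,\dots,v_{j-1}\})\in\Gamma\}.\]
Property (2) of $\Gamma$ being $(\gamma/2,\ell,\F)$-bounded gives $|\W_2^{\Gamma}|<\frac{\gamma}{2}n^{\ell}$, and $|\W_1|\le|P|$. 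A key structural claim is that $\W_1\cap U(F_p)=\emptyset$: if some $F_{i'}\in\W_1$ satisfied $F_{i'}\supsetneq F_p$, the induced property of $\varphi_{j-1}$ would give $v_{i'}>_P v_{p(j)}$, and combined with $v_{p(j)}>_P v_j$ (the content of $i<s$ under the rank convention) this would force $v_{i'}>_P v_j$ by transitivity, contradicting incomparability.

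Because $p(j)<j$, we have $\M^{|P|-p(j)+1}\sub\M^{|P|-j+2}$, so $F_p$ lies in a marker of $\M^{|P|-j+2}$ and hence is $\delta$-robust with respect to $\M^{|P|-j+1}$ by \Cref{thm:cleaning}. \Cref{lem:badequiv} then guarantees that $F_p$ is not $(i,s,\gamma)$-bad with respect to $\M^{|P|-j+1}$. Let $\A$ be the set of $F\in\F\setminus\W_2^{\Gamma}$ that appear as the $s$th member of some $(\chi,Q)\in\M^{|P|-j+1}(F_p,i)$ with $Q\cap D^*(F_p,\W_1)=\emptyset$. If $|\A|<\frac{\gamma}{2}n^{\ell(s-i)}$, then setting $\W_2 := \A\cup\W_2^{\Gamma}$ gives $|\W_2|\le \gamma n^{\ell(s-i)}$ (using $s-i\ge 1$), and every $(\chi,Q)\in\M^{|P|-j+1}(F_p,i)$ satisfies condition~(3) of \Cref{badv2} for $(\W_1,\W_2)$, contradicting that $F_p$ is not $(i,s,\gamma)$-bad. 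Hence $|\A|\ge\frac{\gamma}{2}n^{\ell(s-i)}$; every $F_j\in\A$ avoids $\Gamma$, is incomparable to all members of $\W_1$ (since $F_j\notin\mathrm{Comp}(\W_1)$), and is automatically comparable in the correct direction to every previously embedded $F_{i'}$ with $v_{i'}$ comparable to $v_j$ (by transitivity through $F_p$), so $\varphi_{j-1}$ extends to the required $\varphi_j$. The main technical hurdles will be the index arithmetic aligning $\M^{|P|-j+1}$ with the inductive use of parental robustness, and the tree-poset argument showing $\W_1\cap U(F_p)=\emptyset$; both rest on $P$ being a tree.
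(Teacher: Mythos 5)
Your proposal is correct and follows essentially the same route as the paper's proof: the same greedy vertex-by-vertex embedding through the nested families from Theorem~\ref{thm:cleaning}, with the candidate set at each step shown to be large by contradiction via Lemma~\ref{lem:badequiv} and $\delta$-robustness of the parent. The only cosmetic differences are that you fold the $\Gamma$-forbidden elements directly into the witness $\W_2$ (the paper removes them after bounding $|\mathcal A|$) and describe $\W_1$ as the images of vertices incomparable to $v_j$ rather than of vertices $z\not\geq_P y$ — for a tree poset these sets coincide, as your path argument implicitly shows.
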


\begin{proof}
    Because $P$ is a tree poset, there exists an ordering $v_1, v_2, \dots v_{|P|}$ of the members of $P$ with $v_1 = v$ such that every member $v_j$ with $j\ge 2$ has exactly one neighbor $v_{j'}$ with $j'<j$ in the Hasse diagram of $P$, and we call this $v_{j'}$ the \textit{parent} of $v_j$.

Apply Theorem~\ref{thm:cleaning} to $\F$ with $q, \ve $ to find a $\delta = \delta(\ve, q) > 0$ and a nested sequence of $q$-marked chains $\M^{|P|}\sub \M^{|P|-1}\sub \cdots \sub \M^0$ with markers from $\F$ such that for all $j\in [|P|]$ we have that for each $(\chi,Q)\in \M^j$ and $F\in Q$, we have that $F$ is $\delta$-robust with respect to $\M^{j-1}$.

Fix $\gamma$ be obtained from Lemma~\ref{lem:badequiv} applied with $q, \ell, \delta$. 

Fix $F_0\in\mathcal{L}^{\rank(v_1)}(\M^{|P|})$ and define our initial embedding $\varphi^1(v_1):=F_0$. To define the final embedding, we iteratively extend $\varphi^j: \{ v_1, v_2, \dots ,  v_j \} \rightarrow \F $ to $\varphi^{j+1}$ maintaining the following properties.

    \begin{enumerate}
        \item[$(C_1)$:] Let $H_j(P) :=  H(P)[\{v_1, v_2, \ldots, v_j\}]$, that is the Hasse diagram induced by the first $j$ elements of $P$. For all edges $v_av_b$ in $H_j(P)$, there exists a $(\chi, Q)$ in $\mathcal{M}^{|P| - j}$ with $\varphi^{j}(v_a)$ in the $\rank(v_a)$th position of $Q$ and $\varphi^{j}(v_b)$ in the $\rank(v_b)$th position of $Q$. 
        \item[$(C_2)$:] For all noncomparable pairs $v_a, v_b \in \{v_1, \dots, v_j\}^2$,  $\varphi^j(v_a)$ and $\varphi^j(v_b)$ are not comparable in $\B_n$. 
        \item[$(C_3)$:] $\{ \varphi^j(v_1), \varphi^j(v_2), \dots \varphi^j(v_j)\} \not \in \Gamma$.
    \end{enumerate}

     Note that these properties are satisfied for $\varphi^1$,  since $(C_1), (C_2)$ both hold vacuously and $(C_3)$ holds because $F_0 \not \in \Gamma$ by \Cref{def:blockingfamily} part 1.
      
    Let us emphasize that the choice of $F_0$ and  $(C_1)$ ensures $\varphi^j(v_c) \in \mathcal{L}^{\rank(v_c)}(\M^{|P| - j})$ for all $v_c \in \{v_1, \dots v_j\}$. This will be useful later.

    Furthermore, $(C_1), (C_2)$ ensures the poset induced by $\varphi^{j + 1}(v_1), \varphi^{j }(v_2), \dots, \varphi^{j }(v_{j})$ has Hasse diagram isomorphic to $H_{j}(P)$. As two posets are isomorphic if and only if their Hasse diagrams are isomorphic, by the end of the process we will obtain an induced copy $P'$ of $P$. Let us now show that indeed these partial embeddings are possible to construct, and count how many choices we have at each step.

 Now let $j\geq 1$ and suppose $(C_1), (C_2),$ and $(C_3)$ hold for $\varphi^j$. We want to extend $\varphi^j$ to a partial embedding $\varphi^{ j + 1}$ so that $(C_1)-(C_3)$ hold for $\varphi^{ j + 1}$. 
 
 For notational convenience let $x:=v_{j  + 1}$, and let $y$ denote the unique parent of $x$  which has already been embedded. We wish to find at least one way of embedding $x$. For further convenience, we only consider the case $y >_P x$, the other case being analogous. This in particular implies $\rank(y)<\rank(x)$.

 Recall the formal definition of $D^*(F, \Sa)$ from Section~\ref{sec:tools} for  a member $F\in \F$ and a family $\Sa$ which is simply the set of all members $S\in \Sa$ which are downsets of $F$ and are comparable to some set of $\Sa$.  Furthermore, let $\widehat{\M}^{|P| - j - 1} := \M^{|P| - j - 1}(\varphi^j(y), \rank(y))$, that is the family of all $q$-marked chains in $\M^{|P| - j - 1}$ which have $\varphi^j(y)$ in their $\rank(y)$th position. Let $\mathcal{P}_j = \{ \varphi^j(z) : z \not \geq_P y , z \in \{ v_1, v_2, \dots, v_{j} \}\}$, that is the set of all images of currently embedded elements of $P$ which are either below $y$ or incomparable with $y$, and thus not cannot be comparable with $\varphi^{j + 1}(x)$ because in the embedding $\varphi^j$, $x$ has the unique neighbor $y$. 
 
 The following set  encodes the choices for $x$ that would preserve $\varphi^{j+1}$ being an induced homomorphism. Now let

$$\mathcal{A} =\mathcal{L}^{\rank(x)}(\widehat{\M}^{|P| - j - 1}) - D^*(\varphi^j(y), \mathcal{P}_j ),$$
i.e.\ this is the set of $F$ which are not in the forbidden neighborhood of $\mathcal{P}_j$ and which are in the $\rank(x)$th position of some $q$-marked chain of $\M^{|P| - j - 1}$ which has $\varphi^j(y)$ in the $\rank(y)$th position.

\begin{claim}\label{cl:largeA}
$$|\A| \geq \gamma n^{\ell (\rank(x) - \rank(y))}$$
\end{claim}
\begin{proof}[Proof of Claim~\ref{cl:largeA}] 
Suppose $|\mathcal{A}| < \gamma n^{\ell (\rank(x) - \rank(y))}$.
We derive a contradiction by showing this would imply $\varphi^j(y)$ is $(\rank(y), \rank(x), \gamma)$-bad (recall \Cref{badv2}) with
$\mathcal{W}_1=\mathcal{P}_j$ and $\mathcal{W}_2=\mathcal{A}$. This is a contradiction because Lemma~\ref{lem:badequiv} implies that $\varphi^j(y)$ is $(\rank(y), \delta)$-bad with respect to $\M^{|P| - j - 1}$. However, this cannot happen since  we constructed our marked chain families using \Cref{thm:cleaning} which guarantees that $\varphi^j(y)$ is $\delta$-robust with respect to $\M^{|P| - j - 1}$ (see Definition~\ref{deltarobust}).

 Note that by definition of $\mathcal{P}_j$ we have $\mathcal{P}_j\cap U(\varphi^j(y))=\emptyset$ and $|\mathcal{P}_j|\le |P|$, and by hypothesis we are assuming $|\mathcal{W}_2|\le \gamma n^{\ell(\rank(x) - \rank(y))}$. 
It thus remains to check the last condition in the definition of $(\rank(y),\rank(x),\gamma)$-badness, i.e.\ that for each $(\chi,Q)\in \widehat{M}^{|P| - j - 1}$, either $Q\cap D^*(\varphi^j(y),\mathcal{P}_j)\ne \emptyset$ or $Q$ contains a member from $\mathcal{A}$ in the $\rank(x)$th position.

Consider any $(\chi, Q)\in \widehat{\M}^{|P| - j - 1}$. If $Q\cap D^*(\varphi^j(y), \mathcal{P}_j) = \emptyset$, then the $\rank(x)$th member of $Q$ lies in $\mathcal{A}$ by definition of $\mathcal{A}$. Therefore, every $(\chi, Q) \in \widehat{\M}^{|P| - j - 1}$ has $Q$ either intersecting $D^*(\varphi^j(y), \mathcal{P}_j)$ or $\mathcal{A}$. 
\end{proof}

 Let $\A' = \A - \{F \in \F: \{F, \varphi^j(v_1), \varphi^{j}(v_2), \dots \varphi^{j}(v_j)\} \in \Gamma \}$. By $(C_3)$ and \Cref{def:blockingfamily}, $|\A'| \geq \frac{1}{2}  \gamma n^{\ell (\rank(x) - \rank(y))}$. Fix one such $F \in \A'$.  We claim that by embedding $x$ into any set in $\A'$, we will extend $\varphi^j$ to $\varphi^{j+1}$, satisfying all the desired properties $(C_1)-(C_3)$.  Indeed, define $\varphi^{j + 1}(v_{a}) = \varphi^{j} (v_{a})$ for all $a \in [j]$ and $\varphi^{ j + 1}(x) = F$. Let us now check that $\varphi^{j+1}$ satisfies $(C_1)-(C_3)$.

Since $\mathcal{M}^{ |P| - j} \subseteq \mathcal{M}^{|P| - j - 1}$, for all $a, b \in [j]$, $(C_1)$ and $(C_2)$ hold for all such pairs so we only need to check that these two conditions are satisfied for pairs of form $(x, v_a)$ for any $a\in [j]$.

By definition of $\A$, $\varphi^{j + 1}(x)$ is in the $\rank(x)$th position of some $(\chi, Q) \in \M^{|P| - j -1}$ which has $\varphi^{ j + 1}(y)$ in the $\rank(y)$th position.

 Let $a \in [j]$, and suppose $v_a$ is not comparable with $x$. We would like to show that $\varphi^{j+1}(v_a)$ and $F$ are incomparable. It is easy to see that $v_a \not \geq y$, so $\varphi^{j + 1}(v_a) = \varphi^j(v_a) \in \mathcal{P}_j$. Since $F \in D(\varphi^j(y))\setminus D^*(\varphi^j(y), \mathcal{P}_j)$ by our choice, $F$ is not comparable with any element in $\mathcal{P}_j$, in particular, $\varphi^{j + 1}(v_a)$.

To see $\varphi^{ j + 1}$ satisfies $(C_3)$, notice that $F$ was chosen so that $\{F, \varphi^j(v_1), \varphi^{j}(v_2), \dots \varphi^{j}(v_j)\} \not \in \Gamma$.

Finally, to check that the desired counting bound does hold, it is enough to observe that at every step while going from $\varphi^j$ to $\varphi^{j+1}$ starting at $j\geq 2$ we had at least  $\frac{1}{2}  \gamma n^{\ell (\rank(x) - \rank(y))}$ many choices to embed $x=v_{j+1}$, where  $y$ is the parent of $x$ among  already embedded vertices $v_1, v_2, \dots, v_{j}$.  Taking the product over all $j=2, \dots, |P|-1$ gives us the desired result.
\end{proof}

\subsection{Proof of Theorem~\ref{thm:main}}
We begin by proving an analogue of \Cref{thm:main} in the setting of Lubell weight and $\ell$-gapped families and deduce \Cref{thm:main} from it.  % We note that the most straightforward generalization of the statement of \Cref{thm:main} obtained by replacing the hypothesis $|\F|\ge (q-1+\ve){n\choose n/2}$ with $\mu(F)\ge q-1+\ve$ is not enough to conclude the existence of, say, $\Omega({n\choose n/2})$ copies of $P$ since $\F$ could consist of the first $q$ layers of $\wB_n$, in which case the number of copies of $P$ would be much smaller than the number of copies asked for by \Cref{thm:main}.  We get around this by assuming $\F$ has large Lubell weight in a ``robust way'', meaning that $\mu(\F)$ is large even if a not too large set of elements of $\F$ are removed.

\begin{theorem}\label{prop:embedding}
For every tree poset $P$ of height $k$, positive integers $q, \ell$  with $q \geq k$, and $\ve > 0$, there exists $\gamma=\gamma(\ve,q,\ell)$ such that the following holds for any $\rank : P \rightarrow [q]$ a poset homomorphism. Let $\F \subseteq \wB_n$ be an $\ell$-gapped family such that for all $\Sa \subseteq \F$ with $|\Sa| = N $ we have $\mu(\F - \Sa) \geq q - 1 + \ve$. Then, the number of induced copies of $P$ in $\F$ is at least
    \[\left(\frac{\gamma}{2}\right)^{|P| - 1}\left(\prod_{xy\in H(P)}n^{\ell |\rank(y)-\rank(x)|} \right) \cdot N.\]

\end{theorem}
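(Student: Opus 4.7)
The plan is to deduce the proposition from \Cref{thm:genl} by a simple iterative ``anchor-removal'' argument. Fix an arbitrary vertex $v \in P$ and let $\gamma$ be the constant supplied by \Cref{thm:genl} for the given parameters $\ve, q, \ell$. Take the forbidden family $\Gamma = \emptyset$, which is trivially a $(\gamma/2, \ell, \F')$-bounded family for any subfamily $\F' \subseteq \F$. Starting from $\F_1 := \F$, the plan is to construct a sequence of distinct anchor elements $F_1, F_2, \ldots, F_N \in \F$ by applying \Cref{thm:genl} to the shrinking family $\F_i := \F \setminus \{F_1, \ldots, F_{i-1}\}$ at each stage $i = 1, \ldots, N$.

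The ``robust'' Lubell-weight hypothesis --- that $\mu(\F - \Sa) \geq q - 1 + \ve$ for every $\Sa \subseteq \F$ with $|\Sa| = N$ --- is precisely what makes this iteration possible. Since Lubell weight can only decrease when elements are removed, this hypothesis implies $\mu(\F_i) \geq q - 1 + \ve$ for every $i \leq N$. As $\F_i$ inherits the $\ell$-gapped property from $\F$, \Cref{thm:genl} applies at stage $i$ and produces an anchor $F_i \in \F_i$ together with at least
\[ \left(\frac{\gamma}{2}\right)^{|P|-1} \prod_{xy \in H(P)} n^{\ell|\rank(y) - \rank(x)|} \]
injective induced homomorphisms $\varphi : P \to \F_i$ satisfying $\varphi(v) = F_i$.

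Since $\F_i \subseteq \F$, each such homomorphism is also an injective induced homomorphism into $\F$. The key observation is that homomorphisms arising from different stages are disjoint: those produced at stage $i$ all have $\varphi(v) = F_i$, and since the anchors $F_1, \ldots, F_N$ are distinct by construction, the batches of homomorphisms obtained across the $N$ stages do not overlap. Summing over $i = 1, \ldots, N$ thus yields at least $N$ times the displayed quantity of injective induced homomorphisms into $\F$; dividing by the constant $|\mathrm{Aut}(P)|$ to pass from homomorphisms to induced copies, and absorbing this fixed combinatorial factor into a slightly smaller $\gamma$, gives exactly the claimed bound.

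The argument is largely bookkeeping once \Cref{thm:genl} is available. The only substantive subtlety --- and the reason the strengthened Lubell-weight hypothesis is required in place of the weaker $\mu(\F) \geq q - 1 + \ve$ --- is that the hypothesis of \Cref{thm:genl} must remain valid after up to $N - 1$ deletions from $\F$, and this is exactly what the quantifier over all subsets $\Sa$ of size $N$ encodes. No genuine new embedding machinery is required beyond what \Cref{thm:genl} already supplies.
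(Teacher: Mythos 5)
Your proposal is correct and is essentially the paper's argument: both deduce the theorem from \Cref{thm:genl} with $\Gamma=\emptyset$, using the fact that the Lubell-weight hypothesis survives the deletion of up to $N$ elements to produce $N$ distinct anchors, each contributing the full per-anchor count of embeddings. The only difference is presentational — you run the extraction greedily and sum, whereas the paper phrases it as a contradiction by removing the set of over-used anchors all at once — and your closing remark about dividing by $|\mathrm{Aut}(P)|$ is a harmless constant-factor adjustment that the paper elides.
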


\begin{proof}

Let $v$ be any vertex of $P$ and $\gamma = \gamma(\ve, q, \ell)$ the real number returned by \Cref{thm:genl} applied with $\ve, q, \ell$. 

Assume on the contrary that $\F$ contains less than  \[\left( \frac{\gamma}{2} \right)^{|P| - 1}\left(\prod_{xy\in H(P)}n^{\ell |\rank(y)-\rank(x)|} \right)\cdot N,\]
   induced copies of $P$.

Let $\F_{\mathrm{b}}$ be the set of elements in $\F$ which are embedded as $v$ in more than $\left(\frac{\gamma}{2} \right)^{|P| - 1} \prod_{xy\in H(P)}n^{\ell |\rank(y)-\rank(x)|}$ induced copies of $P$. 

Since by assumption, we know that 
$$|\F_{\mathrm{b}} | \cdot\left( \frac{ \gamma}{2} \right)^{|P| - 1} \prod_{xy\in H(P)}n^{\ell |\rank(y)-\rank(x)|} \leq \left(\frac{\gamma}{2} \right)^{|P| - 1} \left(\prod_{xy\in H(P)}n^{\ell |\rank(y)-\rank(x)|} \right)\cdot  N, $$
$$\implies |\F_{\mathrm{b}}| \leq N.$$

Let $\F' = \F - \F_{\mathrm{b}}$. Then, $\mu(\F') \geq q - 1 + \ve$ by assumption. 

Now, apply \Cref{thm:genl} to $\F'$ with $\Gamma = \emptyset, q, \ell, \ve$. Then, there is $F \in \F$ playing the role of $v$ in at least \[\left(\frac{\gamma}{2}\right)^{|P| - 1} \prod_{xy\in H(P)}n^{\ell |\rank(y)-\rank(x)|},\] induced copies of $P$.  This contradicts the construction of $\F'$, and so $\F$ contains at least 

\[\left( \frac{\gamma}{2} \right)^{|P| - 1}\left(\prod_{xy\in H(P)}n^{\ell |\rank(y)-\rank(x)|} \right) \cdot N,\]
   induced copies of $P$, completing the proof of the theorem.

\end{proof}

Lastly, we will need the following upper bound on $M^*(n,q,P)$ in terms of all poset homomorphisms  $\rank:P\rightarrow [q]$. Recall that $M^*(n,q,P)$  is the number of copies of $P$ in the $q$ middle levels of $\B_n$.

\begin{lemma}\label{lem:rankandmiddle}
    If $P$ is a tree poset and $q$ is any positive integer, then 
    \[M^*(n,q,P)\le \sum_{\rank}\prod_{xy\in H(P)}n^{|\rank(x)-\rank(y)|}\cdot {n\choose n/2},\]
    where the sum ranges over all poset homomorphisms $\rank:P\to [q]$.
\end{lemma}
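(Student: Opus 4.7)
The plan is to classify induced copies of $P$ in the $q$ middle levels by the level assignment each induces, and for each such assignment to bound the number of realizing embeddings by a greedy tree traversal.

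First, I would label the $q$ middle levels by $[q]$ under the reverse of the usual order, so that larger sets get smaller rank. Any induced copy $\varphi : P \to \B_n$ with image in these levels then determines a function $\rank_\varphi : P \to [q]$ recording the level of $\varphi(x)$ for each $x \in P$. Since $\varphi$ is an induced homomorphism, $x <_P y$ forces $\varphi(x) \subsetneq \varphi(y)$, and in our reversed ordering this gives $\rank_\varphi(x) > \rank_\varphi(y)$, so $\rank_\varphi$ is a poset homomorphism from $P$ to $[q]$. Thus it suffices to bound, for each fixed poset homomorphism $\rank : P \to [q]$, the number of induced copies of $P$ realizing $\rank$ as its level assignment, and then to sum over all such $\rank$.

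Next, fix such a $\rank$. Because $P$ is a tree poset, I can order the vertices as $v_1, v_2, \dots, v_{|P|}$ so that each $v_{j+1}$ (for $j \ge 1$) has a unique parent $v_{p(j+1)} \in \{v_1, \dots, v_j\}$ in $H(P)$, and I would count embeddings by choosing $\varphi(v_1), \varphi(v_2), \dots$ in turn. There are at most $\binom{n}{\lfloor n/2 \rfloor}$ choices for $\varphi(v_1)$, since each level of $\B_n$ has size at most $\binom{n}{\lfloor n/2 \rfloor}$. For each $j \ge 1$, once $\varphi(v_{p(j+1)})$ has been chosen, the set $\varphi(v_{j+1})$ must lie in the level prescribed by $\rank(v_{j+1})$ and must be comparable to $\varphi(v_{p(j+1)})$ in the direction prescribed by $H(P)$. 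Writing $d := |\rank(v_{j+1}) - \rank(v_{p(j+1)})|$, this count is either the number of ways to add $d$ elements to $\varphi(v_{p(j+1)})$ or the number of ways to remove $d$ elements, and in either case is at most $\binom{n}{d} \le n^d$.

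Multiplying the choices across the tree yields
\[
\binom{n}{\lfloor n/2 \rfloor} \prod_{j=1}^{|P|-1} n^{|\rank(v_{j+1}) - \rank(v_{p(j+1)})|} \;=\; \binom{n}{\lfloor n/2 \rfloor} \prod_{xy \in H(P)} n^{|\rank(x) - \rank(y)|},
\]
since the tree ordering uses each edge of $H(P)$ exactly once. Summing over all poset homomorphisms $\rank : P \to [q]$ gives the desired inequality. I do not expect any substantive obstacle here: the argument reduces to the standard greedy tree embedding count, and the absolute value around $\rank(x) - \rank(y)$ uniformly handles both upward and downward edges of $H(P)$.
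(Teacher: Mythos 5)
Your proposal is correct and follows essentially the same route as the paper: classify copies by their level assignment (which is a poset homomorphism $\rank:P\to[q]$), then bound the copies of each type by a greedy tree-embedding count with $\binom{n}{\lfloor n/2\rfloor}$ choices for the root and at most $n^{|\rank(x)-\rank(y)|}$ choices per edge. No gaps.
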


\begin{proof}   
For ease of notation, let $\M_{n, q}$ be the middle $q$ layers of $\B_n$. Given a poset homomorphism $\rank:P\to [q]$, we say that a copy $P'$ of $P$ in $\M_{n,q}$ is of \emph{type-$r$} if for each $x\in P$, the member $F_x\in P'$ corresponding to $x$ lies in the $\rank(x)$th level of $\M_{n,q}$.  Observe that every copy of $P \in \M_{n,q}$ must be of type $\rank$ for some  $\rank$.  As such, to prove the result it will suffice to show that for all homomorphisms $\rank$, the number of induced copies of $P$ or type $\rank$ in $\M_{n,q}$ is at most
    \[\prod_{xy\in H(P)}n^{|\rank(x)-\rank(y)|}\cdot {n\choose n/2}.\]

    Let $x_1,\ldots,x_{|P|}$ be an ordering of $P$ such that for all $j\geq 2$ $x_j$ has a unique neighbour, called the \emph{parent} of $x_j$, among $x_1, \dots, x_{j-1}$ if we view $H(P)$ as an undirected graph. Note that such an ordering exists since $P$ is a tree poset.  We can identify the copies of $P$ in $\M_{n,q}$ of type-$\rank$ by tuples $(F_1,\ldots,F_{|P|})$ of members of $\M_{n,q}$ such that the map $f(x_i)=F_i$ defines a copy of $P$ of type $\rank$. So, it is enough to upper bound the number of such tuples.

    Since $F_1$ must be a member of the $\rank(x_1)$th layer of $\M_{n,q}$, the number of choices for $F_1$ is at most ${n\choose n/2}$.  Given that $F_1,\ldots,F_{j-1}$ have been selected, let $x_i$ be the parent of $x_j$.  If $x_i<x_j$, then $F_j$ must be a set containing $F_i$ together with $\rank(x_i)-\rank(x_j)$ additional elements from $[n]$, and the number of such sets is at most $n^{\rank(x_i)-\rank(x_j)}$.  Similarly if $x_j<x_i$ then the number of choices for $F_j$ is at most $n^{\rank(x_j)-\rank(x_i)}$.  Multiplying the number of choices for each step gives the total result, since each edge of $H(P)$ is counted exactly once by some $n^{|\rank(x_i)-\rank(x_j)|}$ term.
\end{proof}

With all of this established we can now complete the proof of our main supersaturation theorem. 

\begin{proof}[Proof of \Cref{thm:main}]  Using Chernoff bounds, it is standard~\cite{BJ, GL} to show that the number of sets $F\in \B_n$ with $||F|-n/2|>2\sqrt{n\ln{n}}$ is $o({n \choose n/2})$. Thus, we may assume $\F\sub \wB_n$.

Note that $\F$ is one-gapped and for every subfamily $\Sa$ of size $N=\frac{\ve}{2} \binom{n}{n / 2}$, we have that $\mu (\F - \Sa) \geq q - 1 + \frac{\ve}{2}$. Thus, applying \Cref{prop:embedding} with $\F, q, \ell = 1, \frac{\ve}{2}$ and taking the maximum over $\rank : P \rightarrow [q]$ we have that the number of induced copies of $P$ in $\F$ is at least
\[\Omega\left(\max_{\rank}\prod_{xy\in H(P)}n^{|\rank(y)-\rank(x)|}\cdot {n\choose n/2}\right)=\Omega\left(\sum_{\rank}\prod_{xy\in H(P)}n^{|\rank(y)-\rank(x)|}\cdot {n\choose n/2}\right)=\Omega\left(M^*(n,q,P)\right),\]
where the first equality holds since there are at most $|P|^q=O_{q,P}(1)$ possible poset homomorphisms $\rank:P\rightarrow [q]$, and the second does because of Lemma~\ref{lem:rankandmiddle}.
\end{proof}

\subsection{Balanced Supersaturation}
As mentioned above, a similar proof to that of \Cref{thm:main} can be used to improve \Cref{cor:supersat} to a balanced supersaturation result. To state this  formally, given a hypergraph $\mathcal{H}$ and a set of vertices $\D$, we define the degree $\deg_{\mathcal{H}}(\D)$ of $\D$ to be the number of edges of $\HH$ containing the set $\D$, and for an integer $j$ we define the maximum $j$-degree as
\[\Delta_{j}(\HH) := \max_{\D \subseteq V(\HH), |\D| = j} \deg_{\HH}(\D).\]
Given a collection $\HH$ of induced copies of a poset $P$ in a family $\F$, we can identify $\HH$ as a $|P|$-uniform hypergraph with vertex set $\F$ and with hyperedges consisting of sets of members of $\F$ which form an induced copy of $P$ in $\HH$.  With this we can now state our balanced supersaturation result, where here we recall that a family $\F$ is $\ell$-gapped if $|F-G|\ge \ell$ for all $F,G\in \F$ with $G\subsetneq F$.

\begin{theorem}\label{thm:balanced}
 For every tree poset $P$ with height $k$, real number $\ve > 0$, and integer $\ell$; there exists  $\delta = \delta (\ve, P, \ell)$ such that the following holds. Let  $n$ be sufficiently large and $\F \subseteq \widetilde{\mathcal{B}}_n$ satisfy $|\F| \geq (k - 1 + \ve) \binom{n}{ n / 2}$, and suppose $\F$ is $\ell$-gapped. Then there is a collection $\HH$ of induced copies of $P$ from $\F$ satisfying
    \begin{enumerate}
        \item $|\HH| \geq \delta^{|P|} n^{ \ell (|P| - 1) } \binom{n}{ n / 2},$
        \item $\Delta_{j}(\HH) \leq (\delta n^{\ell} )^{ |P| - j}$
    for all $1 \leq j \leq |P|$. \end{enumerate}
    
\end{theorem}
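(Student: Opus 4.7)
The plan is to construct $\HH$ greedily, repeatedly applying \Cref{thm:genl} and maintaining a forbidden family $\Gamma_t$ that encodes the balanced degree constraints. Let $\gamma = \gamma(\ve/2, k, \ell)$ be the constant from \Cref{thm:genl}, and fix $\delta = \delta(\ve, P, \ell) > 0$ small enough that $\delta \le \min\{\gamma/(2^{|P|+1}|P|),\, \ve/(4|P|)\}$. Starting from $\HH_0 = \emptyset$, at step $t$ define
\[\mathcal{S}_t = \bigl\{\D \subseteq \F : 2 \leq |\D| \leq |P|,\ \deg_{\HH_t}(\D) \geq (\delta n^\ell)^{|P|-|\D|}\bigr\},\]
let $\Gamma_t$ be the upward closure of $\mathcal{S}_t$ in $2^\F$, and set $\F_t = \F \setminus \F_t^{\mathrm{bad}}$ with $\F_t^{\mathrm{bad}} = \{F \in \F : \deg_{\HH_t}(F) \geq (\delta n^\ell)^{|P|-1}\}$. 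We then apply \Cref{thm:genl} to $\F_t$ with parameters $q = k$, $\ve/2$, $\ell$, and $\Gamma_t$ to obtain one induced copy $\varphi(P) \subseteq \F_t$ with $\varphi(P) \notin \Gamma_t$, and set $\HH_{t+1} = \HH_t \cup \{\varphi(P)\}$. With this setup, the degree bound~(2) is immediate throughout: a vertex-saturated $F$ lies in $\F_t^{\mathrm{bad}}$ and is excluded from every new edge, while a saturated $\D\in\mathcal{S}_t$ forces $\D \in \Gamma_t$ and is therefore avoided by $\varphi(P)$.

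The main step is verifying the two hypotheses of \Cref{thm:genl} throughout the process. First, $\Gamma_t$ should be $(\gamma/2, \ell, \F_t)$-bounded. Condition~(1) holds vacuously since $\mathcal{S}_t$ contains no singletons. For condition~(2), fix $\D \subseteq \F_t$ with $\D \notin \Gamma_t$ and $|\D|\leq |P|-1$ (which is the only relevant size in the inductive embedding of \Cref{thm:genl}); if $\{F\} \cup \D \in \Gamma_t$ then $F$ belongs to some saturated $\{F\} \cup \D''$ with $\D'' \subseteq \D$, $|\D''|\geq 1$. Since $\D''$ is itself not saturated (by $\D'' \subseteq \D \notin \Gamma_t$ when $|\D''|\geq 2$, or by $\D''\subseteq \F_t$ when $|\D''|=1$), the identity $\sum_F \deg_{\HH_t}(\{F\}\cup\D'') = (|P|-|\D''|)\deg_{\HH_t}(\D'')$ caps the number of such $F$ by $|P|\delta n^\ell$; summing over the at most $2^{|P|}$ choices of $\D''$ yields fewer than $\gamma n^\ell/2$ by the choice of $\delta$.

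Second, as long as $|\HH_t| \leq \delta^{|P|} n^{\ell(|P|-1)}\binom{n}{n/2}$, we must verify $\mu(\F_t) \geq k-1+\ve/2$. Summing over singletons, $|\F_t^{\mathrm{bad}}|\cdot (\delta n^\ell)^{|P|-1} \leq |P|\,|\HH_t|$, so $|\F_t^{\mathrm{bad}}| \leq \delta|P|\binom{n}{n/2} \leq (\ve/4)\binom{n}{n/2}$. Combined with $\binom{n}{|F|}\geq (1-o(1))\binom{n}{n/2}$ for all $F \in \wB_n$, this gives $\mu(\F_t^{\mathrm{bad}})\leq \ve/2$, hence $\mu(\F_t)\geq k-1+\ve/2$. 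Thus the greedy process can be iterated until $|\HH_t|$ reaches the target $\delta^{|P|} n^{\ell(|P|-1)}\binom{n}{n/2}$, at which point both conclusions of the theorem are satisfied. The main obstacle is the delicate calibration: $\delta$ must be small enough for the degree-sum computation to yield $(\gamma/2, \ell, \F_t)$-boundedness, yet large enough that the target on $|\HH|$ is a meaningful lower bound. A secondary subtlety, already hinted at above, is that condition~(2) of \Cref{def:blockingfamily} is only actually used in \Cref{thm:genl} for $\D$ of size at most $|P|-1$, which is precisely what our analysis provides.
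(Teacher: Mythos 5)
Your proposal is correct in substance and follows essentially the same route as the paper: the paper likewise proves the theorem by iterating a one-step extension lemma, defining saturated sets, taking $\Gamma$ to be the (upward-closed) family of inadmissible sets, discarding the vertex-saturated members of $\F$, and bounding the number of bad extensions $\Z(\K)$ by the same degree-sum double count before invoking \Cref{thm:genl}. The only quibble is that your saturation threshold should be $\lfloor(\delta n^{\ell})^{|P|-|\D|}\rfloor$ rather than $(\delta n^{\ell})^{|P|-|\D|}$ (as in the paper), since otherwise a set $\D$ sitting just below a non-integer threshold can be pushed one above it by the new edge and conclusion (2) then holds only up to an additive $1$.
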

Note that this in particular implies \Cref{cor:supersat} since every $\F$ is 1-gapped. 
\begin{lemma}\label{lem:indstep}
    For every tree poset $P$ with height $k$ and $\ve > 0$, there exists a $\delta = \delta (\ve, \ell, P)$ such that the following holds. Let $n$ is sufficiently large and $\F \subseteq \mathcal{B}_n$ satisfy $|\F| \geq (k - 1 + \ve) \binom{n}{ n / 2}$ and suppose $\F$ is $\ell$-gapped.  If $\HH$ is a collection of copies of $P$ from $\F$ satisfying
    \begin{enumerate}
        \item[(P1).] $|\HH| \leq \delta^{|P|} n^{\ell ( |P|- 1)} \binom{n}{ n / 2},$
        \item[(P2).] For all $1\le  j \le |P|$, $\Delta_{j}(\HH) \leq (\delta n^{\ell} )^{ |P| - j},$
    \end{enumerate}
    then there exists an induced copy $P'$ of $P$ not in $\HH$ such that $\HH' = \HH \cup \{ P'\}$ satisfies $$\Delta_{j}(\HH') \leq (\delta n^{\ell})^{ |P| - j}$$
    for all $1 \leq j \leq |P|$. 
\end{lemma}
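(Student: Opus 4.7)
The plan is to apply \Cref{thm:genl} with $q=k$, some fixed poset homomorphism $\rank:P\to [k]$, and a forbidden family $\Gamma$ that encodes the near-saturation of the current degree profile of $\HH$. Let $\gamma:=\gamma(\ve/2, k, \ell)$ be the constant returned by \Cref{thm:genl}, and choose $\delta>0$ small enough that $|P|\delta<\ve/2$ and $2^{|P|}|P|\delta<\gamma/2$. Call a subset $\D'\subseteq\F$ with $|\D'|=j$ \emph{saturated} if $\deg_\HH(\D')\ge (\delta n^\ell)^{|P|-j}$, and let $\Gamma$ consist of all $\D\subseteq\F$ that contain some saturated $j$-subset for some $j\in[|P|]$. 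Any induced copy $P'$ of $P$ satisfying $P'\notin\Gamma$ proves the lemma: no $j$-subset of $P'$ is saturated, so $\deg_{\HH\cup\{P'\}}(\D')\le (\delta n^\ell)^{|P|-j}$ for every $j$-subset $\D'\subseteq P'$; moreover, any member of $\HH$ is trivially saturated as a $|P|$-subset (degree at least $1=(\delta n^\ell)^0$), so $P'\notin\Gamma$ already forces $P'\notin\HH$.

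In order to invoke \Cref{thm:genl} we first pass to $\F':=\{F\in\F: \deg_\HH(\{F\})<(\delta n^\ell)^{|P|-1}\}$, which immediately gives $\{F\}\notin\Gamma$ for every $F\in\F'$, verifying Condition~(1) of \Cref{def:blockingfamily}. A Markov estimate combining the identity $\sum_{F\in\F}\deg_\HH(\{F\})=|P|\cdot|\HH|$ with hypothesis~(P1) shows that at most $|P|\delta\binom{n}{n/2}$ members are removed, so $|\F'|\ge(k-1+\ve/2)\binom{n}{n/2}$ and hence $\mu(\F')\ge k-1+\ve/2$; moreover $\F'$ remains $\ell$-gapped. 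Next, we verify Condition~(2) of \Cref{def:blockingfamily}. Fix a relevant $\D\notin\Gamma$ (only $|\D|\le|P|-1$ arises in the proof of \Cref{thm:genl}) and count $F\in\F'$ with $\{F\}\cup\D\in\Gamma$; any such $F$ must complete some $(j-1)$-subset $\D'''\subseteq\D$ into a saturated $j$-subset $\{F\}\cup\D'''$. The case $j=1$ contributes nothing by the definition of $\F'$, while for $j\ge 2$ the double-count estimate
\begin{equation*}
\sum_{F\in\F}\deg_\HH(\{F\}\cup\D''')\;\le\;(|P|-j+1)\deg_\HH(\D''')\;\le\;(|P|-j+1)(\delta n^\ell)^{|P|-j+1},
\end{equation*}
together with each saturated $F$ contributing at least $(\delta n^\ell)^{|P|-j}$ to the sum, bounds the count for fixed $\D'''$ by $(|P|-j+1)\delta n^\ell$. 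Summing over the at most $\binom{|P|-1}{j-1}$ choices of $\D'''$ for each $j\in\{2,\dots,|P|\}$ gives a total of at most $2^{|P|}|P|\delta n^\ell<(\gamma/2)n^\ell$, as required.

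Applying \Cref{thm:genl} to $\F'$ with parameters $\ve/2, k, \ell$ and forbidden family $\Gamma$ now yields at least one induced copy $P'$ of $P$ in $\F'$ with $P'\notin\Gamma$, completing the proof. The main obstacle is precisely the verification of Condition~(2): the crucial cancellation is that the degree-sum bound $\sum_F\deg_\HH(\{F\}\cup\D''')\le |P|(\delta n^\ell)^{|P|-j+1}$, divided by the saturation threshold $(\delta n^\ell)^{|P|-j}$, collapses to a single factor of $\delta n^\ell$, which is exactly what enables $\Gamma$ to be $(\gamma/2,\ell,\F')$-bounded once $\delta$ is chosen sufficiently small.
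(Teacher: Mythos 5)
Your proposal follows essentially the same route as the paper's proof: the same notion of saturated subsets, the same forbidden family $\Gamma$ of sets containing a saturated subset, the same pruning of $\F$ to discard the at most $\frac{\ve}{2}\binom{n}{n/2}$ members whose singletons are saturated, the same double-counting argument showing $\Gamma$ is $(\gamma/2,\ell,\F')$-bounded, and the same invocation of \Cref{thm:genl}. The only (easily repaired) blemish is your saturation threshold $\deg_\HH(\D')\ge(\delta n^\ell)^{|P|-j}$: when $(\delta n^\ell)^{|P|-j}$ is not an integer an unsaturated set can have degree $\lfloor(\delta n^\ell)^{|P|-j}\rfloor$, so adding $P'$ could push it to $\lfloor(\delta n^\ell)^{|P|-j}\rfloor+1>(\delta n^\ell)^{|P|-j}$; defining saturation via $\deg_\HH(\D')=\lfloor(\delta n^\ell)^{|P|-j}\rfloor$, as the paper does, removes this issue without changing anything else.
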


Observe that \Cref{thm:balanced} follows immediately from repeatedly applying Lemma~\ref{lem:indstep} until $|\HH| \geq \delta^{|P|} n^{\ell (|P| - 1)} \binom{n}{n /2}$, so it will suffice to prove this result.

\begin{proof}
    We say that $\D \subseteq \B_n$ is saturated if $1 \leq |\D| \leq |P|$ and $$\deg(\D) = \lfloor  (\delta n^{\ell} )^{ |P| - |\D|} \rfloor.$$
We say a subfamily $\K \subseteq \B_n$ with $|\K| \leq |P|$ is \textit{inadmissible}  if there exists a subfamily $\D \subseteq \K$ that is saturated. Otherwise, we say $\K$ is \textit{admissible}.

Observe that proving the lemma is equivalent to saying that there exists an admissible set $\K$ which forms an induced copy of $P$ which is not already in $\HH$.  To show this, we will start by removing from $\F$ any $F$ such that $\{F\}$ is saturated, as no such $F$ can ever be used in an admissible $\K$.  For this we observe the following. 
\begin{claim}\label{claim:few}
    If $\mathcal{F}_{\mathrm{sat}}\sub \F$ denotes the set  of $F\in \F$ such that $\{F\}$ is saturated, then $|\mathcal{F}_{\mathrm{sat}}|\le \frac{\ve}{2}{n\choose n/2}$.
\end{claim}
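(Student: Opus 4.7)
The plan is to argue by a straightforward double-counting of incidences between members of $\F$ and copies of $P$ in $\HH$. Each copy of $P$ in $\HH$ uses exactly $|P|$ members of $\F$, so summing singleton degrees gives
\[\sum_{F\in \F}\deg_{\HH}(\{F\})=|P|\cdot|\HH|.\]
By hypothesis (P1), the right hand side is at most $|P|\cdot \delta^{|P|}n^{\ell(|P|-1)}\binom{n}{n/2}$.

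Next, every $F\in \mathcal{F}_{\mathrm{sat}}$ contributes $\lfloor(\delta n^{\ell})^{|P|-1}\rfloor$ to the left hand side by definition of saturation, so
\[|\mathcal{F}_{\mathrm{sat}}|\cdot\lfloor(\delta n^{\ell})^{|P|-1}\rfloor\leq |P|\cdot \delta^{|P|}n^{\ell(|P|-1)}\binom{n}{n/2}.\]
Provided $n$ is large enough (so the floor loses only a negligible factor, say at most $2$), this rearranges to
\[|\mathcal{F}_{\mathrm{sat}}|\leq 2|P|\cdot \delta\cdot \binom{n}{n/2}.\]

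The only thing to check is that $\delta$, which was to be chosen in terms of $\ve$, $\ell$, and $P$, can be taken small enough that $2|P|\delta \leq \ve/2$. Since the constants $\ve$, $\ell$, $P$ are fixed before $\delta$ is chosen, we simply impose $\delta\leq \ve/(4|P|)$ as one of the constraints on $\delta$ in Lemma~\ref{lem:indstep}. There is no real obstacle here; the only minor care required is to make sure this constraint on $\delta$ is compatible with all the other constraints that will appear later in the proof of Lemma~\ref{lem:indstep} (in particular those coming from the application of Theorem~\ref{thm:genl} to the family $\F\setminus \mathcal{F}_{\mathrm{sat}}$, which by the claim still has Lubell weight at least $k-1+\ve/2$).
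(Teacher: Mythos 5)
Your proposal is correct and is essentially the same argument as the paper's: a double count of incidences between copies of $P$ in $\HH$ and saturated singletons, combined with (P1), the bound $\lfloor(\delta n^{\ell})^{|P|-1}\rfloor\ge \tfrac{1}{2}(\delta n^{\ell})^{|P|-1}$ for large $n$, and choosing $\delta$ small relative to $\ve$ and $|P|$. The only cosmetic difference is that the paper sums degrees only over $\mathcal{F}_{\mathrm{sat}}$ (getting an inequality $\le |P|\cdot|\HH|$) rather than over all of $\F$ with equality.
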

Here and throughout the proof we will make frequent use of the inequality $\lfloor(\delta n^\ell)^{|P|-i}\rfloor\ge \frac{1}{2} (\delta n^{\ell})^{|P|-i}$ for all $i\le |P|$, since $n$ is sufficiently large. 
\begin{proof}[Proof of \Cref{claim:few}]
    We first show that
    \[\lfloor (\delta n^{\ell})^{|P|-1}\rfloor \cdot |\mathcal{F}_{\mathrm{sat}}|\le \sum_{F\in \mathcal{F}_{\mathrm{sat}}} \deg(\{F\})\le |P|\cdot |\HH|.\]
    Indeed, the lower bound for the sum follows from the definition of what it means for $\{F\}$ to be saturated; the upper bound comes from the fact that the sum counts the number of pairs $(P',F)$ with $P'$ a copy of $P$ in $\HH$ and $F\in P'\cap \mathcal{F}_{\mathrm{sat}}$.

    Using the inequality above together with (P1) shows that
    \[\frac{1}{2}(\delta n^{\ell})^{|P|-1}\cdot |\mathcal{F}_{\mathrm{sat}}|\le  \lfloor (\delta n^{\ell})^{|P|-1}\rfloor \cdot |\mathcal{F}_{\mathrm{sat}}|\le |P|\cdot |\HH|\le |P| \cdot \delta^{|P|} n^{\ell (|P|-1)}{n\choose n/2},\]
    and rearranging gives the desired result, by choosing $\delta$ sufficiently small with respect to $\ve, P$.
\end{proof}
Define $\F':=\F-\mathcal{F}_{\mathrm{sat}}$. Note that 
\[|\F'|\ge (k-1+\ve/2){n\choose n/2}.\]

For every $\K\subseteq B_n$, let us define
$$\Z(\K) : = \{ F \in \F' : \{ F \} \cup \K \text{ is inadmissible}  \}.$$

 The intuition here is that if we have already partially built some set $\K$ to eventually be used in a copy of $P$, then $\Z(\K)$ represents the set of ``bad choices'' of $F$ that we could add to $\K$ to make it inadmissible.  A simple double counting argument shows that the number of such ``bad choices'' is relatively small.

\begin{claim}\label{lem:few-bad}
    For any admissible $\K \subseteq \B_n$ and $|\K| \leq |P|$, we have  $$|\Z(\K)| \leq 2^{|\K|} \cdot 2 \delta |P| n^{\ell}.$$  
\end{claim}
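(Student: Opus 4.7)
The plan is to reduce the problem to a double-counting argument on each nonempty subset $\D'\subseteq \K$. First, I would unpack what it means for $F\in \Z(\K)$: by definition, $\{F\}\cup\K$ is inadmissible, so it contains some saturated subset $\D$. Since $\K$ itself is admissible, $\D$ must contain $F$, so write $\D=\{F\}\cup\D'$ with $\D'\subseteq \K$. Moreover, because $F\in\F'=\F-\F_{\mathrm{sat}}$, the singleton $\{F\}$ is not saturated, so we must have $\D'\neq\emptyset$. Therefore
\[
\Z(\K) \;\subseteq\; \bigcup_{\substack{\D'\subseteq \K\\ \D'\neq\emptyset}} \bigl\{F\in\F'\,:\,\{F\}\cup \D' \text{ is saturated}\bigr\}.
\]
Since there are at most $2^{|\K|}-1$ nonempty subsets $\D'\subseteq\K$, it will suffice to show that for each such $\D'$ the set on the right has size at most $2\delta|P|n^{\ell}$.

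The second step is a double-counting using hypothesis (P2). Fix a nonempty $\D'\subseteq \K$ and write $d=|\D'|$. If $\{F\}\cup\D'$ is saturated, then
\[
\deg_{\HH}(\{F\}\cup\D') \;=\; \bigl\lfloor (\delta n^{\ell})^{|P|-1-d}\bigr\rfloor \;\geq\; \tfrac{1}{2}(\delta n^{\ell})^{|P|-1-d},
\]
using that $n$ is large (and handling $d=|P|-1$, where the floor equals $1\geq\tfrac12$, is fine). On the other hand, every induced copy $P'\in\HH$ that contains $\D'$ contains exactly $|P|-d$ other members of $\F$, so summing over $F\in\F\setminus\D'$ and counting incidences gives
\[
\sum_{F\in\F\setminus\D'} \deg_{\HH}(\{F\}\cup\D') \;=\; (|P|-d)\,\deg_{\HH}(\D') \;\leq\; |P|\cdot \Delta_{d}(\HH)\;\leq\; |P|\cdot (\delta n^{\ell})^{|P|-d},
\]
by hypothesis (P2). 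Dividing the upper bound by the lower bound yields at most
\[
\frac{|P|\,(\delta n^{\ell})^{|P|-d}}{\tfrac{1}{2}(\delta n^{\ell})^{|P|-1-d}} \;=\; 2|P|\delta n^{\ell}
\]
choices of $F$ for this particular $\D'$.

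Combining the two steps, summing over the at most $2^{|\K|}-1$ choices of $\D'$ gives
\[
|\Z(\K)| \;\leq\; (2^{|\K|}-1)\cdot 2|P|\delta n^{\ell} \;\leq\; 2^{|\K|}\cdot 2\delta |P| n^{\ell},
\]
which is the claim. There is no real obstacle here; the only subtlety is remembering to pass from $\F$ to $\F'$ so that the saturated subset $\D$ is guaranteed to have size at least $2$ (hence $\D'\neq\emptyset$), which is what lets us invoke (P2) on $\D'$ rather than on the empty set. Everything else is bookkeeping with the inequality $\lfloor(\delta n^{\ell})^{|P|-1-d}\rfloor\geq \tfrac12(\delta n^{\ell})^{|P|-1-d}$ valid for $n$ large.
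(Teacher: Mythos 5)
Your proof is correct and follows essentially the same route as the paper's: decompose $\Z(\K)$ over the nonempty subsets $\D'\subseteq\K$ (using that $F\in\F'$ is unsaturated to rule out $\D'=\emptyset$), then double-count incidences between copies of $P$ containing $\D'$ and their members to get $|\Sa(\D')|\le 2\delta|P|n^{\ell}$. The only cosmetic difference is that you bound $\deg_{\HH}(\D')$ directly via $\Delta_{d}(\HH)$ and hypothesis (P2), whereas the paper invokes the unsaturatedness of $\D'\subseteq\K$; both give the same bound $(\delta n^{\ell})^{|P|-d}$.
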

\begin{proof}[Proof of \Cref{lem:few-bad}]
For every subfamily $\D \subseteq \K$, let 
$$\Sa(\D) := \{ F \in \B_n :  \{ F\} \cup \D\text{ is saturated} \}. $$

Observe that since $\K$ is admissible, having $\{F\}\cup \K$ inadmissible implies that the subsets of $\{F\}\cup \K$ that are saturated must be of the form $\{F\}\cup \D$ for some $\D\sub \K$, i.e.\ we have $\Z(\K)=\bigcup_{\D\subseteq \K} \Sa(\D)$.  Moreover, because every $F \in \Z(\K)$$\sub \F'$ is unsaturated, we have that $\Sa(\emptyset)=\emptyset$.  In total, then we see that

\begin{equation} \label{bad-f}
\Z(\K)=\bigcup_{\D\subseteq \K, \D \neq \emptyset} \Sa(\D),
\end{equation} 
 and it now suffices to bound $|\Sa(\D)|$ for each  $\D \neq \emptyset$.

Fix some $\D\subseteq \K$ non-empty. Let $\mathcal{E}$  denote the set of pairs $(P',F)$ where $P'$ is a copy containing $\D$ and $F\in P'$ is arbitrary.
Let $\mathcal{E'} \subseteq \mathcal{E}$ be the set of tuples $(P', F) \in \mathcal{E}$ with the additional property that $F \in \Sa (\D)$. 
With this we see
\begin{equation}\sum_{F\in \Sa(\D)} \deg(\{F\}\cup \D)=|\mathcal{E}'|\le |\mathcal{E}|= |P|\deg(\D).\label{DF}\end{equation}

Since $\{F\}\cup \D$ is saturated for each $F\in \mathcal{S}(\D)$, we have $\deg(\{F\} \cup \D)=\lfloor (\delta n^{\ell})^{|P|-|\D|-1} \rfloor$. Since $\D\sub \K$ is unsaturated (because $\K$ is admissible), we have $\deg(\D)\leq (\delta n^{\ell})^{|P|-|\D|}$. Therefore, \eqref{DF} implies
$$|\Sa(\D)| \lfloor (\delta n^\ell)^{ |P| - |\D| - 1} \rfloor = \sum_{ F \in \Sa(\D)} \deg(\{F\} \cup \D) \leq |P| \deg(\D) \leq |P| (\delta n^{\ell})^{ |P| - |\D|} .$$

This implies

$$|\Sa(\D) |\leq 2 \delta |P| n^{\ell} .$$

This together with \eqref{bad-f} gives $|\Z(\K)|\leq 2^{|\K|} \cdot 2 \delta |P| n^{\ell}$ as desired.
\end{proof}

To complete the proof let $\gamma = \gamma(\ve, k, \ell)$ be derived from \Cref{thm:genl} applied with parameters $P, q = k$, $\ell,$ and $\frac{\ve}{2}$. We let $\Gamma$ be the set of all inadmissible sets. Note that by choosing $\delta$ sufficiently small with respect to $\gamma$, we may ensure that for all $\K$ admissible, $\Z(\K) \leq \frac{\gamma}{2} n^{\ell}$ by Claim~\ref{lem:few-bad}. This guarantees  $\Gamma$ to be $(\frac{\gamma}{2}, \ell, \F')$-bounded family. Now, applying \Cref{thm:genl} to $\F'$, $\Gamma,$ and any $v\in P$, we obtain an induced copy $P'$ of $P$ such that $\deg(\D) < \floor{(\delta n^{\ell} )^{|P| - |\D|}}$ for all $\D \subseteq P'$, and so $P' \not \in \HH$. Adding $P'$ to our collection $\HH$ preserves the desired maximum degree condition.
\end{proof}

\section{Using Balanced Supersaturation}\label{sec:containers}
In this section, we use our balanced supersaturation result \Cref{thm:balanced} together with the powerful method of hypergraph containers in order to prove \Cref{thm:counting} and \Cref{thm:randomturan}. In the next subsection, we prove our main container result \Cref{thm:major}, after which we use it together with standard arguments to conclude our main results.

\subsection{Hypergraph Containers}
 For a hypergraph $\HH$, we let $\I(\HH)$ be the set of independent sets of $\HH$. 

\begin{lemma}[Container Lemma \cite{BMS, ST}]\label{container}
For every $a \in \NN$ and $c > 0$ there exists a $\delta > 0$ such that the following holds.  Let $\tau \in (0, 1)$ and suppose $\HH$ is a $a$-uniform hypergraph on $N$ vertices such that $$\Delta_{b}(\HH) \leq c \tau^{b - 1}\frac{|\HH|}{N}.$$

for every $1 \leq b \leq a$.   Then there exists a family $\C$ of subsets of $V(\HH)$ and a function $f: 2^{V(\HH)} \rightarrow \C$ such that: 
\begin{enumerate}
\item For every $I \in \I(\HH)$, there is a $T(I) \subseteq I$ with $|T(I)| \leq a \cdot \tau N$ and $I \subseteq f(T(I)) \cup T(I)$. 
\item $|C| \leq (1 - \delta) N$ for every $C \in \C$. 
\end{enumerate}
\end{lemma}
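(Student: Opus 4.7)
The plan is to follow the now-standard algorithmic approach to the container lemma, developed independently by Saxton--Thomason and Balogh--Morris--Samotij and tracing back to Kleitman--Winston. The goal is to design a deterministic procedure that, given an independent set $I \in \I(\HH)$, extracts from $I$ a small \emph{fingerprint} $T(I) \subseteq I$, and then defines a \emph{container} $f(T(I))$ depending only on $T(I)$ (not on the full $I$) such that $I \subseteq T(I) \cup f(T(I))$ and $|f(T(I))| \leq (1-\delta)N$.

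First I would fix a canonical ordering of $V(\HH)$ (for instance by decreasing degree in the remaining hypergraph), and run the following iterative ``scythe'' process. Maintain an available set $A \subseteq V(\HH)$, initially equal to $V(\HH)$, and a current fingerprint $T$, initially empty. At each step, select the first available vertex $v$ under the ordering: if $v \in I$, append $v$ to $T$ and use the edges of $\HH$ containing $v$ together with already-selected elements of $T$ to identify vertices $u \in A$ whose joint inclusion with $T$ in an independent set would force too many edges; these vertices are removed from $A$. If $v \notin I$, simply remove $v$ from $A$. Terminate once $|T| = \lfloor a\tau N \rfloor$. Set $f(T) := A$ at termination.

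Property (1) is essentially immediate from the construction: every $v \in I$ is either appended to $T(I)$ or remains in $A$ throughout (since removals from $A$ only discard vertices certified to lie outside $I$), so $I \subseteq T(I) \cup f(T(I))$, and the stopping criterion forces $|T(I)| \leq a\tau N$. Moreover, because the deletion rule is determined entirely by $T$ and the fixed ordering, $f(T(I))$ indeed depends only on $T(I)$, so the map $f : 2^{V(\HH)} \to \C$ with $\C := \{f(T) : T \subseteq V(\HH)\}$ is well-defined.

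The hard part is establishing property (2), namely that $|f(T)| \leq (1-\delta)N$ for some $\delta = \delta(a,c) > 0$. The intuition is that each step of the algorithm discards a nontrivial fraction of $A$ on average, quantified by a potential function tracking codegrees. Concretely, one argues that either the currently selected vertex $v$ has large degree in $\HH[A]$, in which case the degree hypothesis $\Delta_b(\HH) \leq c\tau^{b-1}|\HH|/N$ (applied inductively on $b$) yields many codegree-based exclusions, or else the remaining hypergraph is already sparse enough that few ``live'' edges persist, forcing $|A|$ to drop. Balancing these two regimes against the stopping time $|T| = \lfloor a\tau N \rfloor$ produces the uniform lower bound of $\delta N$ on vertices removed, where $\delta$ depends only on $a$ and $c$. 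This inductive degree bookkeeping is the main technical obstacle, but since the lemma is black-boxed from \cite{BMS, ST} in what follows, the present paper merely invokes the conclusion.
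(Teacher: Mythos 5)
The paper does not prove this lemma at all: it is imported verbatim from Balogh--Morris--Samotij and Saxton--Thomason, so there is no internal proof to compare your attempt against. You correctly recognize this at the end of your write-up. Judged as a standalone proof, however, your proposal has a genuine gap. The construction of the scythe algorithm and the verification of property (1) (that $T(I)\subseteq I$, that $|T(I)|\le a\tau N$, that $I\subseteq T(I)\cup f(T(I))$, and that $f$ depends only on $T(I)$ and the fixed ordering) are indeed the easy, essentially bookkeeping parts of the argument. The entire content of the container lemma is property (2), and your treatment of it --- ``either the selected vertex has large degree, in which case codegree-based exclusions accumulate, or the hypergraph is sparse, forcing $|A|$ to drop; balancing these regimes gives $\delta N$'' --- is a description of what the proof must accomplish, not a proof. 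In particular, you never specify the deletion rule precisely, never define the potential function, and never carry out the induction on the uniformity $a$ (passing from $\HH$ to the link hypergraphs of fingerprint vertices, which is where the hypothesis $\Delta_b(\HH)\le c\tau^{b-1}|\HH|/N$ for \emph{all} $1\le b\le a$ is actually consumed). Without that induction one cannot extract a $\delta$ depending only on $a$ and $c$. There is also a subtle point your sketch elides: the vertices removed from $A$ at each step are not ``certified to lie outside $I$'' in general; the correct statement is that the algorithm's removals are consistent with $I\subseteq T\cup A$ because of how the max-degree selection interacts with the link construction, and making this precise is part of the work. Since the paper only invokes the lemma as a black box, none of this affects the paper; but as a proof of the lemma itself, the proposal is an outline of the known strategy with its central step missing.
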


Given a poset $P$, let $\G_P$ be the $|P|$-uniform hypergraph with vertex set $\B_n$, where a set is hyperedge if the poset the set induces is isomorphic to $P$. 

Fix a tree poset $P$ of height $k$. For subsets $\F\sub \B_n$ we define $$\tau(\F, k) : = \begin{cases}
    \frac{1}{n} & \text{if } |\F| < 3k \binom{n}{n / 2}\\
    \frac{1}{n^3} & \text{if } |\F| \geq 3k \binom{n}{n / 2}.
\end{cases}$$

We will use Theorem~\ref{thm:balanced} to prove the following result: 
\begin{corollary}\label{cor:impt}
Let $P$ be a tree poset of height $k$. Then for every $\ve > 0$, there exists $\delta = \delta(\ve, k) > 0$ such that the following holds.  Let $n \in \NN$ be sufficiently large and $\F \subseteq \B_n$ with $|\F| \geq (k - 1 + \ve) \binom{n}{ n / 2}$.  Then there exists a collection $\C \subseteq 2^{\F}$ and a function $f: 2^{\F} \rightarrow \C$  such that: 
\begin{enumerate}
\item For every set $I \in \I(\G_P[\F])$, there exists a $T$ with $|T(I)| \leq |P| \cdot \tau(\F, k)|\F|$ and $T(I) \subseteq I  \subseteq f(T(I)) \cup T(I)$. 
\item For every $C \in \C$,  $|C| \leq (1 - \delta ) |\F|$.  
\end{enumerate}
\end{corollary}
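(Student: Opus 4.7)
The plan is to construct a $|P|$-uniform hypergraph $\HH$ of induced copies of $P$ in $\F$ via Theorem~\ref{thm:balanced} and feed it into the container lemma (Lemma~\ref{container}). The two cases in the piecewise definition of $\tau(\F,k)$ will correspond to two different choices of the gap parameter $\ell$. As a preliminary, standard Chernoff bounds give $|\B_n \setminus \wB_n| = o(\binom{n}{n/2})$, so I may replace $\F$ by $\F \cap \wB_n$ and retain $|\F| \geq (k - 1 + \varepsilon/2)\binom{n}{n/2}$.

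In Case 1, where $|\F| < 3k\binom{n}{n/2}$ and $\tau = 1/n$, I would apply Theorem~\ref{thm:balanced} to the (trivially $1$-gapped) family $\F$ with $\ell = 1$, obtaining $\HH$ with $|\HH| \geq \delta_0^{|P|} n^{|P|-1}\binom{n}{n/2}$ and $\Delta_b(\HH) \leq (\delta_0 n)^{|P|-b}$, where $\delta_0 = \delta_0(\varepsilon, k)$ is the balanced supersaturation constant. Substituting into $\Delta_b \leq c\tau^{b-1}|\HH|/N$ with $N = |\F|$ and $\tau = 1/n$, the factors of $n^{|P|-b}$ cancel and the condition collapses algebraically to $|\F| \leq c\delta_0^b \binom{n}{n/2}$ for every $b \in [|P|]$; this is ensured by the case assumption together with $c := 3k/\delta_0^{|P|}$. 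Lemma~\ref{container} then delivers exactly the $\C$, $f$, and constant $\delta$ demanded by the corollary.

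In Case 2, where $|\F| \geq 3k\binom{n}{n/2}$ and $\tau = 1/n^3$, $\F$ need not be $3$-gapped, so I would partition $\F$ by the residue of $|F|$ modulo $3$ and keep the largest class $\F_0$, which is $3$-gapped with $|\F_0| \geq |\F|/3 \geq k\binom{n}{n/2}$. Applying Theorem~\ref{thm:balanced} to $\F_0$ with $\ell = 3$ gives $\HH$ on $\F_0$ with $|\HH| \geq \delta_0^{|P|} n^{3(|P|-1)}\binom{n}{n/2}$ and $\Delta_b \leq (\delta_0 n^3)^{|P|-b}$, and the same algebraic reduction shows the container degree hypothesis with $\tau = 1/n^3$ holds provided $|\F_0| = O(\binom{n}{n/2})$, so I would first trim $\F_0$ down accordingly. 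The $\F_0$-containers are then lifted to $\F$-containers by padding: set $T(I) := T_0(I \cap \F_0)$ and $f(T(I)) := f_0(T_0(I \cap \F_0)) \cup (\F \setminus \F_0)$, which automatically yields $I \subseteq f(T(I)) \cup T(I)$ and $|T(I)| \leq |P|\tau |\F_0| \leq |P|\tau|\F|$.

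The main obstacle will be securing the uniform container-size bound $|C| \leq (1-\delta)|\F|$ in Case 2: after padding, the container has size $|\F| - \delta_{\mathrm{cont}}|\F_0|$, which is $(1-\delta)|\F|$ only when $|\F_0|/|\F|$ is bounded below by a positive constant, whereas for $|\F|$ much larger than $\binom{n}{n/2}$ this ratio can be made arbitrarily small. The most promising fix is to strengthen balanced supersaturation so that $|\HH|$ scales linearly with $|\F|$ in Case 2, which would allow running the container lemma directly on $\F$ and avoid the padding step entirely. Theorem~\ref{prop:embedding} already produces an $N$-scaled count of induced copies when $\mu(\F - \Sa)$ remains large for all $\Sa$ of size $N$; since $\mu(\F)$ is of order $|\F|/\binom{n}{n/2}$ (up to the $\wB_n$ correction) in Case 2, rerunning the balanced supersaturation argument with $q \approx \mu(\F)$ and $N = \Theta(|\F|)$ should produce an $\HH$ whose size scales with $|\F|$, closing the gap.
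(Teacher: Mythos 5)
Your Case 1 matches the paper's argument essentially verbatim, and your diagnosis of the difficulty in Case 2 is exactly right: after trimming the $3$-gapped subfamily down to size $O\left(\binom{n}{n/2}\right)$ so that the container degree hypothesis can be met, padding the resulting containers with $\F\setminus\F_0$ destroys the bound $|C|\le(1-\delta)|\F|$, since $|\F|$ may be as large as $2^n$. So the proposal as written does not prove the corollary in Case 2, and the gap you flag is genuine. Your suggested repair --- rerunning the balanced supersaturation machinery with $q\approx\mu(\F)$ to make $|\HH|$ scale with $|\F|$ --- is only a sketch and is not obviously workable: \Cref{thm:balanced} and \Cref{lem:indstep} are calibrated to $q=k$, and simply raising $q$ does not by itself make the edge count of $\HH$ grow linearly in $|\F|$ while keeping the codegree bounds $\Delta_j(\HH)\le(\delta n^\ell)^{|P|-j}$.

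The paper's resolution is much more elementary and is the idea you are missing. When $|\F|\ge 3k\binom{n}{n/2}$, do not extract a single $3$-gapped subfamily of all of $\F$; instead, first split $\F$ arbitrarily into $t+1$ disjoint blocks $\F_0\cup\F_1\cup\cdots\cup\F_t$ with each $|\F_i|=3k\binom{n}{n/2}$ (and $|\F_0|<3k\binom{n}{n/2}$), then take a $3$-gapped residue class $\F_i'\subseteq\F_i$ of size at least $k\binom{n}{n/2}$ inside \emph{each} block and apply \Cref{thm:balanced} (with $\ve=\tfrac12$, $\ell=3$) to each $\F_i'$ separately. Setting $\HH=\bigcup_i\HH_i$, vertex-disjointness gives $\Delta_b(\HH)=\max_i\Delta_b(\HH_i)\le(\delta_2 n^3)^{3(|P|-b)/3}$ while the edge counts add, so $|\HH|\ge t\,\delta_2^{|P|}n^{3(|P|-1)}\binom{n}{n/2}=\Omega\bigl(|\F|\,n^{3(|P|-1)}\bigr)$ because $|\F|\le 6kt\binom{n}{n/2}$. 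This is precisely the linear-in-$|\F|$ scaling you wanted, obtained by aggregation rather than by strengthening the supersaturation theorem, and it lets one apply \Cref{container} once, directly to the hypergraph on the full vertex set $\F$ with $\tau=n^{-3}$ and $c=6k/\delta_2^{|P|}$, yielding containers of size at most $(1-\delta)|\F|$ with no padding step.
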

\begin{proof}
    If $(k - 1 + \ve) \binom{n}{n / 2} \leq |\F| \leq 3k \binom{n}{n /2}$, then we apply Theorem~\ref{thm:balanced} with $\ve$, $\ell = 1$, to find a $\delta_1 := \delta_1(\ve, P)$ and a subgraph $\HH \subseteq \G_P[\F]$ with properties \begin{enumerate}
        \item $|\HH| \geq \delta_1^{|P|} n^{|P| - 1}\binom{n}{ n / 2}$,
        \item $\Delta_j(\HH) \leq (\delta_1 n)^{|P| - j}$ for all $1 \leq j \leq |P| $.
    \end{enumerate} Note this satisfies the conditions of Lemma~\ref{container}, with $c = \frac{2}{\delta_1^{|P|}}$ and $\tau = \tau(\F, k) = \frac{1}{n}$. Applying Lemma~\ref{container} to $\HH$ then gives the result.

    If $|\F| \geq 3k \binom{n}{n / 2}$, then we arbitrarily split $\F$ into $t + 1$ families $\F = \F_0 \cup \F_1 \cup \dots \cup \F_t$ such that $|\F_0| < 3k \binom{n}{n / 2}$ and each $|\F_i| = 3k \binom{n}{ n / 2}$. For each $\F_i$ and $j \in \{0, 1, 2\}$, define $$\F_i^j := \{F \in \F_i : |F| \equiv j \mod 3\}.$$ Since the $\F_i^j$ partition $\F_i$, there exists some $j$ where $|\F_i^j| \geq k \binom{n}{n / 2}$. For this $j$, let  $\F_i' = \F_i^j$.

    Since $\F_i'$ is a $3$-gapped family, we may apply Theorem~\ref{thm:balanced} to $\F_i'$ with $\ve = \frac{1}{2}$, $\ell = 3$, to find a $\delta_2 : = \delta_2(\ve, P)$ a subgraph $\HH_i \subseteq \G_P[\F_i]$ with properties 
    \begin{enumerate}
        \item $|\HH| \geq \delta_2^{|P|} n^{3(|P| - 1)}\binom{n}{ n / 2}$, 
        \item $\Delta_j(\HH) \leq (\delta_2 n)^{3(|P| - j)}$ for all $1 \leq j \leq |P| $. 
    \end{enumerate} Consider now the collection $\HH = \cup_{i = 1}^t \HH_i \subseteq \G_P[\F]$, noting that because the $\HH_i$ are vertex disjoint we have $\Delta_\ell(\HH)=\max_i \Delta_\ell(\HH_i)$ for all $1 \leq b \leq |P|$, and hence \begin{align*}\Delta_b(\HH)= \max_j \Delta_{b}(\HH_j)= (\delta_2 n)^{3(|P|-b)} \leq \frac{6k}{\delta_2^{|P| }} n^{3(1-b)}\cdot \frac{t \delta_2^{|P| } n^{3(|P|-1)}{n\choose n/2}}{6k t{n\choose n/2}} \leq \frac{6k}{\delta_2^{|P|}} n^{3(1-b)}\cdot \frac{|\HH|}{|\F|},\end{align*}
    where this last step used that $|\F| \leq 3 k ( t+ 1) {n\choose n/2} \leq 6k t \binom{n}{n / 2}$ by how we defined $t$ and that $|\HH|=\sum |\HH_j| \geq t\cdot\delta_2^{|P|} n^{3(|P|-1)}{n\choose n/2}$.  With this we can apply Lemma~\ref{container} to $\HH$ with $c = \frac{6k}{\delta_2^{|P|}}$ and $\tau = \tau(\F, k) = \frac{1}{n^3}$.
\end{proof}

In this section, we follow directly the framework set up in \cite{CM} following \cite{BMS, ST}.

\begin{definition}
A fingerprint of $\G_P$ is a triple $(\mathcal{T},g,C)$ such that:
\begin{enumerate}
\item $\mathcal{T}$ is a collection of ``certificates'', which are vectors $T=(T_1,\ldots,T_m)$ of disjoint subsets of $V(\G_P)$.  For such a vector we let $\widehat{T}=\bigcup_i T_i$.

\item $g: \I(\G_P) \rightarrow \mathcal{T}$ is a ``fingerprint function'' which satisfies $\widehat{g(I)} \subseteq I$ for every $I \in \I(\G_P)$. 
\item $C: \mathcal{T} \rightarrow 2^{V(\G_P)}$ is a ``container function'' such that $I \subseteq C(g(I))$ for every $I \in \I(\G_P)$.  
\end{enumerate}
\end{definition}

We will now apply \Cref{cor:impt} iteratively to construct a fingerprint of ``small" size. This fingerprint will then be directly used to prove \Cref{thm:counting} and \Cref{thm:randomturan}. 

\begin{theorem}\label{thm:major}
For every tree poset $P$ of height $k \geq 2$ and every $\ve > 0$, there exists a constant $K = K(\ve, P) > 0$ and a fingerprint $(\mathcal{T}, g, C)$ of $\G_P$ such that if $n$ is sufficiently large, the following holds: 
\begin{enumerate}
\item[(a)] Every $T \in \mathcal{T}$ satisfies $|\widehat{T}| \leq \frac{K}{n} \binom{n}{n /2}$.
\item[(b)] The number of members $T\in \mathcal{T}$ with $|\widehat{T}|=s$ is at most 
$$\left( \frac{K \binom{n}{n /2}}{s} \right)^s \exp\left( \frac{K}{n} \binom{n}{n / 2}\right).$$
\item[(c)] $|C(g(I))| \leq (k - 1 + \ve) \binom{n}{ n / 2}$ for every $I \in \I(\G_P)$.  
\end{enumerate}
\end{theorem}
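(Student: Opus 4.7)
The plan is to construct the fingerprint by iteratively applying Corollary~\ref{cor:impt}, in the style of Saxton--Thomason and Balogh--Morris--Samotij as adapted in \cite{CM}. Fix $I \in \I(\G_P)$ and set $\F_0 := \B_n$. At step $i$, if $|\F_i| > (k-1+\ve)\binom{n}{n/2}$, apply Corollary~\ref{cor:impt} to obtain a certificate $T_i := T_i(I) \subseteq I \cap \F_i$ with $|T_i| \le |P|\tau(\F_i,k)|\F_i|$ and a deterministic container map $f_i$ such that $I \cap \F_i \subseteq f_i(T_i) \cup T_i =: \F_{i+1}$, which satisfies $|\F_{i+1}| \le (1-\delta)|\F_i| + |T_i|$. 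The process terminates at step $m$ once $|\F_m| \le (k-1+\ve)\binom{n}{n/2}$; set $g(I) := (T_1, \ldots, T_m)$ and $C(g(I)) := \F_m$. Property~(c) then holds by construction, and $\widehat{g(I)} \subseteq I$ since each $T_i \subseteq I$.

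For part (a), I would split the iteration into a \emph{large phase} (those $i$ with $|\F_i| \geq 3k\binom{n}{n/2}$, where $\tau(\F_i,k) = 1/n^3$) and a \emph{small phase} (those $i$ with $|\F_i| < 3k\binom{n}{n/2}$, where $\tau(\F_i,k) = 1/n$). In the large phase, $|T_i| \le |P||\F_i|/n^3$, and since $|\F_i|$ decreases geometrically from $2^n$, the total contribution to $|\widehat{T}|$ is $O(2^n/n^3) = o(\binom{n}{n/2}/n)$. In the small phase only a constant number of iterations suffice, because we only need a constant-factor reduction from $3k\binom{n}{n/2}$ down to $(k-1+\ve)\binom{n}{n/2}$; each such $|T_i|$ is at most $3k|P|\binom{n}{n/2}/n$, giving total contribution $O(\binom{n}{n/2}/n)$. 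Summing both phases yields the bound in (a) for some constant $K$.

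For (b), the crucial observation is that given the tuple $(T_1, \ldots, T_{i-1})$, the container $\F_i$ is determined deterministically by the container functions $f_j$, so the number of fingerprints with prescribed sizes $|T_i| = s_i$ is at most $\prod_i \binom{|\F_i|}{s_i}$. I would then separate this product according to whether $i$ lies in the large or small phase, writing $s = s^{(1)} + s^{(2)}$ for the respective total sizes. The small-phase product, since $|\F_i| \le 3k\binom{n}{n/2}$ throughout and the number of such steps is $O(1)$, can be bounded by $\binom{3k\binom{n}{n/2}}{s^{(2)}}$ via convexity (collapsing disjoint binomials from a bounded number of factors into one), giving a term of the form $(K\binom{n}{n/2}/s^{(2)})^{s^{(2)}} \le (K\binom{n}{n/2}/s)^s$. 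The large-phase product $\prod \binom{|\F_i|}{s_i}$, summed over all admissible tuples $(s_i)_{i \in \text{large}}$ with total $s^{(1)}$, can be bounded using $\binom{2^n}{\le t}$ with $t = O(2^n/n^3)$; the standard estimate $\binom{2^n}{t} \le (e2^n/t)^t$ applied here yields precisely $\exp(O(\binom{n}{n/2}/n))$, absorbing all large-phase contributions into the desired exponential factor.

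The main obstacle will be the careful counting in (b), in particular ensuring that the large-phase contribution stays within the $\exp(K\binom{n}{n/2}/n)$ factor despite there being $O(\log n)$ large-phase iterations rather than $O(1)$. The key is that each large-phase certificate $|T_i|$ is extremely small (of order $|\F_i|/n^3$), so that the \emph{cumulative} large-phase size is of order $\binom{n}{n/2}/n^{5/2}$, and the combinatorial count of subsets of $\B_n$ of this size is comfortably $\exp(O(\binom{n}{n/2}/n))$ since the extra logarithmic factor from $\log(2^n/t)$ is dominated by the $n^{3/2}$ slack in the size bound.
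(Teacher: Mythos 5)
Your proposal follows essentially the same route as the paper's proof: iterate Corollary~\ref{cor:impt} starting from $\B_n$, split the iterations at the threshold $3k\binom{n}{n/2}$ where $\tau$ changes, bound the cumulative certificate sizes phase by phase for (a), and count fingerprints with prescribed size vectors via products of binomial coefficients for (b), absorbing the large-phase contribution into the $\exp(K\binom{n}{n/2}/n)$ factor. The only cosmetic deviations are that you collapse the $O_{\ve,P}(1)$ small-phase binomials by convexity directly where the paper invokes the $x\log x$ estimate of Lemma~\ref{lem:xlogx}, and you keep each $T_i$ inside the next container rather than removing it and re-adjoining $T_1\cup\dots\cup T_m$ at the end (so one should prune overlaps to meet the disjointness requirement in the fingerprint definition); neither affects correctness.
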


To prove part (b) of this theorem, we will use the Lemma 4.3 from \cite{CM}, stated below.

\begin{lemma}\label{lem:xlogx}
Let $M > 0, s > 0$ and $0 < \delta  <1$.  For any sequence $(a_1, a_2,  \dots, a_m)$ of real numbers summing up to $s$ such that $1 \leq a_j \leq (1 - \delta)^j M$ for each $j \in [m]$, we have 
$$s \log s \leq \sum_{j = 1}^m a_j \log a_j + O(M)$$.  
\end{lemma}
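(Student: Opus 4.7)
The plan is to apply the log-sum inequality with comparison weights $b_j := (1-\delta)^j M$ tailored to match the upper bound in the hypothesis. Recall that for non-negative real numbers $(a_j)$ and positive reals $(b_j)$ with totals $s := \sum_j a_j$ and $B := \sum_j b_j$, the log-sum inequality (a restatement of the non-negativity of Kullback--Leibler divergence, proved via the convexity of $x \log x$) yields
\[
\sum_{j=1}^m a_j \log(a_j / b_j) \;\ge\; s \log(s / B).
\]

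First, I would expand the left-hand side using $\log b_j = \log M + j\log(1-\delta)$ and rearrange to isolate the quantity of interest:
\[
s \log s \;-\; \sum_{j=1}^m a_j \log a_j \;\le\; s \log(B/M) \;+\; \log\tfrac{1}{1-\delta} \cdot \sum_{j=1}^m j\, a_j.
\]
Here I use that $\sum_j a_j \log b_j = s \log M + \log(1-\delta)\sum_j j\,a_j$ and that $\log(1-\delta) = -\log(1/(1-\delta))$.

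Second, I would bound each of the two remainder terms by $O(M)$. For the first, the geometric series gives $B = \sum_{j\ge 1}(1-\delta)^j M = M(1-\delta)/\delta$, so $s \le B = O(M)$ while $\log(B/M) = \log((1-\delta)/\delta)$ is a constant depending only on $\delta$; hence $s \log(B/M) = O(M)$. For the second, the constraint $a_j \le b_j$ together with the standard arithmetic-geometric sum gives
\[
\sum_{j=1}^m j\, a_j \;\le\; \sum_{j\ge 1} j (1-\delta)^j \cdot M \;=\; \frac{M(1-\delta)}{\delta^2} \;=\; O(M),
\]
and the multiplicative prefactor $\log(1/(1-\delta))$ is again constant in $\delta$. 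Combining these two estimates completes the proof.

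I do not anticipate any serious obstacle. The only judgment call is the choice of comparison weights $b_j$ in the log-sum inequality; the geometric sequence $(1-\delta)^j M$ is the natural pick because it matches the upper-bound hypothesis exactly, which is what makes the residual terms converge as geometric/arithmetic-geometric series with $\delta$-dependent sums. Note that the lower bound $a_j \ge 1$ is never used in this approach, so the statement in fact holds with only $a_j \ge 0$.
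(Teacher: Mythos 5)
Your proof is correct. There is nothing in the paper to compare it against: \Cref{lem:xlogx} is imported without proof as Lemma~4.3 of \cite{CM}. Your derivation via the log-sum inequality with the geometric comparison weights $b_j=(1-\delta)^jM$ is a clean, self-contained substitute. The rearrangement using $\sum_j a_j\log b_j=s\log M+\log(1-\delta)\sum_j ja_j$ is right, and both residual terms are correctly handled: $s\le B\le M(1-\delta)/\delta$ gives $s\log(B/M)=O_\delta(M)$, and $\sum_j ja_j\le M\sum_{j\ge1}j(1-\delta)^j=M(1-\delta)/\delta^2$ gives the second term. The implied constant depends on $\delta$, which is consistent with how the lemma is applied in the proof of \Cref{thm:major}, where $\delta$ is fixed in terms of $\ve$ and $P$. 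Two cosmetic points: $B=\sum_{j=1}^m(1-\delta)^jM$ is only \emph{bounded above} by the infinite geometric sum, and $\log(B/M)$ may be negative; neither affects the upper bound you need. Your closing observation is also correct: with the convention $0\log 0=0$ the log-sum inequality needs only $a_j\ge 0$, so your argument dispenses with the hypothesis $a_j\ge 1$ that appears in the statement (and which a more hands-on entropy estimate would typically use to control $m$ in terms of $M$); in that sense your proof is marginally more general than what is required.
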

\begin{proof}[Proof of \Cref{thm:major}]
Let $\delta := \delta(\ve, P)$ be given by Corollary~\ref{cor:impt}, and choose $K$ large depending on $\ve, \delta, |P|, k$.  Let $n$ be sufficiently large. 

Fix some $I \in \I(\G_P)$, we will apply Corollary~\ref{cor:impt} a certain number of times, which we will denote by $m = m(I)$, to construct two sequences $\F_1, \F_2, \dots \F_{m + 1}$ and $T_1,  T_2,  \dots T_m$ of subsets of $V(\G_P)$. 

First set $\F_1 := \B_n$. Then apply \Cref{cor:impt} to $\F_1$ with $\frac{\ve}{2}$. This gives us a $T(I)$ of size less than $|P| \tau(\F_1, k) |\F_1|$ and a $f(T(I))$ of size less than $(1 - \delta)|\F_1|$ such that $I \subseteq T(I) \cup f(T(I))$. Let $T_1 = T(I)$ and $\F_2 = f(T(I)) - T(I)$. In general, as long as $|\F_i| \geq (k - 1 + \frac{\ve}{2})\binom{n}{ n / 2}$, apply \Cref{cor:impt} to $\F_i$ with $\frac{\ve}{2}$. This gives a $T(I)$ and a $f(T(I))$ such that $T(I) \subseteq \F_i \cap I$ and $\F_i \cap I \subseteq f(T(I)) \cup T(I)$. Let $T_i = T(I)$, and $\F_{i + 1} = f(T(I)) - T(I)$.

It is easy to see that above construction will inductively maintain the following properties for all $i$: 
\begin{enumerate}
\item[(i)] $I \subseteq \F_{i + 1} \cup T_1 \cup T_2 \cup \dots \cup T_i$,
\item[(ii)] $ \F_{i + 1}, T_1, \dots T_i$ are pariwise disjoint,
\item[(iii)] $\F_{i + 1}$ depends only on $\F_i$ and $T_i$, 
\item[(iv)] $|\F_{i + 1}| \leq (1 - \delta) |\F_i|$.
\end{enumerate}

We define our fingerprint $(\mathcal{T},  g,  C)$ of $\G_P$ by setting.

$$g(I) := (T_1,  T_2,  \dots T_m) \text{ and } C(g(I)) := \F_{m + 1} \cup T_1 \cup \dots \cup T_m, $$
and letting $\mathcal{T} := \{ g(I) : I \in \I(\G_P)\}$.  Note that property (iii) implies that $C$ is well defined, as the choice of $\F_{m + 1}$ does not depend on $I$, while property (i) guarantees that it is a container function.  Similarly (ii) together with how we constructed $T_i$ guarantees that $g$ is a fingerprint function.

In order to check that the constructed fingerprint satisfies the conditions of the theorem,  we first bound the sizes of the $T_i$'s, and then the number of iterations.

To begin,  let $2 \leq m_0 \leq m$ be minimum such that $|\F_{m_0}| \leq 3 k \binom{n}{n / 2}$ and observe that by property (iv) and definition of $\tau(\F, k)$: 

$$\tau(\F_i, k) |\F_i| \leq \begin{cases} n^{-3} \cdot 2^n & \text{if } i < m_0, \\
n^{-1} \cdot (1 - \delta)^{ i - m_0} 3k \binom{n}{n / 2} & \text{otherwise} . \end{cases}$$

Since $|\F_i|$ decays at a geometric rate by (iv),  we have that $m = O_{\ve, P}(\log n )$ and $m - m_0 = O_{\ve, P}(1)$.

We thus have the following 

\begin{equation}
\sum_{i = 1}^{m_0 - 1} \tau (\F_i, k) |\F_i| \leq \frac{m 2^n}{n^3} \ll \frac{1}{n^2} \binom{n}{n / 2} \text{ and } \sum_{i = m_0}^{m} \tau (\F_i, k) |\F_i| = \frac{O_{\ve, P}(1)}{n} \binom{n}{ n / 2}
\end{equation}

Thus by \Cref{cor:impt}, $|T_i| \leq |P| \tau (\F_i, k) |\F_i|$, and therefore
\begin{equation}\label{TauFBound}
\sum_{i = 1}^{m_0 - 1} |T_i| \leq \frac{m 2^n}{n^3} \ll \frac{1}{n^2} \binom{n}{n / 2} \text{ and } \sum_{i = m_0}^{m} |T_i| = \frac{O_{\ve, P}(1)}{n} \binom{n}{ n / 2}
\end{equation}

Since $|\widehat{g(I)}| =  \sum_{i = 1}^m |T_i|$,  we have the bound in (a). 

Similarly, $n$ is sufficiently large 
$$|C(g(I)) = |\F_{m + 1}| + |T_1 \cup \dots \cup T_m| \leq (k - 1 +  \ve) \binom{n}{n / 2},$$ proving part (c) of the theorem.  

It remains to prove (b). Recall that $\T$ is a collection of sequences $T = (T_1, T_2, \dots T_m)$ where $m$ can be arbitrary. We are looking to count the number $T \in \T$ such that $|\widehat{T}| = \sum_{i = 1}^{m} |T_i| = s$. To do this, we will partition $\mathcal{T}$ into subfamilies of form $\T(m_0, \mathbf{t})$ for all $m_0 \in \NN$ and $\mathbf{t} = (t_1, t_2, \dots, t_{m})$ in $\NN^m$. We collect in $\T(m_0, \mathbf{t})$ all $T = (T_1, T_2, \dots T_{m}) \in \T$ such that $|T_i| = t_i$ for all $i \in [m]$ and  $m_0$ is the minimum integer for which $|\F_{m_0}| \leq 3k \binom{n}{n /2}$. Notice that while $\F_{m_0}$ were produced for various $I$, a fixed $\F_{m_0}$ depends only on $T_1, T_2, \dots T_{m_0}$ by property (iii).

Let $s_1 = \sum_{i = m_0}^{m} t_i$,  and observe that Lemma~\ref{lem:xlogx} applied with $M = \frac{3k}{n} \binom{n}{ n / 2}$, $s = s_1$, and $\delta$ implies  

\begin{align*}
    &\sum_{i = m_0}^{m} t_i \log t_i \geq s_1 \log s_1 + \frac{O_{\ve, P}(1)}{n} \binom{n}{n  /2}  \implies \\
    &\sum_{i = m_0}^{m} t_i \log \frac{1}{t_i} \leq s_1 \log \frac{1}{s_1} + \frac{O_{\ve, P}(1)}{n} \binom{n}{n  /2}
\end{align*}
We also observe that by \eqref{TauFBound} we have that
\begin{equation}\sum_{i=1}^{m_0-1} t_i \ll \frac{1}{n^2} {n\choose n/2}.\label{eq:TFBound}\end{equation}
Using all this together with the observation that each $T_i$ is a subset of the corresponding $\F_i$ for all $i$, together with $|\F_i| \leq 2^n$ and the definition of $m_0$, we find that 
\begin{align*}
|\mathcal{T}(m_0,  \mathbf{t})| & \leq \prod_{i = 1}^{m_0 - 1} \binom{2^n}{t_i} \prod_{i = m_0}^{m} \binom{3k \binom{n }{n / 2}}{t_i}\\
&\leq \left(  \prod_{i = 1}^{m_0 - 1} 2^{t_i n} \right) \left( \left[ 3e k \binom{n}{n /2} \right]^{s_1} \prod_{i = m_0}^{m} \left( \frac{1}{t_i}\right)^{t_i}\right)\\
&= \exp\left( \log(2)n \sum_{i = 1}^{m_0 - 1} t_i+s_1\log\left( 3e k \binom{n}{n /2} \right)+\sum_{i = m_0}^{m} t_i \log \frac{1}{t_i}\right)\\
&\leq\exp\left(\frac{O_{\ve, P}(1)}{n} \binom{n}{n  /2} +s_1 \log\left(3e k \binom{n}{n /2} \right) +  s_1 \log \frac{1}{s_1} + \frac{O_{\ve, P}(1)}{n} \binom{n}{n  /2}  \right)\\
&=  \left( \frac{3ek }{s_1} \binom{n}{n / 2} \right)^{s_1} \exp \left(  \frac{O_{\ve, P}(1)}{n} \binom{n}{n  /2}  \right).
\end{align*}

\iffalse 
\begin{align*}
|\mathcal{T}(m_0,  \bar{t})| & \leq \prod_{i = 1}^{m_0 - 1} \binom{2^n}{\bar{t}_i} \prod_{i = m_0}^{m} \binom{3k \binom{n }{n / 2}}{t_i}\\
&\leq \left(  \prod_{i = 1}^{m_0 - 1} 2^{t_i n} \right) \left( \left[ 3e k \binom{n}{n /2} \right]^{s_1} \prod_{i = m_0}^{m} \left( \frac{1}{t_i}\right)^{\bar{t}_i}\right)\\
&= 2^{n \sum_{i = 1}^{m_0' - 1} t_i} \left[ 3e k \binom{n}{n /2} \right]^{s_1} \exp\left( \sum_{i = m_0'}^{m'} t_i \log \frac{1}{t_i}\right)\\
&\leq\left[ 3e k \binom{n}{n /2} \right]^{s_1}   \exp\left(\frac{1}{n} \binom{n}{n /2} +  s_1 \log \frac{1}{s_1} + \frac{O(1)}{n} \binom{n}{n  /2}  \right)\\
&= \exp\left( s_1 ( \log \frac{1}{s_1} + \log (3e k \binom{n}{n /2})) +  \frac{O(1)}{n} \binom{n}{n  /2}  \right)\\
&\leq  \left( \frac{3ek }{s_1} \binom{n}{n / 2} \right)^{s_1} \exp \left(  \frac{O(1)}{n} \binom{n}{n  /2}  \right)
\end{align*}
\fi 

Since this final expression is a monotone function in $s_1$ over the interval $(0,  \frac{1}{3ek} \binom{n}{n / 2})$,  we may replace $s_1$ with the larger value $s\le \frac{K}{n}{n\choose n/2}$ (with this bound for $s$ using (a)).  There are only $2^{O(n \log n)}$ choices for $\mathbf{t}$  and $O(\log (n))$ choices for both $m_0$ and $m$, so the claimed bound holds by summing the bound above over all possible $\mathcal{T}(m_0,\mathbf{t})$.  
\end{proof}

\subsection{Applications of Container Lemma}

We will now prove \Cref{thm:counting}. 

\begin{theorem}[Restatement of \Cref{thm:counting}]
    For every $\ve > 0$ and $P$ a tree poset of height $k$, there is an there exists an $n_0$ such that if $n \geq n_0$, then the number of induced $P$-free sets in $\B_n$ is at most 
    $$2^{(k - 1 + \ve)\binom{n}{n / 2}}.$$
\end{theorem}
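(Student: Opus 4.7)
My plan is to derive the counting result directly from the fingerprint structure provided by \Cref{thm:major}, using the standard container-style bookkeeping. The key observation is that every induced $P$-free family $I \subseteq \B_n$ is an independent set of the hypergraph $\G_P$, so by \Cref{thm:major} there is a fingerprint $T = g(I) \in \mathcal{T}$ with $I \subseteq C(T)$ and $|C(T)| \leq (k-1+\ve)\binom{n}{n/2}$. Therefore the number of induced $P$-free families is at most
\[
\sum_{T \in \mathcal{T}} 2^{|C(T)|} \leq 2^{(k-1+\ve)\binom{n}{n/2}} \cdot |\mathcal{T}|,
\]
and it suffices to show that $\log_2 |\mathcal{T}| = o\!\left(\binom{n}{n/2}\right)$, so that the $|\mathcal{T}|$ factor can be absorbed by replacing $\ve$ with $2\ve$.

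To bound $|\mathcal{T}|$, I would partition $\mathcal{T}$ according to the size $s := |\widehat T|$, which by \Cref{thm:major}(a) satisfies $s \leq s^* := \frac{K}{n}\binom{n}{n/2}$. By \Cref{thm:major}(b), the number of fingerprints of size $s$ is at most $\bigl(K\binom{n}{n/2}/s\bigr)^{s} \exp\!\bigl(\frac{K}{n}\binom{n}{n/2}\bigr)$. The function $s \mapsto s\log(K\binom{n}{n/2}/s)$ is increasing on $(0, K\binom{n}{n/2}/e)$, so since $s^* \ll K\binom{n}{n/2}/e$ it is maximized in our range at $s = s^*$, where it evaluates to $\frac{K}{n}\binom{n}{n/2}\log n$. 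Summing over the at most $s^*+1$ possible values of $s$ introduces only a polynomial factor in $\binom{n}{n/2}$, and the whole bound becomes
\[
|\mathcal{T}| \leq \exp\!\left( O_{\ve,P}\!\left(\frac{\log n}{n}\binom{n}{n/2}\right)\right),
\]
whose logarithm (base $2$) is $o\!\bigl(\binom{n}{n/2}\bigr)$ as desired.

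Combining these two bounds gives that the number of induced $P$-free families is at most $2^{(k-1+\ve)\binom{n}{n/2}} \cdot 2^{o(\binom{n}{n/2})} \leq 2^{(k-1+2\ve)\binom{n}{n/2}}$ for $n$ large, which is the conclusion of \Cref{thm:counting} after relabelling $\ve$. The main obstacle is really just the routine but careful computation showing that the fingerprint overhead is negligible; conceptually the heavy lifting has been done in \Cref{thm:major}, which in turn relied on the balanced supersaturation result \Cref{thm:balanced}. Since this deduction is the standard ``fingerprint-to-counting'' step used in container-based enumeration proofs (as in \cite{CM, BMS, ST}), I do not anticipate any essential difficulty beyond carrying out the estimate above.
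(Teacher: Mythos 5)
Your proposal is correct and follows essentially the same route as the paper: the paper's proof likewise applies \Cref{thm:major} (with $\ve/2$ in place of your relabelling trick), sums $2^{|C(T)|}$ over fingerprints stratified by $s=|\widehat{T}|$, and absorbs the fingerprint count $\bigl(K\binom{n}{n/2}/s\bigr)^s\exp\bigl(\tfrac{K}{n}\binom{n}{n/2}\bigr)\le n^{\frac{K}{n}\binom{n}{n/2}}\exp\bigl(\tfrac{K}{n}\binom{n}{n/2}\bigr)=2^{o(\binom{n}{n/2})}$ into the exponent. No substantive differences.
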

\begin{proof}
    Recall that $\mathcal{G}_P$ is the hypergraph on vertex set $\B_n$ where each edge corresponds to an induced copy of $P$, and hence the number of induced $P$-free subsets of $\B_n$ is exactly the number of independent sets of $\G_P$.
    
    Now, apply Theorem~\ref{thm:major} to $\mathcal{G}_P$ with $\ve / 2, P$ to find a constant $K = K(\ve, P) > 0$ and a fingerprint $(\mathcal{T}, g, C)$ of $\G_P$ such that the following holds if $n$ is sufficiently large: 
\begin{enumerate}
\item Every $T \in \mathcal{T}$ satisfies $|\widehat{T}| \leq \frac{K}{n} \binom{n}{n /2}$
\item The number of members of $\mathcal{T}$ of size $s$ is at most 
$$\left( \frac{K \binom{n}{n /2}}{s} \right)^s \exp\left( \frac{K}{n} \binom{n}{n / 2}\right)$$
\item $|C(g(I))| \leq (k - 1 + \frac{\ve}{2}) \binom{n}{ n / 2}$ for every $I \in \I(\G_P)$ 
\item Every $I \in \mathcal{I}(\G_P)$ satisfies $I \subseteq C(g(I))$. 
\end{enumerate}

Thus, the number of independent sets $I \in \mathcal{I}(\G_P)$ at most the sum over all $T \in \mathcal{T}$ the number of subsets of $C(T)$, and so we have the following bound:

\begin{align*}
    |\I(\G_P)| & \leq \sum_{s = 1}^{\frac{K}{n} \binom{n}{n / 2}} |\{T \in \mathcal{T} : |T| = s\} | 2^{(k - 1 + \frac{\ve}{2}) \binom{n}{ n / 2}}\\
    & \leq  \sum_{s = 1}^{\frac{K}{n} \binom{n}{n / 2}} \left( \frac{K \binom{n}{n /2}}{s} \right)^s \exp\left( \frac{K}{n} \binom{n}{n / 2}\right) 2^{(k - 1 + \frac{\ve}{2}) \binom{n}{ n / 2}}\\
    &\leq \frac{K}{n} \binom{n}{n / 2} n^{\frac{K}{n} \binom{n}{n / 2}}\exp\left( \frac{K}{n} \binom{n}{n / 2}\right)2^{(k - 1 + \frac{\ve}{2}) \binom{n}{ n / 2}}\\
    &\leq 2^{(k - 1 + \ve) \binom{n}{ n / 2}}, 
\end{align*}
as desired.
\end{proof}

\begin{theorem}[Restatement of \Cref{thm:randomturan}]
    Let $P$ be a tree poset of height $k$. Let $\Pa(n,  p)$ be the uniformly random subset of $\B_n$,  where each set survives with probability $p$ such that $pn \rightarrow \infty$. Then with high probability, the largest induced $P$-free subset of $\Pa(n, p)$ has size $(k - 1 + o(1))p\binom{n}{n / 2}$. 
\end{theorem}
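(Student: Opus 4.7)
The plan is to adapt the standard container-based strategy for random Turán problems to the poset setting, using the fingerprint structure provided by \Cref{thm:major}. I would prove matching upper and lower bounds on the size of the largest induced $P$-free subset of $\Pa(n,p)$.

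For the lower bound, let $U$ be the union of any $k-1$ consecutive middle layers of $\B_n$. Then $U$ is induced $P$-free: any induced copy of $P$ contains a chain of $k$ sets, which must occupy $k$ distinct layers. Since $k$ is fixed, each such middle layer has size $(1+o(1))\binom{n}{n/2}$, so $|U| = (k-1+o(1))\binom{n}{n/2}$. Because $pn\to\infty$ forces $p|U|\to\infty$, an application of Chernoff's inequality (\Cref{lem:chernoff}) shows that $|U \cap \Pa(n,p)| \geq (k-1-o(1))p\binom{n}{n/2}$ with high probability.

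For the upper bound, fix $\ve > 0$, apply \Cref{thm:major} with parameter $\ve/3$ to obtain a constant $K = K(\ve, P)$ and a fingerprint $(\mathcal{T}, g, C)$, and abbreviate $N = \binom{n}{n/2}$. Any induced $P$-free $I \subseteq \Pa(n,p)$ is independent in $\G_P$, and $T := g(I)$ satisfies $\widehat{T} \subseteq I \subseteq C(T)$, so in particular $\widehat{T} \subseteq \Pa(n,p)$. Hence it suffices to show that, with high probability, no $T \in \mathcal{T}$ simultaneously satisfies $\widehat{T} \subseteq \Pa(n,p)$ and $|C(T) \cap \Pa(n,p)| > (k-1+\ve)pN$. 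For a fixed $T$ with $|\widehat{T}| = s$, the first event occurs with probability $p^s$; conditional on it, $|(C(T) \setminus \widehat{T}) \cap \Pa(n,p)|$ is an independent $\mathrm{Bin}(|C(T)|-s, p)$ variable whose mean is at most $(k-1+\ve/3)pN$, so Chernoff (using $s \leq KN/n = o(pN)$, valid under $pn\to\infty$) bounds the conditional probability of the second event by $\exp(-c\,pN)$ for some $c = c(\ve, k) > 0$.

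The last step is a union bound over $T \in \mathcal{T}$, organized by $s = |\widehat{T}|$ and using part (b) of \Cref{thm:major} to count fingerprints. This yields a total probability bound of
\[
\exp\!\left(\frac{KN}{n} - c\,pN\right) \sum_{s=1}^{KN/n} \left(\frac{KNp}{s}\right)^{s}.
\]
The main step I expect to require care is verifying that this tends to zero, which is precisely where the hypothesis $pn \to \infty$ becomes essential. Since $pn \to \infty$ we eventually have $p > e/n$, so the function $s \mapsto (KNp/s)^s$ on $\{1,\dots,KN/n\}$ is maximized at the endpoint $s = KN/n$ with value $(pn)^{KN/n}$, making the sum at most $(KN/n)\cdot (pn)^{KN/n}$. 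Taking logarithms, the total log-bound simplifies to $O(\log N) + (KN/n)(1 + \log(pn)) - c\,pN$, which is $-\Omega(pN)$ as soon as $pn \gg \log(pn)$, and this is automatic once $pn \to \infty$. Combined with the lower bound this yields $|I| = (k-1 + o(1))pN$ with high probability, and the hypothesis is tight as indicated by the remark following the theorem.
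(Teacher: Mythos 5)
Your proof is correct and follows essentially the same route as the paper: an application of \Cref{thm:major}, a first-moment/union bound over fingerprints organized by $s=|\widehat{T}|$, a Chernoff bound for $|C(T)\cap\Pa(n,p)|$ conditional on $\widehat{T}\subseteq\Pa(n,p)$, and the final observation that $pn\gg\log(pn)$ makes the exponent negative. The only addition is your explicit lower-bound construction from $k-1$ consecutive middle layers, which the paper leaves implicit; that part is also correct.
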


\begin{proof}
Let $k \geq 2$ and $\ve > 0$ be arbitrary,  and let $K = K(\ve,  P) > 0$ and $(\mathcal{T},  g,  C)$ be the constant and fingerprint given by Theorem~\ref{thm:major} applied to $\mathcal{G}_P$ with $\ve, P$.  Let $n \in \NN$ be sufficiently large,  and note that we may assume $pn \geq K \ve^{ - 1}$ since $pn \rightarrow \infty$.  Let $\Pa(n,  p)$ be the uniformly random subset of $\B_n$,  where each set survives with probability $p$ .  Suppose $I \subseteq \Pa(n, p)$ is an induced $P$-free subset (or equivalently an independent set of $\G_P$) of size at least $(k - 1 + 3 \ve) p \binom{n}{ n / 2}$.   Then, it follows that $\widehat{g(I)} \subseteq \Pa(n, p)$ and 

$$|C(g(I)) \cap \Pa(n, p)| \geq (k - 1 + 3 \ve) p \binom{n}{n/2}.$$

Let $X$ be the number of elements of $\mathcal{T}$ for which these two properties  $\widehat{T}\sub \Pa(n,p)$ and $|C(T)\cap \Pa(n,p)|\ge (k-1+3\ve)p{n\choose n/2}$ hold.  Then, 

$$\EE(X) \leq \sum_{T \in \mathcal{T}} \prob(\widehat{T} \subseteq \Pa(n, p) )\cdot  \prob\left(|C(T)  \cap \Pa(n, p)-\widehat{T}| \geq (k - 1 + 2 \ve) p \binom{n}{n / 2}\right)$$

where we used that $|\widehat{T}| \leq 
\frac{K}{n} \binom{n}{ n / 2} \leq \ve p \binom{n}{n / 2}$ and $\widehat{T} \subseteq C(T)$ by the lower bound on $pn$ and Theorem~\ref{thm:major}.

Note, by \Cref{lem:chernoff}, applied with $\delta = \frac{\ve}{k - 1 + \ve}$ we obtain the following: 

\begin{align*}
    \prob\left(|C(T)  \cap \Pa(n, p)-\widehat{T}| \geq (k - 1 + 2 \ve) p \binom{n}{n / 2}\right) \leq 2 \exp\left( - \frac{\ve^2}{3k} p  \binom{n}{ n / 2}\right)
\end{align*}

Hence,  by properties of $(\mathcal{T}, g, C)$ guaranteed by Theorem~\ref{thm:major} and the above inequality, we have:

\begin{align*}
\EE(X) & \leq \sum_{s = 1}^{\frac{K}{n} \binom{n}{ n / 2}} \left( \frac{K \binom{n}{n /2}}{s} \right)^s \exp\left( \frac{K}{n} \binom{n}{n / 2}\right) \cdot p^s \cdot 2 \exp\left( - \frac{\ve^2p}{3 k} \binom{n}{ n / 2} \right) \\
& \leq \sum_{s = 1}^{\frac{K }{n} \binom{n}{ n / 2}} \left( \frac{K p \binom{n}{n /2}}{s} \right)^s \exp\left( \frac{K}{n} \binom{n}{n / 2}\right)  \cdot 2 \exp\left( - \frac{\ve^2p}{3 k} \binom{n}{ n / 2} \right) \\
& \leq \frac{2 K}{n} \binom{n}{ n / 2} \exp\left( \frac{K}{n} \binom{n}{n / 2} (\log (pn) + 1) - \frac{\ve^2p}{3 k} \binom{n}{ n / 2}  \right) 
\end{align*}

  Therefore by Markov's inequality and $pn \gg \log (pn) \gg 1$ we have that $$\prob\left( \alpha(\G_P[\Pa(n, p)]) \geq (k - 1 + 3 \ve) p \binom{n}{n /2} \right) \leq \exp \left( -  \frac{\ve^2 p }{6 k} \binom{n}{n / 2} \right)\rightarrow 0 $$ as $n \rightarrow \infty$, as required.

\end{proof}

\end{document}